\def\ov#1{{\overline{#1}}}
\def\wt#1{{\widetilde{#1}}}
\newcommand{\codim}{{\operatorname{codim}}}
\newcommand{\rank}{{\operatorname{rank}}}
\newcommand{\h}{{\operatorname{h}}}
\newcommand{\ord}{{\operatorname{ord}}}
\newcommand{\Aut}{{\operatorname{Aut}}}
\newcommand{\Hom}{{\operatorname{Hom}}}
\newcommand{\supp}{{\operatorname{supp}}}
\newcommand{\irr}{{\operatorname{irr}}}
\newcommand{\Qb}{\overline{\mathbb Q}}
\newcommand{\Gm}{{\mathbb G}_{\rm m}}
\newcommand{\id}{{\operatorname{id}}}
\newcommand{\im}{{\operatorname{im}}}
\newcommand{\MM}{{\rm M}}
\newcommand{\size}{{\rm size}}
\newcommand{\trans}{{\rm t}}
\newcommand{\exc}{{\rm exc}}
\newcommand{\OmegaW}{{\bigcup\Omega_{W}}}
\newcommand{\sat}{{\rm sat}}
\newcommand{\constant}{{c}}
\newcommand{\A}{{\mathbb{A}}}
\newcommand{\N}{{\mathbb{N}}} \renewcommand{\P}{{\mathbb{P}}}
\newcommand{\Q}{{\mathbb{Q}}} \newcommand{\R}{{\mathbb{R}}}
 \newcommand{\Z}{{\mathbb{Z}}}
\newcommand{\bfa}{{\boldsymbol{a}}}
\newcommand{\bfb}{{\boldsymbol{b}}}
\newcommand{\bfc}{{\boldsymbol{c}}}
\newcommand{\bfe}{{\boldsymbol{e}}}
\newcommand{\bff}{{\boldsymbol{f}}}
\newcommand{\bfx}{{\boldsymbol{x}}}
\newcommand{\bfy}{{\boldsymbol{y}}}
\newcommand{\bfE}{{\boldsymbol{E}}}
\newcommand{\bfF}{{\boldsymbol{F}}}
\newcommand{\bfG}{{\boldsymbol{G}}}
\newcommand{\bfP}{{\boldsymbol{P}}}
\newcommand{\bfQ}{{\boldsymbol{Q}}}
\newcommand{\bfeta}{{\boldsymbol{\eta}}}
\newcommand{\bfzeta}{{\boldsymbol{\zeta}}}
\newcommand{\bflambda}{{\boldsymbol{\lambda}}}
\newcommand{\bfxi}{{\boldsymbol{\xi}}}
\newcommand{\bfmu}{{\boldsymbol{\mu}}} 
\newcommand{\bfzero}{{\boldsymbol{0}}}
\newcounter{thm}
\numberwithin{equation}{section}
\numberwithin{thm}{section}
\theoremstyle{definition}
\newtheorem{definition}[thm]{Definition}
\newtheorem{notation}[thm]{Notation}
\newtheorem{remark}[thm]{Remark}
\newtheorem{example}[thm]{Example}
\theoremstyle{plain}
\newtheorem{lemma}[thm]{Lemma}
\newtheorem{proposition}[thm]{Proposition}
\newtheorem{theorem}[thm]{Theorem}
\newtheorem{corollary}[thm]{Corollary}
\newtheorem{conjecture}[thm]{Conjecture}
\newtheorem{problem}[thm]{Problem}
\newtheorem{prop-def}[thm]{Proposition-Definition}
\begin{document}

\selectlanguage{english}

\title[Systems of sparse polynomial equations]{Overdetermined systems of sparse polynomial equations}

\author[Amoroso]{Francesco Amoroso}
\address{Laboratoire de math\'ematiques Nicolas Oresme, CNRS 
UMR 6139, Universit\'e de Caen, BP 5186, 14032 Caen Cedex, France}
\email{francesco.amoroso@unicaen.fr}
\urladdr{\url{http://www.math.unicaen.fr/~amoroso/}}

\author[Leroux]{Louis Leroux}
\address{Laboratoire de math\'ematiques Nicolas Oresme, CNRS 
UMR 6139, Universit\'e de Caen,  BP 5186, 14032 Caen Cedex, France}
\email{louis.leroux@ac-caen.fr}
\urladdr{\url{http://www.math.unicaen.fr/~lleroux/}}

\author[Sombra]{Mart{\'\i}n~Sombra}
\address{ICREA \&
Departament d'{\`A}lgebra i Geometria, Universitat de Barcelona.
Gran Via~585, 08007 Barcelona, Spain}
\email{sombra@ub.edu}
\urladdr{\url{http://atlas.mat.ub.es/personals/sombra/}}

\date{\today} \subjclass[2010]{Primary 11Y16; Secondary 12Y05, 68W30.}
\keywords{Sparse polynomial, overdetermined system of equations, algebraic torus, unlikely
  intersections.}

\thanks{Amoroso and Leroux were partially supported by the CNRS
  research project PICS \og Properties of the heights of arithmetic
  varieties\fg{}. Amoroso was also partially supported by the ANR
  research project \og Hauteurs, modularit\'e,
  transcendance\fg{}. Sombra was partially supported by the MICINN
  research projects MTM2009-14163-C02-01 and MTM2012-38122-C03-02.}

\begin{abstract}
  We show that, for a system of univariate polynomials given in sparse
  encoding, we can compute a single polynomial defining the same zero
  set, in time quasi-linear in the logarithm of the degree. In
  particular, it is possible to determine whether such a system of
  polynomials does have a zero in time quasi-linear in the logarithm
  of the degree. The underlying algorithm relies on a result of
  Bombieri and Zannier on multiplicatively dependent points in
  subvarieties of an algebraic torus.

  We also present the following conditional partial extension to the
  higher dimensional setting. Assume that the effective Zilber
  conjecture holds. Then, for a system of multivariate
  polynomials given in sparse encoding, we can compute a finite
  collection of complete intersections outside hypersurfaces that
  defines the same zero set, in time quasi-linear in the logarithm of
  the degree.
\end{abstract}
\maketitle

\section{Introduction}

A system of polynomial equations
\begin{equation} \label{eq:18}
  f_{1}=\dots=f_{s}=0
\end{equation}
is \og overdetermined\fg{} if the number of equations exceeds the codimension of
its zero set. Our aim is to give algorithms for reducing those systems
of equations to a finite number of \og well-determined\fg{} systems.
We focus on the case when the input polynomials are sparse in the
sense that they have  high degree but relatively few nonzero
terms and small coefficients, and we want our algorithms to be as
efficient as possible in that situation.

\bigskip For univariate polynomials, the reduction of an
overdetermined system as in \eqref{eq:18} with $s\ge 2$, might be done by computing
the greatest common divisor of the $f_{i}$'s. However, this strategy
does not work in our situation because the gcd of a family of sparse
polynomials is not necessarily sparse, as shown by the following
example due to Schinzel~\cite{Schinzel:gcdtup}: if $a,b\ge 1$ are
coprime, then
\begin{displaymath}
  \gcd( x^{ab}-1, (x^{a}-1)(x^{b}-1)) =  \frac{(x^{a}-1)(x^{b}-1)}{x-1}.
\end{displaymath}
This polynomial has $2\min(a,b)$ nonzero terms. Hence, for $a,b\gg0$, both
$x^{ab}-1$ and $(x^{a}-1)(x^{b}-1)$ are sparse, but their gcd is
not. 

This example suggests that one should avoid polynomials vanishing at
roots of unity. Indeed, Filaseta, Granville and Schinzel have shown
that, if $f,g\in \Z[x]$ are given in sparse encoding and either
$f$ or $g$ do not vanish at any root of unity, then $\gcd(f,g)$ can be
computed with $\wt O(\log (d))$~{ops}
\cite{FilasetaGranvilleSchinzel:igcdasp}. Here, \emph{ops} is an
abbreviation of \og bit operations\fg{}, and the \og soft~O\fg{}
notation indicates a bound with an extra factor of type
$(\log\log(d))^{O(1)}$ and implicit constants depending on the  size
of the coefficients and number of nonzero terms of $f$ and
$g$.  Their algorithm relies heavily on a theorem of Bombieri and
Zannier on the intersection of a subvariety of the algebraic torus
with subtori of dimension~$1$~\cite[Appendix]{Schinzel:psrr}, see
also \S~\ref{sec:unlik-inters-tori}.

Our first result is an extension of the algorithm of Filaseta,
Granville and Schinzel, allowing the case when both $f$ and $g$ vanish
at roots of unity.  From now on, we fix a number field $K\subset
\Qb$. Given \begin{math} f_{1}, \dots, f_{s}\in K
  [x_{1},\dots,x_{n}]
\end{math}, we denote by $V(f_{1},\dots, f_{s})$ their set of common
zeros in the affine space $\A^{n}=\Qb^{n}$. Polynomials are given in
sparse encoding. Recall that the height of a polynomial is a measure
of the bit size of its coefficients, see \S~\ref{sec:integ-laur-polyn}
for details.  The following statement makes the output and the
complexity of our algorithm precise.

\begin{theorem} \label{thm:1} There is an algorithm that, given
  $f,g\in K[x]$, computes $p_{1},p_{2}\in K[x]$ such that
  \begin{displaymath}
    p_{1}|\gcd(f,g), \quad V(p_{1})\setminus \upmu_{\infty}=
    V(\gcd(f,g))\setminus \upmu_{\infty}, \quad  \text{ and }\quad  
V(p_{2})=V(\gcd(f,g)) \cap \upmu_{\infty}, 
  \end{displaymath}
where $\upmu_{\infty}$ denotes the subgroup of $\Qb^{\times}$ of roots
of unity. 

If both $f$ and $g$ have degree bounded by $ d$ and height and number of
nonzero coefficients bounded by $\constant$, this computation is done with
$\wt O(\log (d))$ ops, where the implicit constants depend only on $K$
and $\constant$.
\end{theorem}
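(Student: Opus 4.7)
The plan is to separate the common zeros of $f$ and $g$ into those lying in $\upmu_\infty$, which will be captured by $p_2$, and those not in $\upmu_\infty$, which will be captured by $p_1$. The first task is to identify the cyclotomic factors of $f$ and of $g$. The Bombieri--Zannier theorem mentioned in the introduction, in the univariate setting relevant here, yields an explicit bound, depending only on the number of nonzero terms of $f$, on the squarefree part of any integer $n$ with $\Phi_n\mid f$; combined with the constraint $\varphi(n)\le d$, this gives a list of $\wt O(\log d)$ candidate orders $n$. For each candidate we test $\Phi_n\mid f$ by evaluating $f$ at a primitive $n$-th root of unity modulo $\Phi_n$: since $f$ has $O(1)$ nonzero terms and bounded height, fast modular exponentiation performs this in $\wt O(\log d)$ ops. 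Repeating the procedure for $g$ and intersecting the two lists of orders yields the common cyclotomic factors, and we take $p_2$ to be their product.

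For $p_1$, write $\hat f=f/f_c$ and $\hat g=g/g_c$, where $f_c$ and $g_c$ are the cyclotomic parts computed above. By construction $\hat f$ and $\hat g$ are coprime to every cyclotomic polynomial, so the Filaseta--Granville--Schinzel algorithm applies to the pair $(\hat f,g)$ and returns $\gcd(\hat f,g)$ in $\wt O(\log d)$ ops. Setting $p_1=\gcd(\hat f,g)$ we have $p_1\mid \gcd(f,g)$, and since no root of unity is a zero of $\hat f$ the equality $V(p_1)\setminus\upmu_\infty=V(\gcd(f,g))\setminus\upmu_\infty$ holds, as required.

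The delicate point is that $\hat f$ need not be sparse in the monomial basis even when $f$ and $f_c$ are: dividing a sparse polynomial by a product of cyclotomic factors can produce many new nonzero coefficients, as one already sees with $(x^n-1)/\Phi_n$. One must therefore either establish a quantitative bound, depending only on $c$, on the number of nonzero coefficients of $\hat f$, which should follow from a suitable refinement of Bombieri--Zannier in the spirit of the unlikely intersection results cited in the paper, or else adapt the Filaseta--Granville--Schinzel procedure so that it accesses $\hat f$ only through an evaluation oracle computed from the sparse data of $f$ and the explicit product $f_c$, each query costing $\wt O(\log d)$ ops. I expect this sparsity/oracle step to be the main obstacle; once it is cleared, the remaining bookkeeping runs in $\wt O(\log d)$ ops with constants depending only on $K$ and $c$, as desired.
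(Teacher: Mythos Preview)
Your proposal has genuine gaps in both halves.

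\textbf{For $p_2$.} The claim that Bombieri--Zannier bounds the squarefree part of any $n$ with $\Phi_n\mid f$ is false: $f=x^d-1$ has two terms and $\Phi_d\mid f$, with squarefree part $d$ when $d$ is prime. Theorem~\ref{BZ} requires codimension $\ge 2$, so it says nothing about a single polynomial, and even applied to the pair $(f,g)$ its output is a bounded multiplicative relation among the coordinates of the image point in $\Gm^N$, not a bound on the order of the preimage in $\Gm$. In fact the orders $n$ with $\Phi_n\mid\gcd(f,g)$ cannot be listed in $\wt O(\log d)$ time: for $f=x^{2d}-1$, $g=x^d-1$ every divisor of $d$ appears. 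The paper's route is different: the coefficients of $f,g$ define a linear subvariety $W\subset\Gm^N$ independent of the exponents, the exponents define $\varphi\colon\Gm\to\Gm^N$ with $\varphi^{-1}(W)=V(f,g)$, and Laurent's theorem (Theorem~\ref{thm:2}) gives the $O(1)$ maximal torsion cosets of $W$. By Lemma~\ref{torsion-coset} each pulls back to a single binomial $x^b-\xi$ with $\xi$ of bounded order, and $p_2$ is the product of these $O(1)$ binomials (Algorithm~\ref{alg:28}).

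\textbf{For $p_1$.} You explicitly leave the sparsity of $\hat f=f/f_c$ unresolved, so the argument is incomplete as stated; and there is no evident refinement of Bombieri--Zannier that controls the number of terms of $\hat f$. The paper never divides out cyclotomic parts. Working with the same $W$ and $\varphi$, it applies Theorem~\ref{BZ} to the components of $V(\bfF)$ of codimension $\ge 2$, obtaining a finite list of candidate subtori of bounded degree; whenever one of them contains $\im(\varphi)$, Algorithm~\ref{alg:20} replaces $(\varphi,\bfF)$ by a pair in $\Gm^{N-1}$ without changing $\varphi^{\#}(\bfF)$. After at most $N=O(1)$ such reductions no candidate subtorus contains $\im(\varphi)$, and then Theorem~\ref{BZ} forces every non-torsion common zero to map into a codimension-$1$ component of $V(\bfF)$, hence to lie in $V(\varphi^{\#}(\gcd(F_1,\dots,F_s)))$; this pullback is the desired $p_1$ (Algorithm~\ref{alg:27}). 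Since the $F_i$ live in a space of bounded dimension with bounded data, their gcd and its pullback are sparse and the whole procedure costs $O(\log d)$ ops.
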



This result is a simplified version of Theorem \ref{gcd}, which holds
for families of univariate polynomials and gives more information about
the output polynomials. The underlying procedure is given by
Algorithms \ref{alg:27} and \ref{alg:28}.
A preliminary version appears in the second author's
Ph.D. thesis~\cite{Leroux:apl}.

In the notation of Theorem \ref{thm:1},  we have that
  \begin{equation}\label{eq:1}
    V(p_{1}p_{2})=V(f,g).
  \end{equation}
  Hence, given two univariate polynomials with bounded height and
  number of nonzero coefficients, we can compute a polynomial which
  has the same zero set as their gcd, with complexity quasi-linear in the
  logarithm of the degree. In particular, we deduce the following
  corollary. 

\begin{corollary} \label{cor:1} Let $f,g\in K[x]$ be
  polynomials of degree~$\leq d$ and  height and number of nonzero coefficients
  bounded by a constant $\constant$.   We can decide if
  \begin{displaymath}
\gcd(f,g)=1
  \end{displaymath}
  with $\wt O(\log (d))$ ops, where the implicit constants depend only
  on $K$ and $\constant$.
\end{corollary}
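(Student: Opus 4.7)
The plan is to reduce the decision problem directly to Theorem~\ref{thm:1}. We run the algorithm of that theorem on the pair $(f,g)$ to compute, in $\wt O(\log (d))$ ops, polynomials $p_{1}, p_{2}\in K[x]$ satisfying the three conditions in its statement. By equation~\eqref{eq:1} we then have $V(p_{1}p_{2})=V(f,g)$, so the question $\gcd(f,g)=1$ is equivalent, over the algebraically closed field $\Qb$, to the emptiness of $V(p_{1}p_{2})$.

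Next, I observe that testing $V(p_{1}p_{2})=\emptyset$ on the output is essentially free. Since $p_{1}\mid \gcd(f,g)$, the polynomial $p_{1}$ is nonzero (we may discard at the outset the trivial case $f=g=0$); and $p_{2}$ is nonzero by the form of the algorithm's output. Hence $V(p_{1}p_{2})=\emptyset$ is equivalent to $\deg(p_{1})=\deg(p_{2})=0$, which can be read off in constant time from the sparse encoding of the output. Combining this with Theorem~\ref{thm:1}, we decide whether $\gcd(f,g)=1$ in $\wt O(\log (d))$ ops, with implicit constants depending only on $K$ and $\constant$.

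There is essentially no obstacle once Theorem~\ref{thm:1} is granted; the only small point to verify is that the two blocks of the output do carry all the information needed, namely that $V(p_{1})\cup V(p_{2})$ recovers $V(f,g)$ entirely (the non-torsion part via $p_{1}$ and the torsion part via $p_{2}$), which is exactly the content of~\eqref{eq:1}. The corollary therefore follows as an immediate consequence of Theorem~\ref{thm:1}.
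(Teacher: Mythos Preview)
Your argument is correct and matches the paper's approach: the corollary is stated as an immediate consequence of Theorem~\ref{thm:1} via equation~\eqref{eq:1}, and you have simply spelled out the easy verification that $V(p_{1}p_{2})=\emptyset$ can be read off from the degrees of $p_{1}$ and $p_{2}$. The only cosmetic point is that the nonvanishing of $p_{2}$ already follows from the condition $V(p_{2})=V(\gcd(f,g))\cap\upmu_{\infty}$ in Theorem~\ref{thm:1} (a nonzero polynomial cannot vanish on all of $\A^{1}$, and the right-hand side is at most countable), so you need not appeal to the internal structure of the algorithm.
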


A classical result of Plaisted says that computing the degree of the
gcd of two univariate polynomials given in sparse encoding is an
NP-hard problem~\cite{Plaisted:scppr}. Using Plaisted's techniques in
\emph{loc. cit.}, it can be shown that already deciding if the degree
of the gcd is zero, is an NP-hard problem. Hence, if Cook's conjecture
${\rm P} \ne {\rm NP}$ holds, it is not possible to decide if the
degree of the gcd is zero with a complexity
which is polynomial in the height, number of nonzero terms, and
logarithm of the degree of the input polynomials. In contrast to this,
Corollary \ref{cor:1} shows that this problem can be solved with a
complexity which is quasi-linear in the logarithm of the degree
although, \emph{a priori}, not polynomial in the height and number of
nonzero terms.

\bigskip

Our algorithms for the multivariate case rely on an effective
version of the Zilber conjecture generalizing the quoted theorem of
Bombieri and Zannier. For $N\ge0$, we denote by
$\Gm^N=(\Qb^{\times})^{N}$ the (split) algebraic torus over $\Qb$ of
dimension~$N$. Recall that a {torsion coset} of $\Gm^{N}$ is a
connected component of an algebraic subgroup or, equivalently, a
translate of a subtorus by a torsion point. The effective Zilber
conjecture can then be stated as follows:

\medskip 
\begin{quote} {\it 
Let
  $W$ be an irreducible subvariety of $\Gm^N$. There exists a finite
  and effectively calculable collection $\Omega$ of torsion cosets of
  $\Gm^N$ of codimension 1 such
  that, if  $B$ is a torsion coset of
  $\Gm^N$ and $C$ an irreducible component of $B\cap W$ such that
$$
\dim(C) > \dim(B) - \codim(W), 
$$
then there exists $T\in \Omega$ such that $C\subset T$.}
\end{quote} 

Zilber proposed this conjecture (under the equivalent formulation that
we recall in Conjecture \ref{Zilber_Conj1}) in connection with the so-called
\og uniform Schanuel conjecture\fg{} and motivated by problems from
model theory \cite{Zilber:esesc}.  It is still unproven, but several
interesting cases are already known.  When we restrict to $\dim(B)=0$,
the statement is equivalent to the toric case of the Manin-Mumford
conjecture. This is a well-known theorem of Laurent~\cite{Laurent:ede}
and an effective proof of it can be found in Schmidt's paper
\cite{Schmidt:hps} or, more explicitly, in the second author's paper
\cite{Leroux:ctpvdlp}. The result by Bombieri and Zannier solves the
case when we restrict to $\dim (B)=1$. The case when $W$ is a curve
was proved by Maurin~\cite{Maurin:caem}, building on previous work by
Bombieri, Masser and Zannier
\cite{BombieriMasserZannier:icasmg}. Moreover, the closely related \og
bounded height conjecture\fg{} has been proved by Habegger in the
general case~\cite{Habegger:bhc}.

The Zilber conjecture plays a central role in the study of \og
unlikely intersections\fg{} and has many applications in number
theory, see for instance the survey \cite{CL:rdie} and the book
\cite{Zannier:spuiag} for accounts of this very active area of
research.

\bigskip A well-determined system of polynomial equations is, by
definition, a complete intersection.  The solution set of a system of
multivariate polynomial equations cannot always be redefined by a
single complete intersection, since it might have components of
different codimensions and, moreover, these components might not be
complete intersections either. Instead, this solution set can be
described as a finite union of complete intersections on open subsets
(Proposition-Definition~\ref{def:2}). Such a decomposition can be
understood as a sort of generalization to the multivariate setting of
the polynomial~$p_{1}p_{2}$ in~\eqref{eq:1} from the univariate case.

The main result of this paper is an algorithm giving a conditional
partial computation of this decomposition for an arbitrary system of
multivariate polynomials (Algorithm \ref{alg:5}). The following
statement makes the size of its output and its complexity precise.

\begin{theorem} \label{main-s} Assume that the effective Zilber
  conjecture holds.  There is an algorithm that, given  $f_{1},\dots,
  f_{s}\in K[x_{1},\dots, x_{n}]$, computes a finite collection
  $\Gamma$ whose elements are sequences $ (p_{1},\ldots,p_{r},q) $ of
  polynomials in $K(\omega)[x_{1},\dots, x_{n}]$ with $\omega$ a root
  of unity, such that either
  $\codim(V(p_{1},\ldots,p_{r})\backslash V(q))=r$ or
  $V(p_{1},\ldots,p_{r})\backslash V(q)=\emptyset$, and
  \begin{equation}\label{eq:8}
V(f_{1},\dots, f_{s})=\bigcup_{(p_{1},\ldots,p_{r},q) \in \Gamma} V(p_1,\ldots,p_r)\backslash V(q).
  \end{equation}

  If both $n$ and $s$ are bounded by a constant $\constant$ and each
  $f_{i}$ is of degree $\leq d$ and  height and number of nonzero
  terms bounded by $\constant$, then the cardinality of $\Gamma$ is
  bounded by $O(1)$, the order of $\omega$ is bounded by $O(1)$, the
  polynomials in $\Gamma$ have degree bounded by $d^{O(1)}$, height
  and number of nonzero coefficients bounded by $O(1)$, and the
  computation is done with $\wt O(\log (d))$ ops, where the implicit
  constants depend only on~$K$ and $\constant$.
\end{theorem}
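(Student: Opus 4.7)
The plan is to give a recursive algorithm that descends on the ambient dimension, combining on the torus at each step a greedy extraction of a complete intersection with the effective Zilber conjecture to handle the components where the codimension fails to grow as expected. As a preliminary reduction, $V(f_1,\dots,f_s)\subset \A^n$ is written as the union of $V(f_1,\dots,f_s)\cap\Gm^n$ and, for each coordinate~$j$, the zero set in $\A^{n-1}$ of the polynomials obtained from $f_1,\dots,f_s$ by setting $x_j=0$. Each boundary piece is again a sparse system with the same bounds on $s$, height and support but in one fewer variable, so an induction on $n$ reduces the problem to describing the torus part.

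On $\Gm^n$, I would scan the $f_i$ in order and build a chain of complete intersections $\Gm^n=W_0\supset W_1\supset\cdots$. Given a complete intersection $W_k$ of codimension~$k$ cut out by a subfamily $p_1,\dots,p_k$, I test whether the next polynomial $f_i$ vanishes identically on some irreducible component of $W_k$. If the codimension increases everywhere, I set $p_{k+1}=f_i$ and continue; otherwise every irreducible component $C$ of $W_k\cap V(f_i)$ of codimension $k$ (rather than the expected $k+1$) is an unlikely intersection in the sense of Zilber, and applying the effective Zilber conjecture to $W_k\cap V(f_i)$ produces a finite, effectively computable collection $\Omega$ of codimension-one torsion cosets in $\Gm^n$ such that each anomalous $C$ lies in some $T\in\Omega$. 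The algorithm then forks: along a generic branch it records the tuple $(p_1,\dots,p_k,f_i,q)$ with $q$ an auxiliary polynomial cutting out the anomalous loci, and along each branch indexed by $T=\bfomega\cdot H$ with $H\subset\Gm^n$ a subtorus it restricts every $f_j$ to $T$ via a monomial substitution $\bfx\mapsto\bfomega\cdot\bfy^M$ and passes the resulting sparse system in $\dim H$ variables recursively to the algorithm over $K(\bfomega)$.

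The recursion tree has depth at most $n=O(1)$ and at each node the effective Zilber conjecture furnishes $|\Omega|=O(1)$ torsion cosets of order~$O(1)$. Monomial substitution preserves sparsity, so the polynomials obtained at each level have degree $d^{O(1)}$ and still have $O(1)$ nonzero terms and bounded height; in particular $|\Gamma|=O(1)$, the joint order of the torsion points involved is $O(1)$, and each polynomial in $\Gamma$ meets the claimed bounds. The dominant cost at each node is the monomial substitution (exponents of $\wt O(\log d)$ bits) together with the codimension test itself, which by the Bombieri--Zannier strategy underlying Theorem~\ref{thm:1} reduces to a bounded number of univariate sparse gcd computations, each of cost $\wt O(\log d)$ ops.

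The main obstacle is precisely the codimension test: deciding whether the sparse polynomial $f_i$ vanishes on an irreducible component of the sparse complete intersection $W_k$ \emph{without} ever writing down defining equations for its ideal. A naive approach through Gr\"obner bases or resultants would cost $d^{\Omega(1)}$ and destroy the bound; in contrast, the effective Zilber conjecture is used both to \emph{locate} the anomalous components, by pushing them into codimension-one torsion cosets, and, recursively inside those cosets, to \emph{certify} their nature, so that the test itself becomes a lower-dimensional instance of the same algorithm. Once this test is available in $\wt O(\log d)$ ops, the recursion assembles into Algorithm~\ref{alg:5} and yields~\eqref{eq:8} together with the stated size and complexity bounds.
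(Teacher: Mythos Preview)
Your proposal has a genuine gap in how the Zilber conjecture is invoked. The statement of Conjecture~\ref{Zilber} concerns components of the intersection of a variety with a \emph{torsion coset} $B$ whose dimension exceeds $\dim(B)-\codim(W)$. In your scheme, the ``anomalous'' components are those of $W_k\cap V(f_i)$ where the codimension fails to increase; but neither $W_k$ nor $V(f_i)$ is a torsion coset, so Zilber says nothing about them. There is no mechanism forcing a component of $W_k$ on which $f_i$ happens to vanish to lie in a torsion coset of bounded complexity. A second, related problem: even if Zilber did apply to $W_k\cap V(f_i)\subset\Gm^n$, this variety is defined by polynomials of degree~$d$, and the collection $\Omega$ in the conjecture depends on the variety. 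You would have no reason to expect $|\Omega|=O(1)$ or that the torsion cosets have order~$O(1)$; both could grow with~$d$, destroying the complexity bound.

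The paper's actual argument avoids both issues by a single device you have omitted: one lifts via the monomial map $\varphi\colon\Gm^n\to\Gm^N$, $\bfx\mapsto(\bfx^{\bfa_1},\dots,\bfx^{\bfa_N})$, so that $V(f_1,\dots,f_s)\cap\Gm^n=\varphi^{-1}(W)$ where $W\subset\Gm^N$ is the \emph{linear} subvariety cut out by $\sum_j\alpha_{i,j}y_j$. The crucial point is that $W$ depends only on the coefficients, not on the exponents, hence not on~$d$; and $\im(\varphi)$ is a subtorus of $\Gm^N$, so the intersection $W\cap\im(\varphi)$ is exactly the Zilber setting. The descent (Algorithm~\ref{alg:3}) stratifies $W$ and applies Zilber \emph{to the strata of $W$}, which are fixed varieties independent of~$d$; the resulting torsion cosets therefore have $O(1)$ data. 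Only at the very end does one pull everything back through~$\varphi$, and that is where the dependence on~$d$ re-enters, costing $\wt O(\log d)$ via Lemma~\ref{torsion-coset}. Your reduction from $\A^n$ to tori matches the paper's, but the core of the argument on $\Gm^n$ needs this lifting to $\Gm^N$ to work.
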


A previous result in this direction, for systems of three polynomials
in two variables, appears in the second author's Ph.D. thesis
\cite{Leroux:apl}.

From~\eqref{eq:8}, one can derive a well-determined description of
$V(f_{1},\dots, f_{s})$ by throwing away the empty pieces. The bounds
in Theorem \ref{main-s} imply that, if the input polynomials are
sparse, then this decomposition into complete intersections outside
hypersurfaces is defined by polynomials that are also sparse. The
actual computation of such a decomposition from the output of
Algorithm \ref{alg:5} amounts to deciding when
$V(p_1,\ldots,p_r)\backslash V(q)=\emptyset$ for each $
(p_{1},\ldots,p_{r},q) \in \Gamma$. Unfortunately, it is not clear yet
how to perform this task with $\wt O(\log(d))$ ops, see
Problem~\ref{prob:2} below.


\bigskip The idea for the algorithms underlying Theorems \ref{thm:1}
and \ref{main-s} is inspired by the method in
\cite{FilasetaGranvilleSchinzel:igcdasp}. It can be explained as
follows. First, by decomposing the affine space into a disjoint union
of algebraic tori, the problem can be reduced to the analogous problem
on the open subset $\Gm^{n}\subset \A^{n}$.  Write
\begin{equation}\label{eq:24}
  f_i=\sum_{j=1}^N \alpha_{i,j}\bfx^{\bfa_{j}}\in K[x_{1},\dots, x_{n}], \quad
i=1,\dots, s,
\end{equation}
with $\alpha_{i,j}\in K$ and $\bfa_{j}\in \Z^{n}$. Then we consider
the homomorphism $\varphi\colon \Gm^{n}\rightarrow \Gm^{N}$ defined by
the exponents $\bfa_{j}$, $j=1,\dots, s$ and the linear forms
$\ell_{i}=\sum_{j=1}^N \alpha_{i,j}y_j$, $i=1,\dots,s$. These forms
define a linear subvariety $W$ of $\Gm^N$ such that
\begin{equation}\label{eq:25}
\varphi^{-1}(W)=  V(f_{1},\dots, f_{s})\cap \Gm^{n}.
\end{equation}
Roughly speaking, the next step consists of producing a stratification
of $W$ by successively intersecting  this subvariety with
torsion cosets of codimension 1 produced by the effective Zilber conjecture.
The output of the algorithm is obtained by considering the inverse
image under the homomorphism $\varphi$ of some of the pieces of this
stratification.


\bigskip
Several questions arose during our work, and we close this introduction by
pointing out two of them. Both problems have interesting algorithmic
consequences and seem to be related to the study of unlikely
intersections in algebraic tori.

\begin{problem} \label{prob:1} Give an algorithm for computing the
  degree of the gcd of two polynomials given in the sparse encoding,
  of degree $\le d$ and bounded height and number of coefficients,
  with $\wt O(\log (d))$~ops.
\end{problem}

An affirmative answer to Problem \ref{prob:1} would allow  to test
divisibility of sparse polynomials with complexity quasi-linear in the
logarithm of their degree.

\begin{problem} \label{prob:2}
  In the setting of Theorem~\ref{main-s}, modify
  the underlying algorithm to exclude the possibility that
  $V(p_{1},\ldots,p_{r})\backslash V(q)=\emptyset$. 
\end{problem}

An affirmative answer to Problem \ref{prob:2} would allow us to
compute the dimension of $V(f_{1},\dots, f_{s})$ with $\wt O(\log
(d))$ ops.  In particular, we could then determine whether the zero
set $V(f_{1},\dots, f_{s})$ is empty with $\wt O(\log (d))$ ops,
extending Corollary \ref{cor:1} to the multidimensional case.

\medskip \noindent {\bf Acknowledgments.}  We thank Ga\"el R\'emond
and Umberto Zannier for useful discussions about unlikely
intersections and the Zilber conjecture. We also thank the referees 
for their many remarks that helped us to improve our presentation.

Part of this work was done while the authors met at the Universitat de
Barcelona, the Université de Caen and the Centro di Ricerca Matematica
Ennio de Giorgi (Pisa). We thank these institutions for their hospitality. 

\section{Notation and auxiliary results} \label{sec:notat-auxil-results-1}

We fix a number field $K\subset \Qb$. Bold letters denote finite sets
or sequences of objects, where the type and number should be clear
from the context: for instance, $\bfx$ might denote the group of
variables $\{x_1,\dots,x_n\}$, so that $K[\bfx^{\pm1}]$ denotes the
ring of Laurent polynomials $K[x_1^{\pm1},\dots,x_n^{\pm1}]$.

Given functions $f,g\colon \N\to \R_{>0}$, the Landau symbols $f=O(g)$
and $f=\wt O(g)$ respectively mean that there are positive constants
$c_{1},c_{2}\ge 0$
such that, for all $m\in \N$, 
\begin{displaymath}
f(m)\le c_{1}\, g(m), \quad f(m)\le c_{1}\, g(m) \max(1,\log(g(m)))^{c_{2}}.
\end{displaymath}
If we want to emphasize the dependence of the constants $c_{1}$ and
$c_{2}$ on parameters, say $N$ and $h$, we will write $f=O_{N,h}(g)$
and $f=\wt O_{N,h}(g)$, respectively.  When these parameters are said
to be \og bounded\fg{}, we omit them from the notation as we do, for
instance, in Lemma \ref{torsion-coset}.

\subsection{Integers and Laurent
  polynomials} \label{sec:integ-laur-polyn}

We denote by $\N$, $\Z$ and $\Q$ the monoid of natural numbers
with~0, the ring of integers and the field of rational numbers,
respectively.  At the computational level, integers are represented in
bit encoding and rational numbers are represented as quotients
of integers.  The complexity of an algorithm will be measured in bit
operations (\emph{ops}).

A \emph{multiplication time function} is a function 
\begin{displaymath}
\MM\colon\N\to\N  
\end{displaymath}
such that integers of bit length $\le k$ can be multiplied using at
most $\MM(k)$ ops. We also assume that, for $k, l\in \N$, this function
verifies $ \MM(kl)\le k^{2}\MM(l)$ and, if $k\ge l$, it also
verifies ${\MM(k)}/{k}\ge {\MM(l)}/{l}$.

Such a function dominates the complexity of many of the basic
computations on $\Z$. In particular, for integers of bit length $\le
k$, division with remainder can be done with $O(\MM(k))$ ops, and their
gcd can be computed with $O(\MM(k)\log(k))$ ops
\cite{vzGG:mca}.  By the Sch\"onhage-Strassen algorithm \cite[Theorem
8.24]{vzGG:mca}, we can take
\begin{displaymath}
 \MM(k)= O(k\log(k)\log(\log(k)))= \wt O (k).  
\end{displaymath}

The number field $K$ is represented by a monic irreducible polynomial
$h\in \Q[z]$ such that $K\simeq \Q[z]/(h)$. The arithmetic operations
of $K$ (sum, difference, multiplication and division by a nonzero
element) can be computed in terms of this representation.  We will not
be concerned by their complexity, since it will be absorbed by the
constants in our bounds.

For $l\ge 1$, we denote by $\upmu_{l}$ the subgroup of $\Qb^{\times}$
of roots of unity of order dividing $l$.  We also set $\upmu_{\infty}$
for the subgroup of $\Qb^{\times}$ of all roots of unity. Hence,
\begin{displaymath}
  \upmu_{\infty}=\bigcup_{l\ge1} \upmu_{l}.
\end{displaymath}
For $N\ge0$, an $N$-tuple 
  of roots of unity $\bfeta\in
\upmu_{l}^{N}$ is represented as $\bfeta= (\omega^{i_{1}}, \dots,
\omega^{i_{N}})$ with $\omega$  a primitive $l$-th root of unity
and $0\le i_{j}\le l-1$. A representation of the
finite extension $K(\bfeta)$ can be computed in terms of $\bfeta$ and
a representation of $K$. Again, the complexity of computing this
representation will play no role in our results.

Laurent polynomials will be represented in sparse encoding: a
Laurent polynomial $f\in K[x_1^{\pm1},\ldots,x_n^{\pm1}]$ will be
given by a sequence of pairs $(\bfa_{j},\alpha_{j})\in \Z^{n}\times K$, $ j=1,\dots, N$,
such that
\begin{displaymath}
 f= \sum_{j=1}^{N}\alpha_{j}\bfx^{\bfa_{j}}.  
\end{displaymath}
We assume that $\bfa_{j}\ne \bfa_{k}$ for $j\ne k$.

For a vector $\bfa=(a_{1},\dots, a_{n})\in \Z^{N}$, we denote its
$\ell^{1}$-norm by
$$|\bfa|=|a_{1}|+\dots+|a_{n}|.$$ 
We respectively define the \emph{support} and the \emph{degree} of $f$ as
\begin{displaymath}
\supp(f)=\{\bfa_{j}\mid \alpha_{j}\ne
0\}, \quad \deg(f)=
\max_{\bfa_{j}\in \supp(f)}|\bfa_{j}|.
\end{displaymath}
When $f$ is a polynomial, this notion of degree coincides with the
usual one.  

We define the \emph{height} of $f$, denoted $\h(f)$, as
the Weil height of the projective point
$(1:\alpha_{1}:\cdots:\alpha_{N})\in \P^{N}$, see for instance
\cite[Chapter 2]{BombieriGubler:heights} or \cite[Chapter
3]{Zannier:lda} for details.  In the particular case when $f\in
\Z[x_1^{\pm1},\ldots,x_n^{\pm1}]$,
\begin{displaymath}
  \h(f)= \max_{j}  \log|\alpha_{j}|.
\end{displaymath}

Ideals of $K[x_1^{\pm1},\ldots,x_n^{\pm1}]$ are represented by
finite families of generators. 

\subsection{Subvarieties and locally closed subsets}\label{sec:subv-locally-clos}

For $N\ge 0$, we set $\Gm^{N}=(\Qb^{\times})^{N}$ and $\A^{N}=\Qb^{n}$
for the algebraic torus and the affine space over $\Qb$ of dimension
$N$. We will mostly work over the algebraic torus and so, for
simplicity, we will define and study subvarieties and locally closed
subsets in that setting. Nevertheless the notions and properties in
this subsection can be easily transported to the affine space.

A \emph{subvariety} of $\Gm^{N}$ is the zero set of an ideal of 
$\Qb[y_1^{\pm1},\ldots,y_N^{\pm1}]$. Following this convention, a
subvariety is not necessarily irreducible.  More generally, a
\emph{locally closed subset} of $\Gm^{N}$ is the intersection of a
subvariety with a (Zariski) open subset.  The \emph{dimension} of a
locally closed subset is defined as the dimension of its (Zariski)
closure.  A locally closed subset is \emph{irreducible} if its closure
is.

Let $W$ be a locally closed subset of $\Gm^{N}$. An \emph{irreducible
  component} of $W$ is an irreducible locally closed subset $C\subset
W$ that is maximal with respect to inclusion.  An irreducible
locally closed subset $C\subset W$ is an irreducible component of $W$
if and only if $\ov C$ is an irreducible component of~$\ov W$ and
$C=\ov C\cap W$. We denote by $\irr(W)$ the finite collection of the
irreducible components of $W$.  There is an irredundant decomposition
\begin{displaymath}
  W=\bigcup_{C\in \irr(W)}C.
\end{displaymath}

Given a family of Laurent polynomials
$\bfF=\{F_1,\ldots,F_s\}\subset  \Qb[y_1^{\pm1},\ldots,y_{N}^{\pm1}]$, we set
\begin{equation*}
V(\bfF)=  V(F_1,\ldots,F_s)=\{\bfy\in \Gm^{N} \mid
F_{1}({\bfy})=\dots=F_{s}({\bfy})=0\} 
\end{equation*}
for the associated subvariety of $\Gm^{N}$. A family of Laurent
polynomials over $K$ can be considered as a family of Laurent
polynomials over $\Qb$ {\it via} the inclusion $K\hookrightarrow
\Qb$. In particular, such a family of Laurent polynomials defines a
subvariety of $\Gm^{N}$.

We represent a subvariety $W$ of $\Gm^{N}$ by a finite family of
Laurent polynomials $\bfF\subset\Qb[y_1^{\pm1},\ldots,y_{N}^{\pm1}]$
such that $W=V(\bfF)$. More generally, a locally closed subset $W$ of
$\Gm^{N}$ is represented by two finite families $\bfF, \bfG
\subset\Qb[y_1^{\pm1},\ldots,y_{N}^{\pm1}]$ such that
$W=V(\bfF)\setminus V(\bfG)$. A subvariety or a locally closed subset
of $\Gm^{N}$ defined over $K$ are represented similarly by finite
families of Laurent polynomials over $K$.

Let $W$ be a subvariety of $\Gm^{N}$.  If $W$ is a hypersurface, then
there exists $F\in \Qb[\bfy^{\pm1}]$ such that $W= V(F)$ because the
ring $\Qb[\bfy^{\pm1}]$ is a unique factorization domain. If the
codimension of $W$ is higher, then $W$ cannot always be described as a
complete intersection. However, it is possible to describe a
subvariety or, more generally, a locally closed subset, as a finite
union of complete intersections outside hypersurfaces.

\begin{prop-def}
  \label{def:2} Let $W$ be a locally closed subset of $\Gm^N$. Then
  there exists a family of locally closed subsets $W_{j}$, $j=1,\dots,
  t$, given as
$$
W_j=V(F_{j,1},\ldots,F_{j,\codim(W_{j})})\backslash V(G_j)
$$ 
with $F_{j,l}$, $G_j\in \Qb[y_{1}^{\pm 1},\dots, y_{N}^{\pm 1}]$, and satisfying the
following conditions:
\begin{enumerate}
\item \label{item:3} $\codim(W)= \codim(W_{1})<\dots<\codim(W_{t})\le N$;
\item \label{item:4}  $W=\bigcup_{i=1}^{t}W_{j}$.
\end{enumerate}
A family of locally closed subset $(W_{j})_{j}$ as above is called a \emph{complete
  intersection stratification} of $W$.

    If $W$ is defined over $K$, then $(W_{j})_{j}$ can be chosen to be
    defined over $K$ too. 
    In that case, the complete
    intersection stratification is said to be \emph{defined over $K$}.
\end{prop-def}

\begin{proof}
  Set $c=\codim(W)$. We first show that there exist $F_1,\ldots,F_{c},
  G\in \Qb[\bfy^{\pm1}]$ such that $F_1,\ldots,F_{c}$ is a complete
  intersection, $V(G)$ contains no irreducible component of $W$ of
  codimension $c$ and
\begin{equation}
  \label{eq:2}
W\backslash V(G)=V(F_1,\ldots,F_{c})\backslash V(G).  
\end{equation}

  Let $\bfP, \bfQ\subset \Qb [\bfy^{\pm1}]$ such that $W=V(\bfP)\setminus
  V(\bfQ)$ and set
\begin{displaymath}
  F_{l}=\sum_{i}\lambda_{l,i}P_{i}, \quad l=1,\dots, c, \quad \text{and}\quad     Q=\sum_{j}\mu_{j}Q_{j} 
\end{displaymath}
for a choice of $\lambda_{l,i},\mu_{j}\in \Qb$. It can be shown that
if this choice is generic in the sense that the point
$(\bflambda,\bfmu)$ does not lie in a certain hypersurface of the
parameter space, then $F_{1},\dots, F_{c}$ defines a complete
intersection in the complement of $V(\bfQ)$, and the
hypersurface~$V(Q)$ contains no irreducible component of $W$. Assume
that our choice of $\lambda_{l,i},\mu_{j}$ is generic in this sense. 
Then we have that
\begin{displaymath}
 W\setminus V(Q)\subset V(\bfF)\setminus V(Q)  
\end{displaymath}
and, if $C$ is an irreducible component of $W$ of codimension $c$, then
$C\setminus V(Q)$ is an irreducible component of both $ W\setminus
V(Q)$ and $V(\bfF)\setminus V(Q)$. 

Choose $Q'\in \Qb[\bfy^{\pm1}]$ such that the hypersurface $V(Q')$
contains the irreducible components of $ W\setminus V(Q)$ of
codimension $\ge c+1$ and the irreducible components of
$V(\bfF)\setminus V(Q)$ which are not a component of $ W\setminus
V(Q)$, and does not contain any of the other ones. Finally, set
$G=QQ'$. It is not difficult to verify that the Laurent polynomials
$F_{1},\dots, F_{c},G$ satisfy \eqref{eq:2}.

If $\bfP,\bfQ \subset K[\bfy^{\pm1}]$, we can also verify that
$\lambda_{l,i}, \mu_{j}$ and the coefficients of~$Q'$ can be chosen to
lie in $K$, so that $F_{1},\dots, F_{c},G\in K[\bfy^{\pm1}]$.

Now set $W_{1}= V(\bfF)\setminus V(G)$ with $\bfF,G$ as in
\eqref{eq:2}. The intersection $W\cap V(G)$ has codimension $\ge c+1$,
and so we can  construct $W_{2},\dots, W_{t}$ by applying this construction
iteratively. The  family $(W_{j})_{j}$ that we obtain satisfies the
conditions \eqref{item:3} and \eqref{item:4}. It is also clear that,
if $W$ is defined over $K$, then so is $(W_{j})_{j}$.
\end{proof}

Given a locally closed subset $W$ of $\Gm^{N}$, a complete
intersection stratification can be computed either by applying
elimination theory or Gr\"obner basis algorithms. For instance, the computation
of the first piece $W_{1}$ in the case when ${\codim(W)}=2$ has been
worked out in detail in the second author's Ph.D. thesis 
\cite[\S~2.2.2]{Leroux:apl}. The complexity of this procedure will
play no role in our results.


Given a map $\phi\colon \Gm^{n}\to \Gm^{N}$, we denote
by 
\begin{displaymath}
 \phi^{\#}\colon \Qb[y_{1}^{\pm1},\dots,
 y_{N}^{\pm1}]\longrightarrow \Qb[x_{1}^{\pm1},\dots, x_{n}^{\pm1}]   
\end{displaymath}
the associated morphism of algebras. If $\psi\colon \Gm^{N}\to
\Gm^{M}$ is a further map, then 
\begin{equation}\label{eq:23}
  (\psi\circ\phi)^{\#}= \phi^{\#}\circ \psi^{\#}.
\end{equation}
Given an ideal $I\subset \Qb[\bfy^{\pm1}]$, we denote by
$\phi^{\#}(I)$ the ideal of $\Qb[\bfx^{\pm1}]$ generated by the
image of $I$ under $\phi^{\#}$. We have that 
\begin{equation}\label{eq:19}
  V(\phi^{\#}(I))= \phi^{-1}(V(I)).
\end{equation}

\subsection{Homomorphisms and torsion cosets  of algebraic
  tori} \label{sec:tors-cosets-homom} We recall the basic notation and
properties of homomorphisms and algebraic subgroups of tori. We refer
to \cite[\S~3.2]{BombieriGubler:heights} or \cite[Chapter
4]{Zannier:lda} for more details, including the proofs of the
quoted results.

For $n,N\ge 0$, we denote by $\Hom(\Gm^{n},\Gm^{N})$ the set of
homomorphism from $\Gm^{n}$ to $\Gm^{N}$, and by $\Aut(\Gm^{N})$ the
group of automorphisms of $\Gm^{N}$. We denote by $\id_{\Gm^{N}}$ the
identity automorphism of $\Gm^{N}$.

Given a point $\bfx=(x_1,\ldots,x_n)\in\Gm^n$ and a vector
$\bfa=(a_1,\ldots,a_n)\in\Z^n$, we set $\bfx^{\bfa}=x_1^{a_1}\cdots
x_n^{a_n}$. A matrix $A\in \Z^{N\times n}$ with rows 
$\bfa_1,\ldots,\bfa_N\in\Z^n$ gives  a homomorphism
$\varphi_{A}\colon \Gm^{n}\rightarrow \Gm^{N}$ defined, for $\bfx\in
\Gm^{n}$, by
\begin{displaymath}
\varphi_{A}(\bfx)=(\bfx^{\bfa_1},\ldots,\bfx^{\bfa_N}).
\end{displaymath}
The correspondence $A\mapsto \varphi_{A}$ is a bijection between
$\Z^{N\times n}$ and $\Hom(\Gm^{n},\Gm^{N})$.  Given matrices $A\in
\Z^{N\times n}$ and $B\in \Z^{M\times N}$, we have that
\begin{displaymath}
\varphi_{B}\circ  \varphi_{A}= \varphi_{BA}.
\end{displaymath}
We define the \emph{size} of a homomorphism $\varphi\in
\Hom(\Gm^{n},\Gm^{N})$ as 
\begin{displaymath}
  \size(\varphi)=
\max_{j}|\bfa_{j}|,
\end{displaymath}
where $\bfa_{j}$, $j=1,\dots, N$, are the rows of the matrix
corresponding to $\varphi$.

We denote by $1_{\Gm^{N}}=(1,\dots, 1)$ the neutral element of
$\Gm^{N}$.  The subgroup of torsion points of $\Gm^{N}$ agrees with
$\upmu_{\infty}^{N}$.  A \emph{subtorus} of $\Gm^{N}$ is a connected
algebraic subgroup or, equivalently, an algebraic subgroup which is
the image of a homomorphism.  A \emph{torsion coset} of $\Gm^{N}$ is
the translate of a subtorus by a torsion point. A \emph{torsion
  subvariety} of~$\Gm^{N}$ is a finite union of torsion cosets.

A submodule $\Lambda$ of $\Z^{N}$ defines an algebraic subgroup of
$\Gm^{N}$ by
\begin{equation} \label{eq:15}
  H_{\Lambda}=V(\{\bfy^{\bfb}-1 \mid \bfb\in \Lambda\}). 
\end{equation}
The correspondence $\Lambda\to H_{\Lambda}$ is a bijection between the
set of submodules of $\Z^{N}$ and that of algebraic subgroups of
$\Gm^{N}$.  This correspondence reverses dimension, in the sense that
\begin{displaymath}
\codim(H_{\Lambda})= \rank(\Lambda). 
\end{displaymath}
The algebraic subgroup $H_{\Lambda}$ is a subtorus if and only if the
subgroup $\Lambda$ is \emph{saturated}, that is, if and only if
$\Lambda=\R\Lambda\cap \Z^{N}$.

For a submodule $\Lambda$ of $\Z^{N}$, we denote by
$\Lambda^{\sat}=\R\Lambda\cap \Z^{N}$ its \emph{saturation}.  Then
$H_{\Lambda^{\sat}}$ is a subtorus and there is a finite subgroup
$F\subset\upmu_{\infty}^{N}$ of cardinality $[\Lambda^{\sat}:\Lambda]$
such that
\begin{equation} \label{eq:20}
  H_{\Lambda}= F\cdot
  H_{\Lambda^{\sat}}.
\end{equation}

Given a locally closet subset $Y\subset \Gm^{N}$, we denote by $\langle
Y\rangle $ the minimal algebraic subgroup of $\Gm^{N}$ containing
$Y$. Equivalently, 
\begin{displaymath}
  \langle Y\rangle = \bigcap_{H\supset Y}H,
\end{displaymath}
the intersection being over all algebraic subgroups $H$ of $\Gm^{N}$ containing
$Y$. The \emph{multiplicative rank} of $Y$ is defined as
\begin{displaymath}
  \rank(Y)=\dim(\langle Y\rangle).
\end{displaymath}
For instance, a point of $\Gm^N$ has rank $0$ if and only if it is a torsion point,
and it has rank $N$ if and only if its coordinates are multiplicatively
independent. 

The following lemma studies the behavior of the multiplicative rank
under a homomorphism of algebraic tori. 

\begin{lemma} \label{lemm:2} Let $\varphi\colon\Gm^{n}\to \Gm^{N}$ be
  a homomorphism, $Y\subset \im(\varphi)$ a irreducible locally closed
  subset and $C$ an irreducible component of $\varphi^{-1}(Y)$. Then
\begin{displaymath}
  \rank(C)-\dim(C)=   \rank(Y)-\dim(Y).
\end{displaymath}
\end{lemma}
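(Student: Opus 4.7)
The plan is to establish the stronger pair of identities $\dim(C)=\dim(Y)+k$ and $\rank(C)=\rank(Y)+k$, with $k:=\dim\ker(\varphi)$, and then subtract to cancel $k$. Before starting I would reduce to the case where $Y$ and $C$ are closed and irreducible: rank and dimension are defined via closures, and since a surjective homomorphism of algebraic tori $\varphi\colon\Gm^n\to\im(\varphi)$ is faithfully flat (hence open), $\overline{\varphi^{-1}(Y)}=\varphi^{-1}(\overline{Y})$, so $\overline{C}$ is an irreducible component of $\varphi^{-1}(\overline{Y})$.

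Flatness of $\varphi$ makes $\varphi^{-1}(Y)$ equidimensional of dimension $\dim(Y)+k$, which immediately yields $\dim(C)=\dim(Y)+k$. For the corresponding statement on ranks, let $T=\langle Y\rangle$ and $H=\varphi^{-1}(T)$. Since $Y\subset\im(\varphi)$ forces $T\subset\im(\varphi)$, the restriction $\varphi\colon H\to T$ is a surjection of algebraic subgroups with kernel $\ker(\varphi)$, so $\dim(H)=\rank(Y)+k$. From $C\subset\varphi^{-1}(Y)\subset H$ we obtain $\langle C\rangle\subset H$, giving the easy half $\rank(C)\le\rank(Y)+k$.

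The main obstacle is the reverse inequality $\rank(C)\ge\rank(Y)+k$, equivalently $\dim\langle C\rangle\ge\dim(H)$. I would split it into two assertions. First, $\varphi(\langle C\rangle)=T$: by equidimensionality $\varphi(C)$ is dense in $Y$ (otherwise $C\subset\varphi^{-1}(\overline{\varphi(C)})$, a closed set of dimension strictly less than $\dim(C)$), so the algebraic subgroup $\varphi(\langle C\rangle)$ contains $\overline{\varphi(C)}=Y$ and hence $\langle Y\rangle=T$, while also lying in $\varphi(H)=T$. Second, $\langle C\rangle\cap\ker(\varphi)$ has dimension $k$: for generic $y\in Y$, the fiber $C\cap\varphi^{-1}(y)$ has dimension $k$, and picking $c_y$ in this fiber, the translate $c_y^{-1}\bigl(C\cap\varphi^{-1}(y)\bigr)$ is a $k$-dimensional subset of $\ker(\varphi)$ that lies in $\langle C\rangle$ because $c_y\in C\subset\langle C\rangle$. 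Combining the surjectivity of $\varphi\colon\langle C\rangle\to T$ with $\dim\bigl(\langle C\rangle\cap\ker(\varphi)\bigr)\ge k$ then gives $\dim\langle C\rangle\ge\dim(T)+k$, closing the argument.
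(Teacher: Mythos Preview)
Your argument is correct. Both the dimension count and the two halves of the rank inequality go through as you describe; in particular, your generic-fiber trick to exhibit a $k$-dimensional subset of $\langle C\rangle\cap\ker(\varphi)$ is valid, and the short exact sequence for $\varphi|_{\langle C\rangle}$ then closes the estimate.

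The paper takes a slightly different and shorter route. Rather than working with a single component $C$, it first computes $\dim(\varphi^{-1}(Y))$ and $\rank(\varphi^{-1}(Y))$ directly: any fiber $\varphi^{-1}(\xi)$ is a coset of $\ker(\varphi)$, so a subgroup containing it must contain $\ker(\varphi)$; together with $\varphi(\langle\varphi^{-1}(Y)\rangle)=\langle Y\rangle$ this yields an exact sequence $0\to\ker(\varphi)\to\langle\varphi^{-1}(Y)\rangle\to\langle Y\rangle\to 0$ and hence $\rank(\varphi^{-1}(Y))=\rank(Y)+k$. Only then does the paper pass to components, using the decomposition $\ker(\varphi)=F\cdot T$ (finite times subtorus) to write $\varphi^{-1}(Y)=\bigcup_{\eta\in F}\eta C$, so that all components are torsion-translates of each other and therefore share the same dimension and rank as $\varphi^{-1}(Y)$. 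This sidesteps your density-of-$\varphi(C)$ and generic-fiber arguments entirely. Your approach, by contrast, never invokes the structure of $\ker(\varphi)$ and would adapt more readily to situations where the kernel is not as well-behaved; the price is a bit more work to show $\langle C\rangle$ is large enough.
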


\begin{proof} 
  For each point $\bfxi\in Y$, we have that $\varphi^{-1}(\bfxi)$ is a
  translate of the kernel of $\varphi$ by a point of $\Gm^{n}$.  Hence
  $\dim(\varphi^{-1}(\bfxi))=\dim(\ker(\varphi))$ and, by the theorem
  of dimension of fibers,
\begin{equation}\label{eq:13}
 \dim(\varphi^{-1}(Y))=\dim(Y)+
\dim(\ker(\varphi))
\end{equation}

Set $H=\langle \varphi^{-1}(Y) \rangle$ for the minimal algebraic
subgroup of $\Gm^{n}$ containing $\varphi^{-1}(Y)$. If $\bfxi$ is any
point of $Y$, then $\varphi^{-1}(\bfxi)\subset H$. Since
$\varphi^{-1}(\bfxi)$ is a translate of $\ker(\varphi)$ and $H$ is a
group, it follows that $\ker(\varphi)\subset H$.  Moreover, we have
that $\varphi(H)=\langle Y\rangle$ and so there is an exact sequence
$0\to\ker(\varphi)\to\langle \varphi^{-1}(Y)\rangle\to\langle
Y\rangle\to 0$. We deduce that
  \begin{multline}  \label{eq:12}
\rank(\varphi^{-1}(Y))= \dim(\langle \varphi^{-1}(Y)\rangle) \\ =\dim(\langle Y\rangle) +
  \dim(\ker(\varphi))= \rank(Y)+
  \dim(\ker(\varphi)) .
  \end{multline}

Let $F\subset\upmu_{\infty}^{N}$ be a finite subgroup and
  $T\subset\Gm^{N}$ a subtorus such that $\ker(\varphi)= F\cdot T$ as
  in \eqref{eq:20}
.  Let $C$ be an irreducible component of
  $\varphi^{-1}(Y)$. Then the decomposition of $\varphi^{-1}(Y)$ into
  irreducible components is given by
\begin{displaymath}
  \varphi^{-1}(Y)=\bigcup_{\bfeta\in F}\bfeta C. 
\end{displaymath}
Hence, $\dim(C)= \dim(\varphi^{-1}(Y))$ and $\rank(C)=
\rank(\varphi^{-1}(Y))$. The statement then follows from \eqref{eq:13}
and \eqref{eq:12}.
\end{proof}

We represent torsion cosets as complete intersections of binomials
whose coefficients are roots of unity. In precise terms, let $B$ be a
torsion coset of $\Gm^{N}$ and write $B=\bfeta H_{\Lambda}$ with
$\bfeta\in \upmu_{\infty}$ and $ H_{\Lambda}$ a subtorus of
codimension $r$ corresponding to a saturated submodule $\Lambda\subset
\Z^{N}$. Choose a basis $\bfb_{j}\in \Z^{N}$, $j=1,\dots, r$, of
$\Lambda$.  Then there exist $\xi_{j}\in \upmu_{\infty}$, $j=1,\dots,
r$, such that
\begin{equation} \label{eq:17}
  B=V(\bfy^{\bfb_{1}}-\xi_{1},\dots, \bfy^{\bfb_{r}}-\xi_{r}). 
\end{equation}

The following procedure allows us to compute the preimage of a torsion
coset under a homomorphism. 

 \begin{algorithm}
    \caption{(Preimage of a torsion coset)} \label{alg:11}
    \begin{algorithmic}[1] 
\REQUIRE a homomorphism
$\varphi\colon\Gm^n\rightarrow\Gm^N$ and a torsion coset
$B\subset\Gm^N$.  
\ENSURE either  \og $\varphi^{-1}(B)=\emptyset$\fg{} or a finite
collection  
$\bfE\subset\Qb[\bfx^{\pm1}]$ of binomials with coefficients in $\upmu_{\infty}$.
\STATE \label{alg:18} Let $A\in \Z^{N\times n}$ be the $N\times
n$-matrix of  $\varphi$ and write $B=V(\bfy^{\bfb_{1}}-\xi_{1},\dots,
\bfy^{\bfb_{r}}-\xi_{r})$ with $\bfb_{j}\in \Z^{N}$ and $\xi_{j}\in
\upmu_{\infty}$;
\STATE \label{alg:19} set $\bfc_{j}=\bfb_{j}A\in \Z^{n}$, $j=1,\dots, r$;
\STATE \label{alg:107} compute a
basis $\bfe_{k}\in \Z^{n}$, ${k=1,\dots, t}$, of the submodule generated
by the $\bfc_{j}$'s;
\STATE \label{alg:105}
compute  $\lambda_{k,j},\mu_{j,k}\in \Z$ such that  $\bfe_{k}=\sum_{j}\lambda_{k,j}\bfc_{j}$ and
$\bfc_{j}=\sum_{k}\mu_{j,k}\bfe_{k}$;
\STATE \label{alg:38} set  $\rho_{k}\leftarrow \prod_{j}\xi_{j}^{\lambda_{k,j}}$,
$k=1,\dots, t$;
\IF{$\prod_{k}\rho_{k}^{\mu_{j,k}}=\xi_{j}$ for all $j$} \label{alg:109}
\RETURN \label{alg:106} 
$\bfE\leftarrow\{\bfx^{\bfe_{k}}-\rho_{k}\}_{1\le k\le t}$; 
\ELSE 
\RETURN \og $\varphi^{-1}(B)=\emptyset$\fg{}.
\ENDIF
    \end{algorithmic}
  \end{algorithm}

\begin{lemma}
\label{torsion-coset}
Given a homomorphism $\varphi\colon\Gm^n\rightarrow\Gm^N$ and a
torsion coset $B\subset \Gm^N$, Algorithm~\ref{alg:11} decides if
$\varphi^{-1}(B)\ne\emptyset$. If this is the case, it computes a
finite collection $\bfE=\{\bfx^{\bfe_{k}}-\rho_{k}\}_{1\le k\le t}$ of
binomials in $\Qb[\bfx^{\pm1}]$ with coefficients in $\upmu_{\infty}$
such that $t=\codim( \varphi^{-1}(B))$ and
\begin{equation}\label{eq:3}
  \varphi^{-1}(B)=V(\bfx^{\bfe_{1}}-\rho_{1}, \dots, \bfx^{\bfe_{t}}-\rho_{t}).
\end{equation}

If $n$ and $N$ are bounded, $B$ is given as in \eqref{eq:17} with
$\xi_{j}\in \upmu_{l}$ with $l$ bounded and $\size(\varphi)\le d$,
then each coefficient $\rho_{k}$ has order bounded by~$O(1)$ and each
binomial has degree bounded by $d^{O(1)}$ (by $d$ in the case $n=1$). The
complexity of the algorithm is bounded by
$O(\MM(\log(d))\log(\log(d)))=\wt O(\log(d))$ ops.
\end{lemma}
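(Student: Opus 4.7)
The plan is to check, in order: the equations defining $\varphi^{-1}(B)$ from which the algorithm starts, the correctness of the consistency test at step~\ref{alg:109}, the codimension formula $t=\codim(\varphi^{-1}(B))$, the size of the output, and finally the complexity. From the definition $\varphi=\varphi_{A}$ we have $\varphi^{\#}(\bfy^{\bfb})=\bfx^{\bfb A}$ for every $\bfb\in\Z^{N}$, so \eqref{eq:19} gives
\begin{displaymath}
\varphi^{-1}(B)=V(\bfx^{\bfc_{1}}-\xi_{1},\dots,\bfx^{\bfc_{r}}-\xi_{r})
\end{displaymath}
with $\bfc_{j}=\bfb_{j}A$ as in step~\ref{alg:19}. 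Setting $\Lambda'=\sum_{j}\Z\bfc_{j}$ and using the relations from steps~\ref{alg:107}--\ref{alg:105}, a direct multiplicative rewriting gives two implications: $\bfx^{\bfc_{j}}=\xi_{j}$ for all $j$ implies $\bfx^{\bfe_{k}}=\prod_{j}\xi_{j}^{\lambda_{k,j}}=\rho_{k}$ for all $k$; and conversely $\bfx^{\bfe_{k}}=\rho_{k}$ for all $k$ implies $\bfx^{\bfc_{j}}=\prod_{k}\rho_{k}^{\mu_{j,k}}$ for all $j$. Hence the test at step~\ref{alg:109} is both necessary for $\varphi^{-1}(B)\ne\emptyset$ (pick any $\bfx$ in a putative preimage and combine both implications) and sufficient to produce~\eqref{eq:3}.

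When the check passes, a solution exists because $\bfe_{1},\dots,\bfe_{t}$ are $\Z$-linearly independent, so the homomorphism $\Gm^{n}\to\Gm^{t}$, $\bfx\mapsto(\bfx^{\bfe_{1}},\dots,\bfx^{\bfe_{t}})$, has closed image of dimension $t$ and is therefore surjective. The returned variety $V(\bfx^{\bfe_{1}}-\rho_{1},\dots,\bfx^{\bfe_{t}}-\rho_{t})$ is a coset of the algebraic subgroup $H_{\Lambda'}$, and the correspondence recalled in Section~\ref{sec:tors-cosets-homom} gives $\codim(\varphi^{-1}(B))=\codim(H_{\Lambda'})=\rank(\Lambda')=t$.

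For the output bounds, the hypothesis $\size(\varphi)\le d$ together with $r,n,N=O(1)$ and boundedness of the $\bfb_{j}$ yields entries of the $\bfc_{j}$ of bit length $O(\log d)$. Standard bounds on the Hermite normal form of a bounded-size matrix with entries of this size give $|\bfe_{k}|\le d^{O(1)}$, together with transition coefficients $\lambda_{k,j}$ and $\mu_{j,k}$ of bit length $O(\log d)$; in the special case $n=1$, the basis reduces to $\bfe_{1}=\gcd(\bfc_{1},\dots,\bfc_{r})$, whence $|\bfe_{1}|\le\max_{j}|\bfc_{j}|=O(d)$. Since each $\xi_{j}$ lies in $\upmu_{l}$ with $l=O(1)$, so does each $\rho_{k}=\prod_{j}\xi_{j}^{\lambda_{k,j}}$, which thus has order $O(1)$.

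The complexity estimate is then routine: the matrix products at step~\ref{alg:19} and the Hermite normal form with bookkeeping at steps~\ref{alg:107}--\ref{alg:105} involve bounded-size matrices with entries of bit length $O(\log d)$ and can be performed in $\wt O(\log d)$ ops using fast integer arithmetic; the exponentiations at steps~\ref{alg:38} and~\ref{alg:109} reduce integer exponents of size $O(\log d)$ modulo $l=O(1)$ at cost $O(\MM(\log d))$ each, followed by $O(1)$ multiplications in $\upmu_{l}$. The step requiring the most care is the Hermite normal form analysis---verifying both that it fits the target complexity and that it produces transition coefficients of the claimed size---which is the one I would examine most carefully in a full write-up.
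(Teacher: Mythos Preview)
Your argument is correct and follows essentially the same route as the paper's proof: both start from $\varphi^{-1}(B)=V(\bfx^{\bfc_1}-\xi_1,\dots,\bfx^{\bfc_r}-\xi_r)$, rewrite via the basis $\bfe_1,\dots,\bfe_t$ of the lattice generated by the $\bfc_j$, identify the consistency test as the condition $\prod_k\rho_k^{\mu_{j,k}}=\xi_j$, and then bound degrees and cost through the Hermite normal form. The only stylistic difference is that the paper phrases the equivalence at the level of ideals (showing the ideals $I=(\bfx^{\bfc_j}-\xi_j)_j$ and $J=(\bfx^{\bfe_k}-\rho_k)_k$ coincide when the test passes, and that $I$ contains a nonzero constant otherwise), whereas you argue pointwise; your explicit surjectivity remark to guarantee nonemptiness is a nice addition that the paper leaves implicit.
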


\begin{proof}
Let notation be as in Algorithm \ref{alg:11}. By
\eqref{eq:19},
\begin{equation}\label{eq:21}
  \varphi^{-1}(B)=V(\bfx^{\bfc_{1}}-\xi_{1},\dots, \bfx^{\bfc_{r}}-\xi_{r}).
\end{equation}
Consider the ideals of $K[\bfx^{\pm1}]$ given by
\begin{displaymath}
  I=(\bfx^{\bfc_{1}}-\xi_{1},\dots,
\bfx^{\bfc_{r}}-\xi_{r}), \quad J= (\bfx^{\bfe_{1}}-\rho_{1}, \dots, \bfx^{\bfe_{t}}-\rho_{t}).
\end{displaymath}
By construction, we have that $\bfx^{\bfe_{k}}-\rho_{k}\in I$ for all
$k$ and, if we set 
$\xi_{j}'=\prod_{k}\rho_{k}^{\mu_{j,k}}$, we have similarly 
that $\bfx^{\bfc_{j}}-\xi_{j}'\in J$ for all $j$. Then 
\begin{equation} \label{eq:22}
  \xi_{j}-\xi_{j}'=
  (\bfx^{\bfc_{j}}-\xi_{j}')-(\bfx^{\bfc_{j}}-\xi_{j}) \in (\bfx^{\bfc_{1}}-\xi_{1},\dots, \bfx^{\bfc_{r}}-\xi_{r}).
\end{equation}
Hence, if there exists $j$ such that $\xi_{j}'\ne \xi_{j}$, it follows
from \eqref{eq:21} and \eqref{eq:22} that
$\varphi^{-1}(B)=\emptyset$. Otherwise, we deduce that $I=J$ and so
$\varphi^{-1}(B)=V(J)$, which proves \eqref{eq:3}. Moreover, the
binomials $\bfx^{\bfe_{k}}-\rho_{k}$, $k=1,\dots, t$, form a complete
intersection because the vectors $\bfe_{k}$, $k=1,\dots, t$, are
linearly independent. Hence $t=\codim(B)$, as stated.

Now suppose that $n$ and $N$ are bounded, $B$ is given as in
\eqref{eq:17} with $\xi_{j}\in \upmu_{l}$ with $l$ bounded, and
$\size(\varphi)\le d$.  The computation of the integers
$\lambda_{k,j},\mu_{j,k}$ in line \ref{alg:105} can be derived from
the Hermite normal form as defined in
\cite[Definition~2.4.2]{Cohen:ccant}, for the matrix with rows
$\bfc_{j}$, $j=1,\dots, r$. This Hermite normal form can be computed
using Algorithm 2.4.5 in~{\it loc.~cit.}~using a bounded number of gcd
computations and multiplications of integers of size bounded by
$\log(d)$. Hence, the integers $\lambda_{k,j},\mu_{j,k}$ have size
bounded by $O(\log(d))$ and the complexity of these steps is bounded
by $O(\MM(\log(d))\log(\log(d)))$.  In particular,
$\deg(\bfx^{\bfe_{k}}-\rho_{k})\le d^{O(1)}$. When $n=1$, we have that
$t=1$ and $\bfe_{1}=\gcd(\bfc_{1},\dots, \bfc_{r})$, and so
$\deg(\bfx^{\bfe_{1}}-\rho_{1})\le d$ in this case.

The computation in line \ref{alg:38} and the verification in line
\ref{alg:109} can be done using repeated squaring. The overall
complexity is bounded by
$O(\MM(\log(d))\log(\log(d)))$, which completes the proof.
\end{proof}

Let $T$ be a torsion coset of $\Gm^{N}$ of codimension 1. By
\eqref{eq:17}, there is a primitive vector~$\bfb\in \Z^{N}$ and
$\xi\in \upmu_{\infty}$ such that
\begin{displaymath}
  T=V(\bfy^{\bfb}-\xi).
\end{displaymath}
The following simple lemma gives a monomial change of coordinates
putting $T$ into a standard position.

\begin{lemma}
  \label{lemm:1} 
  Let $T$ be be a torsion coset of $\Gm^{N}$ of codimension 1 given as
  $ T=V(\bfy^{\bfb}-\xi)$ for a primitive vector $\bfb\in \Z^{N}$ and
  $\xi\in \upmu_{\infty}$.  Let $\bfb_{j}\in \Z^{N}$, $j=1,\dots,N-1$,
  be a family of vectors completing $\bfb$ to a basis of $\Z^{N}$ and
  $\tau\in \Aut(\Gm^{N})$ the automorphism given, for $\bfy\in
  \Gm^{N}$, by
\begin{displaymath}
\tau( \bfy)=
 (\bfy^{\bfb_{1}}, \dots, \bfy^{\bfb_{N-1}},
 \bfy^{\bfb}). 
\end{displaymath}
Then 
  \begin{math}
    \tau(T)= V(y_{N}-\xi).
  \end{math}
\end{lemma}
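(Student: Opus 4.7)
The plan is to deduce the equality $\tau(T) = V(y_N - \xi)$ by computing the pullback $\tau^{-1}(V(y_N - \xi))$ via the formula~\eqref{eq:19} and showing it equals $T$; since $\tau$ is a bijection, applying $\tau$ to both sides then yields the statement.

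First I would verify that $\tau$ is indeed an automorphism. The matrix $M \in \Z^{N\times N}$ whose rows are $\bfb_{1}, \dots, \bfb_{N-1}, \bfb$ satisfies $\det(M) = \pm 1$, because by hypothesis these vectors form a $\Z$-basis of $\Z^N$. Hence $M$ is unimodular, so $\tau = \varphi_{M}$ lies in $\Aut(\Gm^N)$, with inverse $\varphi_{M^{-1}}$. This also justifies that $\tau^{-1}$ is again a homomorphism and, in particular, that applying $\tau$ commutes with taking preimages of subvarieties.

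Second, I would unwind the definition of $\tau^{\#}$: since the $N$-th row of $M$ is $\bfb$, we have $\tau^{\#}(y_{N}) = \bfy^{\bfb}$ (using $\bfy$ for the coordinates of both source and target of $\tau$). Combining this with~\eqref{eq:19}, I would compute
\begin{displaymath}
  \tau^{-1}\bigl(V(y_{N}-\xi)\bigr) = V\bigl(\tau^{\#}(y_{N}-\xi)\bigr) = V(\bfy^{\bfb}-\xi) = T.
\end{displaymath}
Applying the bijection $\tau$ to both sides gives $\tau(T) = V(y_{N}-\xi)$, as claimed.

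There is essentially no obstacle here beyond recalling the standard fact that the coordinate matrix of a $\Z$-basis of $\Z^N$ is unimodular; the rest is a direct symbolic calculation using the functorial identity~\eqref{eq:19} already established in the preliminaries.
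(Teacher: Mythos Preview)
Your proof is correct and is exactly what the paper intends: the paper's own proof reads ``This follows easily from the definitions,'' and you have simply written out those definitions and the application of~\eqref{eq:19} in full.
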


\begin{proof}
This follows easily from the definitions. 
\end{proof}

\section{Unlikely intersections in algebraic tori}\label{sec:unlik-inters-tori}

In this section, we collect the different results and conjectures on
unlikely intersections in algebraic tori that we will use in the rest
of the paper.

As mentioned in the introduction, the algorithm of Filaseta, Granville
and Schinzel relies on a theorem of Bombieri and Zannier. We
recall the statement of this result, in the refined form later
obtained by these authors together with Masser in \cite[Theorem
4.1]{BombieriMasserZannier:assta}.

\begin{theorem} \label{BZ} Let $W$ be an irreducible subvariety of
  $\Gm^{N}$ of codimension $\ge2$. There exists a constant $c_{W}$
  depending only on $W$ with the following property. Let $\bfzeta\in
  \upmu_{\infty}^{N}$, $\bfa\in \Z^{N}$ and $\alpha\in \Qb^{\times}$. If
  $(\zeta_1\alpha^{a_1},...,\zeta_N\alpha^{a_N}) \in W$, then there exist
  $\bfb\in\Z^{N}\setminus \{\bfzero\}$ such that
\begin{displaymath}
\max_{j}|b_j| \leq c_{W} \quad \text{and} \quad  \prod_{j}(\zeta_j\alpha^{a_j})^{b_j} = 1.
\end{displaymath}
In particular, if $\alpha\notin\upmu_{\infty}$, then $\sum_{j}a_jb_j=0$.
\end{theorem}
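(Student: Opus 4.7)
The plan is to reduce this result to the theorem on unlikely intersections with $1$-dimensional torsion cosets, following Bombieri--Masser--Zannier. The key observation is that the point $P=(\zeta_1\alpha^{a_1},\dots,\zeta_N\alpha^{a_N})$ has multiplicative rank at most $1$: when $\alpha\notin\upmu_\infty$, it lies on the $1$-dimensional torsion coset $B=\overline{\phi(\Gm)}$ with $\phi\colon\Gm\to\Gm^N$ defined by $\phi(t)=(\zeta_j t^{a_j})_j$; in the degenerate cases $\bfa=\bfzero$ or $\alpha\in\upmu_\infty$, the point $P$ is itself torsion. The goal is to exhibit a nonzero $\bfb\in\Z^N$ with $|\bfb|\le c_W$ such that $P\in H_{\bfb}=V(\bfy^{\bfb}-1)$; expanding $\prod_j(\zeta_j\alpha^{a_j})^{b_j}=1$ then forces $\sum_j a_jb_j=0$ whenever $\alpha\notin\upmu_\infty$.

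If $P$ is itself a torsion point, I would invoke Laurent's toric Manin--Mumford theorem: the Zariski closure of the torsion points of $W$ is a finite union of torsion cosets $T_1,\dots,T_m\subset W$, each cut out by binomials with exponents bounded only in terms of $W$, and any such binomial vanishing at $P$ supplies $\bfb$. Assume now $\alpha\notin\upmu_\infty$ and consider the $1$-dimensional torsion coset $B$. If $B\subset W$, the union of all $1$-dimensional torsion cosets contained in $W$ is itself a torsion subvariety, hence a finite union $U_1\cup\cdots\cup U_l$ of torsion cosets of $\Gm^N$ with defining binomial exponents bounded uniformly in $W$, so $B\subset U_j$ for some $j$ and we read off $\bfb$ from the defining equations of $U_j$. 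The essential case is $B\not\subset W$: since $\dim(B)+\dim(W)<N$, this intersection is \emph{unlikely}, and Bombieri--Zannier's theorem asserts that the locus $\bigcup_{B\not\subset W}(B\cap W)$ taken over all $1$-dimensional torsion cosets $B$ is contained in a proper closed subset $W^\ast\subsetneq W$. Noetherian induction on the irreducible components of $W^\ast$ then yields a uniform bound $c_W$.

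The main obstacle is this last step: showing that unlikely intersections with $1$-dimensional torsion cosets do not Zariski-accumulate in $W$ of codimension $\ge 2$. The standard route combines three ingredients. First, a Weil-height upper bound for rank-$1$ points $P\in W$, obtained by a geometry-of-numbers argument on the exponent lattice defining $B$ after a monomial change of coordinates as in Lemma \ref{lemm:1}. Second, the observation that Galois conjugates of $P$ over $K(\bfzeta)$ produce further points on $W$ lying on related $1$-dimensional cosets, with heights controlled in the same way. Third, an effective B\'ezout-type bound on $\#(W\cap B)$ together with Northcott's theorem, which forces $P$ into a proper closed subvariety of $W$ whenever $B\not\subset W$. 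This is the substantive content of \cite[Theorem~4.1]{BombieriMasserZannier:assta}.
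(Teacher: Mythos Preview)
The paper does not give its own proof of this statement: Theorem~\ref{BZ} is quoted without proof as a known result, attributed to Bombieri and Zannier (appendix to \cite{Schinzel:psrr}) in the refined form of \cite[Theorem~4.1]{BombieriMasserZannier:assta}. There is therefore nothing in the paper to compare your argument against.

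Your outline is a reasonable summary of how the result is established in the literature: the reduction to rank-$\le 1$ points, the disposal of the torsion case via Theorem~\ref{thm:2}, and the identification of the essential difficulty as controlling $B\cap W$ for $1$-dimensional torsion cosets $B\not\subset W$. One small point of presentation: in your third paragraph you invoke ``Bombieri--Zannier's theorem'' to obtain the proper closed subset $W^{\ast}$ and then run Noetherian induction, but that invoked theorem \emph{is} Theorem~\ref{BZ} (equivalently \cite[Theorem~4.1]{BombieriMasserZannier:assta}), so this step is circular as written. Your final paragraph correctly acknowledges this and sketches the actual ingredients (height bounds on rank-$1$ points, Galois orbits, B\'ezout and Northcott), which is closer in spirit to the original argument in the appendix to \cite{Schinzel:psrr}; the version in \cite{BombieriMasserZannier:assta} proceeds instead through their structure theorem for anomalous subvarieties. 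Either way, since the paper treats the result as a black box, a citation suffices here.
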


In the special case when $\zeta_{j}=1$ for all $j$, the existence of the
bounded non-trivial relation $\sum_{j}a_jb_j =0$ was proposed by
Schinzel in \cite[Conjecture, page 3]{Schinzel:rppt}, anticipating the
Zilber conjecture thirty-seven years before its actual formulation!

\begin{remark} \label{rem:2} The constant $c_{W}$ in Theorem \ref{BZ}
  is effectively calculable, as Zannier has pointed out to us in a
  personal communication. This is also explained in the remark
  following \cite[Theorem 1.6]{BombieriMasserZannier:assta}.
  Unfortunately, the computation of a bound for this constant has not
  been accomplished yet.  Obtaining such a bound would enable our
  algorithm to be made effective in the univariate case (Algorithm
  \ref{alg:27}) and to make explicit the dependence of its complexity
  on the height and number of nonzero terms of the input polynomials.
\end{remark}

The following result is a well-known theorem of Laurent giving a
positive answer to the toric case of the Manin-Mumford
conjecture~\cite{Laurent:ede}. Effective proofs of it can be found in
\cite{Schmidt:hps,Leroux:ctpvdlp}.

\begin{theorem} \label{thm:2} Let $W$ be a subvariety of $\Gm^N$. The
  collection of torsion cosets contained in $W$ which are maximal with
  respect to  inclusion is finite. Equivalently, there exists a
  finite collection $\Theta$ of torsion cosets of $\Gm^{N}$ contained
  in $W$ such that, if $B$ is a torsion coset contained in~$W$, then
  there exists $T\in \Theta$ such that $B\subset T$.
\end{theorem}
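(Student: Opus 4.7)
My approach is a Noetherian induction on the subvariety $W$. Since a torsion coset is irreducible, it must lie in a single irreducible component of $W$, so I may reduce to the case where $W$ is irreducible. If $W$ itself is a torsion coset the claim is trivial with $\Theta=\{W\}$, so I assume it is not. My goal is then to exhibit a proper subvariety $W'\subsetneq W$ that contains every torsion coset of $W$; the Noetherian induction hypothesis applied to $W'$ immediately produces the desired finite collection.

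To construct such a $W'$, I would split on the dimension of the stabilizer $H=\{g\in\Gm^N:gW\subseteq W\}$. If $\dim H>0$, its identity component $H^{\circ}$ is a subtorus and, for any torsion coset $B\subseteq W$, the product $H^{\circ}\cdot B$ is a torsion coset in $W$ containing $B$. Hence every maximal torsion coset in $W$ is a union of cosets of $H^{\circ}$, and the problem descends to the image of $W$ in the quotient torus $\Gm^N/H^{\circ}$, of strictly smaller ambient dimension, where induction on $N$ applies.

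The serious case is when $H$ is finite. Here I would exploit the $p$-th power endomorphism $\phi_p\colon\bfy\mapsto\bfy^p$ of $\Gm^N$ together with the Galois action on roots of unity. Enlarging $K$ so that $W$ is defined over $K$, for each integer $m$ and all but finitely many primes~$p$, there is an element of $\Gal(K(\upmu_m)/K)$ sending every $m$-th root of unity to its $p$-th power. Consequently, if $\bfzeta\in W$ is a torsion point of order coprime to such a~$p$, then $\phi_p(\bfzeta)=\sigma(\bfzeta)\in\sigma(W)=W$, i.e.\ $\bfzeta\in W\cap\phi_p^{-1}(W)$. Extending this Galois-equivariance to the binomial description \eqref{eq:17} of torsion cosets, one sees that, outside a finite exceptional list coming from small-order roots of unity, every torsion coset $B\subseteq W$ also lies inside $W\cap\phi_p^{-1}(W)$. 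If this intersection is a proper subvariety of $W$ for some suitable $p$, I take $W'$ to be its union with the exceptions and conclude by induction on $\dim W$.

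The main obstacle is the remaining subcase, in which $\phi_p^{-1}(W)=W$ for every $p$ in an infinite set of primes. One must then show that this strong $p$-power rigidity, combined with $W$ irreducible and $H$ finite, forces $W$ to be a torsion coset, contradicting the standing hypothesis and completing the induction. This is the heart of the Ihara--Serre--Tate style argument exploited by Laurent, and is ultimately where either Baker-type transcendence estimates or, alternatively, the equidistribution of small points (as in the effective proofs of Bilu and of Schmidt cited in the paper) must enter the picture.
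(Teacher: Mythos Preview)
The paper does not prove Theorem~\ref{thm:2}: it is quoted as Laurent's theorem, with references to \cite{Laurent:ede} for the original and \cite{Schmidt:hps,Leroux:ctpvdlp} for effective versions, and no argument is given. So there is no proof in the paper to compare against.

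Your outline follows the Lang--Tate strategy rather than Laurent's subspace-theorem argument, and the skeleton --- Noetherian induction, reduction to $W$ irreducible with finite stabilizer, then a Frobenius-like $\sigma$ acting as the $p$-th power on $\upmu_\infty$ to push torsion cosets into $W\cap\phi_p^{-1}(W)$ --- is a correct and standard route. But you mislocate the difficulty. The ``main obstacle'' you name, that $\phi_p^{-1}(W)=W$ forces $W$ to be a torsion coset, is elementary: for $p>\#\mathrm{Stab}(W)$ the generic fibre of $\phi_p|_W\colon W\to W$ is $\upmu_p^N\cap\mathrm{Stab}(W)=\{1\}$, so $\phi_p^*$ is an automorphism of $\Qb(W)$ sending $y_i\mapsto y_i^p$, and comparing the degree of $\Qb(W)$ over a transcendence basis $\Qb(y_1,\dots,y_d)$ before and after applying this automorphism forces $p^{\dim W}=1$. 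No Baker estimates or equidistribution enter here; those are relevant only for the \emph{effective} bounds in the cited references.

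The step you do gloss over is the phrase ``outside a finite exceptional list coming from small-order roots of unity''. A single prime $p$ only handles torsion cosets whose translating root of unity has order prime to~$p$, and the remaining ones are not finite in number a priori. Making this work requires either combining information from several primes carefully, or reducing first to torsion \emph{points} and treating the $p$-part separately, or abandoning the Galois route and invoking Zhang's inequality for $\Gm^N$ up front. Whichever way you go, this is the place that needs to be written out, not the rigidity lemma.
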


In the context of model theory, Zilber proposed in \cite{Zilber:esesc}
the following conjecture on unlikely intersections of subvarieties of
$\Gm^{N}$ with algebraic subgroups. 

\begin{conjecture}[Zilber conjecture] \label{Zilber_Conj1} Let $W$ be
  a subvariety of $\Gm^N$. There exists a finite collection $\Xi$
  of proper algebraic subgroups of $\Gm^{N}$ such that, if $G$ is a
  proper algebraic subgroup of $\Gm^{N}$ and $Y$ an irreducible
  component of $G\cap W$ such that
  \begin{equation} \label{eq:14}
    \dim(Y) > \dim(G) - \codim(W), 
  \end{equation}
then there exists $H\in \Xi$ such that $Y\subset H$.
\end{conjecture}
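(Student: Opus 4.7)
The plan would be to attack Conjecture~\ref{Zilber_Conj1} by a two-tier analysis separating positive-dimensional anomalous components from the isolated ones, following the strategy initiated by Bombieri, Masser and Zannier. First, I would reduce to intersections with torsion cosets: since a proper algebraic subgroup $G$ of $\Gm^{N}$ is a finite union of translates of a subtorus by torsion points as in~\eqref{eq:20}, the hypothesis~\eqref{eq:14} carries over to each connected component, and it suffices to produce a finite collection of proper torsion cosets containing all anomalous components; enlarging each coset to the smallest algebraic subgroup containing it then yields the desired collection $\Xi$.

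The first tier handles the case $\dim(Y)\geq 1$. Here I would invoke the structure theorem of Bombieri--Masser--Zannier on the geometric anomalous locus $W^{\mathrm{oa}}\subset W$: the union of all positive-dimensional anomalous components is itself a finite union of proper subvarieties of $W$, each contained in a translate of a subtorus of $\Gm^{N}$ of controlled dimension. These translates can be added directly to $\Xi$. The proof of that structural statement proceeds by descending induction on the dimension of the intersecting subgroup, using the classical intersection inequality $\dim(G\cap W)\geq \dim(G)+\dim(W)-N$ together with Ax's theorem on multiplicative independence.

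The second and far more delicate tier treats isolated anomalous points $\bfxi\in W\setminus W^{\mathrm{oa}}$. For these I would invoke Habegger's bounded height theorem, which asserts that the Weil heights of such points are uniformly bounded on $W\setminus W^{\mathrm{oa}}$. The plan is then to combine this bound with a counting argument of Pila--Zannier type, passing through an o-minimal framework to produce enough multiplicative relations to force these points to lie in finitely many proper algebraic subgroups, which would then be added to $\Xi$. This last step is the main obstacle: it has been carried out by Maurin when $W$ is a curve, and by Bombieri--Zannier (Theorem~\ref{BZ}) when the intersecting subgroup has dimension at most~$1$, but no general argument is presently available in higher dimensions. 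Moreover, the effective form demanded by the applications in this paper would additionally require quantitative versions of each ingredient (Habegger's height bound, the counting step, and the structure of $W^{\mathrm{oa}}$), which is precisely why Theorem~\ref{main-s} is stated conditionally on the effective version of the conjecture.
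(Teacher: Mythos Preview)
The statement you are addressing is a \emph{conjecture}, not a theorem: the paper does not prove it and explicitly says so (``It is still unproven, but several interesting cases are already known''). There is therefore no proof in the paper to compare your proposal against; the paper only establishes the equivalence between Conjecture~\ref{Zilber_Conj1} and the non-effective version of Conjecture~\ref{Zilber} (Proposition~\ref{prop:2}), and then uses the conjecture as a hypothesis in Theorem~\ref{main-s}.

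Your proposal is not a proof either, and to your credit you say as much: you correctly isolate the missing ingredient (the second tier, isolated anomalous points outside $W^{\mathrm{oa}}$ in dimension $>1$) and note that ``no general argument is presently available in higher dimensions''. What you have written is an accurate survey of the known partial results and of where the obstruction lies, consistent with the discussion surrounding Theorems~\ref{BZ} and~\ref{thm:2} and the references to Maurin and Habegger in the introduction. But it should not be presented as a proof attempt; it is a status report on an open problem, and that is exactly how the paper treats Conjecture~\ref{Zilber_Conj1}.
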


It is not difficult to see that Theorems \ref{BZ} and  \ref{thm:2} give
this conjecture for the cases when we restrict to algebraic subgroups
$G$ with $\dim(G)=1$ and $\dim(G)=0$, respectively. 

We will use the following reformulation for locally closed subsets of
the Zilber conjecture, which we reinforce by adding the hypothesis that
the collection of torsion cosets can be computed. This is crucial for
our algorithmic applications.

\begin{conjecture} \label{Zilber} Let $W$ be a locally closed subset
  of $\Gm^N$. There exists a finite and  effectively calculable collection
  $\Omega$ of torsion cosets $\Gm^N$ of codimension 1 such that, if
  $B$ is a torsion coset, $C$ an irreducible component of $W$ and
  $Y\subset B\cap C$ an irreducible locally closed subset such that
  \begin{equation}
    \label{eq:6}
    \dim(Y) > \dim(B) - \codim(C), 
  \end{equation}
then there exists $T\in \Omega$ such that $Y\subset T$.
\end{conjecture}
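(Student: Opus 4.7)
The statement is a conjecture, so strictly speaking there is nothing to prove unconditionally; the most one can plausibly do is derive this locally closed reformulation from the effective Zilber conjecture for irreducible subvarieties stated in the introduction (with the effectivity carried through). That is the plan I would follow.

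First I would reduce to the irreducible closed case. Let $W$ be a locally closed subset of $\Gm^N$ and let $\overline{W}$ be its Zariski closure. The irreducible components of $\overline{W}$ are finite in number, and by the description in \S\ref{sec:subv-locally-clos} the elements of $\irr(W)$ are exactly the sets $C=\overline{C}\cap W$ where $\overline{C}$ runs over the irreducible components of $\overline{W}$; in particular $\codim(C)=\codim(\overline{C})$. Standard elimination or Gr\"obner basis machinery lets one compute, from the defining families $\bfF,\bfG$ of $W$, representations of each $\overline{C}$, so the enumeration of $\irr(W)$ is effective.

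Next, to each irreducible subvariety $\overline{C}$ I would apply the effective Zilber conjecture as stated in the introduction. This yields a finite, effectively calculable collection $\Omega_{C}$ of torsion cosets of codimension $1$ such that, for every torsion coset $B$ and every irreducible component $Z$ of $B\cap \overline{C}$ with $\dim(Z)>\dim(B)-\codim(\overline{C})$, some $T\in\Omega_{C}$ contains $Z$. Then I would set
\begin{equation*}
\Omega=\bigcup_{C\in\irr(W)}\Omega_{C},
\end{equation*}
which is finite and effectively calculable.

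To verify that $\Omega$ works, suppose we are given $B$, $C\in\irr(W)$, and an irreducible locally closed $Y\subset B\cap C$ with $\dim(Y)>\dim(B)-\codim(C)$. Passing to the closure, $\overline{Y}$ is an irreducible closed subvariety of $B\cap \overline{C}$ (both $B$ and $\overline{C}$ are closed). Let $Z$ be an irreducible component of $B\cap\overline{C}$ containing $\overline{Y}$. Then
\begin{equation*}
\dim(Z)\geq \dim(\overline{Y})=\dim(Y)>\dim(B)-\codim(C)=\dim(B)-\codim(\overline{C}),
\end{equation*}
so by the effective Zilber conjecture applied to $\overline{C}$ there is $T\in\Omega_{C}\subset\Omega$ with $Z\subset T$, whence $Y\subset\overline{Y}\subset Z\subset T$, as required.

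The main obstacle is of course outside the reach of this reduction: the effective Zilber conjecture itself is open. Within the reduction the only non-trivial points are checking that passing between $C$ and $\overline{C}$ preserves codimension and that irreducible components of $B\cap\overline{C}$ containing $\overline{Y}$ exist with dimension at least $\dim(Y)$; both are immediate. The effectivity of the reformulation rests entirely on the effectivity built into the statement we are assuming.
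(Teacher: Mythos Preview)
Your reduction is correct and follows essentially the same route as the paper's Proposition~\ref{prop:2}: reduce to the closures $\overline{C}$ of the irreducible components of $W$, apply the (effective) Zilber statement to each $\overline{C}$, take the union of the resulting collections, and for the verification pass from $Y$ to an irreducible component $Z$ of $B\cap\overline{C}$ containing $\overline{Y}$. The only cosmetic difference is your starting point: you invoke the introduction's formulation (already phrased with torsion cosets $B$ and torsion cosets of codimension~$1$ in $\Omega$), whereas the paper derives the non-effective version of Conjecture~\ref{Zilber} from Conjecture~\ref{Zilber_Conj1}, which is phrased with algebraic subgroups $G$; this forces the paper to pass from $B$ to $\langle B\rangle$ and then break each $H\in\Xi_j$ into its torsion-coset components, a step you bypass. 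Your remark that primary decomposition of $\overline{W}$ is effective is a useful addition, since the paper's Proposition~\ref{prop:2} only treats the non-effective equivalence.
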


\begin{proposition}
  \label{prop:2}
Conjecture \ref{Zilber_Conj1} is equivalent to the non-effective
version of Conjecture \ref{Zilber}.
\end{proposition}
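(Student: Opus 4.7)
My plan is to prove the two implications separately. The key translation between the two statements goes via two observations: algebraic subgroups of $\Gm^N$ decompose into finitely many torsion cosets (by \eqref{eq:20}) and every proper torsion coset sits inside a codimension-$1$ torsion coset, while the dimension conditions convert one into the other using the trivial inequality $\codim(C)\ge\codim(W)$ valid when $C$ is an irreducible component of $W$.

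For Conjecture \ref{Zilber_Conj1} $\Rightarrow$ non-effective Conjecture \ref{Zilber}, I start with a locally closed subset $W$, decompose $\overline{W}=\bigcup_i\overline{C_i}$ into irreducible components, and apply Conjecture \ref{Zilber_Conj1} to each irreducible subvariety $\overline{C_i}$ to obtain a finite family $\Xi_i$ of proper algebraic subgroups; I set $\Xi=\bigcup_i\Xi_i$. For every $H\in\Xi$ and every connected component $\zeta H_0$ of $H$ (finitely many by \eqref{eq:20}), since $\dim(H_0)\le N-1$ I fix a codimension-$1$ subtorus $H_0'\supset H_0$ and add the codim-$1$ torsion coset $\zeta H_0'$ to a finite collection $\Omega$. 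To check that $\Omega$ works, given $B$, $C$, $Y$ as in \ref{Zilber}, I take $G=\langle B\rangle$, so that $B$ is a connected component of $G$ and $\dim(G)=\dim(B)$, and an irreducible component $Y'$ of $G\cap\overline{C}$ containing $Y$. Then
\begin{displaymath}
\dim(Y')\ge\dim(Y)>\dim(B)-\codim(C)=\dim(G)-\codim(\overline{C}),
\end{displaymath}
so Conjecture \ref{Zilber_Conj1} applied to $\overline{C}$ yields $Y'\subset H$ for some $H\in\Xi_i$. Irreducibility of $Y$ forces it into one connected component $\zeta H_0$ of $H$, and hence into the associated $\zeta H_0'\in\Omega$.

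Conversely, for non-effective Conjecture \ref{Zilber} $\Rightarrow$ \ref{Zilber_Conj1}, I treat a given subvariety $W$ as locally closed and apply Conjecture \ref{Zilber} to obtain a finite family $\Omega$ of codim-$1$ torsion cosets. For each $T\in\Omega$, the subgroup $\langle T\rangle$ is proper (of dimension $N-1$), so $\Xi=\{\langle T\rangle:T\in\Omega\}$ is a finite family of proper algebraic subgroups. Now, given a proper algebraic subgroup $G$ and an irreducible component $Y$ of $G\cap W$ satisfying $\dim(Y)>\dim(G)-\codim(W)$, I pick the connected component $B$ of $G$ containing the irreducible $Y$ and an irreducible component $C$ of $W$ containing $Y$. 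Using $\codim(C)\ge\codim(W)$,
\begin{displaymath}
\dim(Y)>\dim(G)-\codim(W)=\dim(B)-\codim(W)\ge\dim(B)-\codim(C),
\end{displaymath}
which is exactly the hypothesis of Conjecture \ref{Zilber}; its conclusion places $Y$ inside some $T\in\Omega$, whence $Y\subset\langle T\rangle\in\Xi$.

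The only real obstacle is bookkeeping: one must keep finiteness under both translations (algebraic subgroups to codim-$1$ torsion cosets and back) and one must keep the two dimension inequalities oriented correctly. The first point is handled by the structure theory recalled in \S\ref{sec:tors-cosets-homom} (each proper algebraic subgroup has finitely many components, and each proper subtorus lies in some codim-$1$ subtorus), and the second point is precisely the componentwise codimension inequality $\codim(C)\ge\codim(W)$ used above; no genuinely new input from unlikely-intersection theory is needed.
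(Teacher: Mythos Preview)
Your proof is correct and follows essentially the same approach as the paper's. The only cosmetic differences are that where the paper says ``we can assume without loss of generality that each proper algebraic subgroup in $\Xi_j$ has codimension~1'' and then takes irreducible components, you instead explicitly enlarge each connected component $\zeta H_0$ of each $H\in\Xi$ to a codimension-$1$ torsion coset $\zeta H_0'$; and in the reverse direction you spell out the verification that the paper leaves as ``easy to verify''.
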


\begin{proof}
  Suppose that the Zilber conjecture \ref{Zilber_Conj1} holds. Let
  $W\subset \Gm^{N}$ be a locally closed subset and, for each
  irreducible component $C_{j}$ of $W$, let $\Xi_{j}$ the finite
  collection given by this conjecture applied to the subvariety
  $\ov{C_{j}}$. We can assume without loss of generality that each
  proper algebraic subgroup in $\Xi_{j}$ has codimension 1. Consider then
  the finite collection $\Omega$ of torsion cosets of codimension 1
  made of the irreducible components of the algebraic subgroups in the
  collections $\Xi_{j}$.

  Let $B$ be a torsion coset of $\Gm^{N}$ and $\langle B\rangle$ the
  minimal algebraic subgroup containing it. Let
  $C=C_{j_{0}}$ be an irreducible component of $W$ and $Y\subset B\cap
  C$ an irreducible locally closed subset such that \eqref{eq:6}
  holds. Let $Z$ be an irreducible component of $\langle B\rangle\cap \ov C$
  containing $Y$. Then
\begin{displaymath}
  \dim(Z) \ge \dim(Y)> \dim(B) - \codim(C)= \dim(\langle B\rangle) - \codim(\ov C). 
\end{displaymath}
It follows that  there exists $H\in \Xi_{j_{0}}$ such that $Z\subset
H$. Since $Z$ is irreducible, 
there exists $T\in \Omega$ such that $Z\subset T$ and, {\it a
  fortiori}, $Y\subset T$. Hence,  the non-effective version  of
Conjecture \ref{Zilber} holds with this choice of $\Omega$. 

Conversely, suppose that the non-effective version of Conjecture
\ref{Zilber} holds. Let~$W$ be a subvariety of $\Gm^{N}$ and
$\Omega$ the collection of torsion cosets of codimension~1 given by
this conjecture. Then $\Xi=\{ \langle T\rangle \}_{T\in \Omega}$
is a finite collection of  algebraic subgroups of codimension 1, and it is easy to
verify that it satisfies the conditions of the Zilber conjecture~\ref{Zilber_Conj1}.
\end{proof}

\begin{definition} \label{def:3} Let $W$ be a locally closed subset of
  $\Gm^{N}$. An irreducible locally closed subset $Y$ of $W$ is called
  \emph{atypical} if there is a torsion coset $B$ of $\Gm^{N}$ and an
  irreducible component $C$ of $W$ such that $Y\subset B\cap C$ and $
  \dim(Y) > \dim(B) - \codim(C)$.  The \emph{exceptional subset} of
  $W$ is defined as
  \begin{equation*}
W^{\exc}=\bigcup_{Y \text{\rm atypical}} Y.
  \end{equation*}
\end{definition}

\begin{notation}\label{def:4}
  Suppose that Conjecture \ref{Zilber} holds. Given a locally closed
  subset~$W$ of $\Gm^N$, we denote by $\Omega_{W}$ a choice of a
  finite and effectively calculable collection of torsion cosets of
  codimension 1 satisfying the conditions of this conjecture. We also
  write
\begin{displaymath}
  \OmegaW=\bigcup_{T\in \Omega_{W}} T.
\end{displaymath}
\end{notation}

Conjecture \ref{Zilber} implies that the exceptional set of a locally
closed subset $W$ is contained in the torsion subvariety $\OmegaW$. In
particular, if $W$ is not contained in a proper torsion subvariety of
$\Gm^{N}$, then $W^{\exc}$ is a proper subset of
$W$.

\section{The univariate case} \label{dim1}

In this section, we present an algorithm that, given a system of
univariate polynomials with coefficients in the number field $K$, of
degree bounded by $d$ and bounded height and number of nonzero
coefficients, computes a single polynomial defining the same zero set
with $\wt O(\log(d))$ ops. Our approach is inspired by the one
in~\cite{FilasetaGranvilleSchinzel:igcdasp}, but it is simpler and
more geometric.

The following subroutine is one of the main components of the
algorithm. It tests whether a subtorus of codimension 1 contains the
image of a homomorphism and, if this is the case, reduces the
dimension of the problem by intersecting the variety
under consideration with that subtorus.

 \begin{algorithm} 
    \caption{(Reduction of dimension for ideals)} \label{alg:20}
    \begin{algorithmic}[1]
\REQUIRE a homomorphism $\varphi\in
\Hom(\Gm^{n},\Gm^{N})$, a subtorus $T\subset\Gm^{N}$ of codimension 1,
and a family of Laurent polynomials $F_{i}\in K[y_{1}^{\pm1},\dots,
y_{N}^{\pm1}]$, $i=1,\dots, s$.
\ENSURE   either \og $\im(\varphi)\not\subset T$ \fg{}, or a homomorphism $\wt\varphi\in
\Hom(\Gm^{n},\Gm^{N-1})$ and a family of Laurent polynomials $\wt F_{i}\in K[y_{1}^{\pm1},\dots,
y_{N-1}^{\pm1}]$, $i=1,\dots, s$.
\STATE \label{alg:25} Let $A\in \Z^{N\times n}$ be the $N\times
n$-matrix associated to $\varphi$ and $\bfb\in
\Z^{N}$ a primitive vector such that
$T=V(\bfy^{\bfb}-1)$;
\IF{$\bfb A=\bfzero$} \label{alg:26}
\STATE \label{alg:22} choose $\tau\in \Aut(\Gm^{N})$ such that $\tau(T)=
V(y_{N}-1)$ as in Lemma \ref{lemm:1};
\STATE \label{alg:23} let $\iota\colon \Gm^{N-1}\to\Gm^{N}$ be the standard
inclusion identifying $\Gm^{N-1}$ with 
the hyperplane $V(y_{N}-1)$, and $\pi\colon \Gm^{N}
\to\Gm^{N-1}$ the projection onto the first $N-1$ coordinates;
\RETURN \label{alg:24} 
$\wt\varphi\leftarrow \pi\circ \tau \circ \varphi$ and $\wt F_{i}\leftarrow
(\tau^{-1}\circ\iota)^{\#}(F_{i})$, $i=1,\dots, s$;
\ELSE
\RETURN  \og $\im(\varphi)\not\subset T$ \fg{};
\ENDIF
    \end{algorithmic}
  \end{algorithm}

\begin{lemma}
\label{E2c0-i}
For a given input $(\varphi, T, \bfF)$, Algorithm \ref{alg:20} stops
after a finite number of steps. It decides if $\im(\varphi)\subset T$
and, if this is the case, its output $(\wt \varphi, \bfF)$ satisfies
\begin{equation} \label{eq:16}
  \wt \varphi^{\#}(\wt \bfF) =   \varphi^{\#}(\bfF).
\end{equation}
In particular, $\wt \varphi^{-1}(V(\wt \bfF))= \varphi^{-1}(V(\bfF))$. 

If $\size(\varphi)\le d$, then $\size(\wt\varphi)\le O_T(d)$ and each
$\wt F_{i}$ has degree, height and number of nonzero coefficients
bounded by $O_{T,\bfF}(1)$. The complexity of the algorithm is bounded
by $O_{T}(\log (d))+ O_{T,\bfF}(1)$~ops.
\end{lemma}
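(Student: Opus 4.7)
The plan is to treat termination, correctness of the decision, correctness of the output, and the complexity bound in turn. Termination is immediate, since the algorithm performs a bounded sequence of arithmetic operations. For the decision step, writing $\varphi = \varphi_{A}$ with rows $\bfa_{1},\ldots,\bfa_{N}$ of $A$, the definition of $\varphi_{A}$ in Section~\ref{sec:tors-cosets-homom} yields, for every $\bfx\in \Gm^{n}$,
\begin{displaymath}
\varphi(\bfx)^{\bfb} = \prod_{j=1}^{N}(\bfx^{\bfa_{j}})^{b_{j}} = \bfx^{\bfb A}.
\end{displaymath}
Hence $\im(\varphi)\subset T = V(\bfy^{\bfb}-1)$ if and only if $\bfx^{\bfb A}=1$ identically on $\Gm^{n}$, that is, if and only if $\bfb A = \bfzero$; this is precisely the test performed in line~\ref{alg:26}.

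Assume now that $\bfb A = \bfzero$. The key observation is that $\iota$ identifies $\Gm^{N-1}$ with the hyperplane $V(y_{N}-1) = \tau(T)$, and the projection $\pi$ is a left inverse of $\iota$, so that $\iota\circ\pi$ restricts to the identity on $V(y_{N}-1)$. Since $\im(\tau\circ\varphi) \subset \tau(T) = V(y_{N}-1)$, this gives
\begin{displaymath}
(\tau^{-1}\circ\iota)\circ\wt\varphi = \tau^{-1}\circ(\iota\circ\pi)\circ\tau\circ\varphi = \tau^{-1}\circ\tau\circ\varphi = \varphi.
\end{displaymath}
Applying the functoriality identity \eqref{eq:23} then yields
\begin{displaymath}
\wt\varphi^{\#}(\wt F_{i}) = \wt\varphi^{\#}\bigl((\tau^{-1}\circ\iota)^{\#}(F_{i})\bigr) = \bigl((\tau^{-1}\circ\iota)\circ\wt\varphi\bigr)^{\#}(F_{i}) = \varphi^{\#}(F_{i}),
\end{displaymath}
which is \eqref{eq:16}; the equality $\wt\varphi^{-1}(V(\wt\bfF)) = \varphi^{-1}(V(\bfF))$ then follows from~\eqref{eq:19}.

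For the complexity, the vector $\bfb$, the basis completion chosen in Lemma~\ref{lemm:1}, and the automorphism $\tau$ all depend only on $T$ and hence have entries of absolute value $O_{T}(1)$. Computing $\bfb A$ reduces to a bounded number of additions of integers of bit size $O(\log d)$, at a cost of $O_{T}(\log d)$ ops. The matrix of $\wt\varphi$ is obtained by multiplying the fixed $\tau$-matrix with $A$ and discarding the last row; hence $\size(\wt\varphi) \le O_{T}(d)$, and this step also runs in $O_{T}(\log d)$ ops. Finally, the pullback $(\tau^{-1}\circ\iota)^{\#}$ sends each monomial $\bfy^{\bfa}$ in $F_{i}$ to a single monomial in $\bfz$ with exponent of $\ell^{1}$-norm $\le O_{T}(|\bfa|)$; the injectivity of $\tau^{-1}\circ\iota$ prevents monomial collisions, so the number of nonzero terms of $\wt F_{i}$ is at most that of $F_{i}$, while its degree and height are $O_{T,\bfF}(1)$, all obtained in $O_{T,\bfF}(1)$ ops. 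Summing the contributions gives the claimed bound $O_{T}(\log d) + O_{T,\bfF}(1)$.

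The only non-obvious step is the algebraic identity $(\tau^{-1}\circ\iota)\circ\wt\varphi = \varphi$, and specifically the observation that $\iota\circ\pi$ acts as the identity on $V(y_{N}-1)$ precisely because $\im(\tau\circ\varphi)$ lies in that hyperplane; once this is in place, everything else is a routine bookkeeping of matrices and monomial substitutions.
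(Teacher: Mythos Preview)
Your argument is correct and close to the paper's; your derivation of \eqref{eq:16} via the identity $(\tau^{-1}\circ\iota)\circ\wt\varphi = \varphi$ is in fact slightly more direct than the paper's ideal-level computation through the diagram. One small slip: the injectivity of $\tau^{-1}\circ\iota\colon\Gm^{N-1}\to\Gm^{N}$ does \emph{not} prevent monomial collisions, since the induced map on characters goes from $\Z^{N}$ to $\Z^{N-1}$ and hence has nontrivial kernel (the duality runs the other way: injectivity of a torus homomorphism corresponds to surjectivity on characters, not injectivity). Your conclusion that $\wt F_{i}$ has at most as many nonzero terms as $F_{i}$ is nevertheless correct, for the simpler reason that a monomial substitution sends each term of $F_{i}$ to a single monomial, so collisions can only decrease the term count.
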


\begin{proof} Let notation be as in Algorithm \ref{alg:20}. Under the
  correspondence in \eqref{eq:15}, the subtorus $\im(\varphi)$ is
  associated to the kernel of the linear map $A^{\trans}\colon
  \Z^{N}\to \Z^{n}$, for the matrix $A$ in line
  \ref{alg:25} of the algorithm. Then $T=V(\bfy^{\bfb}-1)$ contains
  the image of $\varphi$ if and only if $\bfb\in \ker(A^{\trans})$ or,
  equivalently, if $\bfb A =\bfzero$. Hence, the algorithm decides
  correctly whether this holds. 

To prove \eqref{eq:16}, assume that $\im(\varphi)\subset T$. Consider
the diagram
\begin{displaymath}
  \xymatrix{\Gm^{N} \ar@/^1pc/[r]^{\tau} 
&  \Gm^{N} \ar@/^1pc/[l]_{\tau^{-1}}
\ar@/^1pc/[r]^{\pi}
& \Gm^{N-1} \ar@{=}[d]\ar@/^1pc/[l]_{\iota}\\
    T  \ar@{^{(}->}[u]\ar@/^1pc/[r] &  V(y_{N}-1) \ar@/^1pc/[r]
    \ar@{^{(}->}[u] \ar@/^1pc/[l]& \Gm^{N-1} \ar@/^1pc/[l]}
\end{displaymath}
where the maps in the second line are induced by the ones in the first
line. These maps in the second line are isomorphisms.  Considering
the corresponding maps of $K$-algebras, we deduce that
\begin{displaymath}
  (\bfF)+(\bfy^{\bfb}-1) = (\pi\circ\tau)^{\#}
  ((\tau^{-1}\circ\iota)^{\#}((\bfF)+(\bfy^{\bfb}-1))
=(\pi\circ\tau)^{\#}(\wt \bfF)
\end{displaymath}
because $(\tau^{-1}\circ\iota)^{\#}(\bfy^{\bfb}-1)=
\iota^{\#}(y_{N}-1)=0$. We also have that
$\varphi^{\#}(\bfy^{\bfb}-1)=0$ because $\im(\varphi)\subset T$.
Using the functoriality \eqref{eq:23}, it follows that
\begin{displaymath}
  \varphi^{\#}(\bfF)=
  \varphi^{\#}((\bfF)+(\bfy^{\bfb}-1))=
  \varphi^{\#}((\pi\circ\tau)^{\#}(\wt \bfF))= \wt \varphi^{\#}(\wt \bfF),
\end{displaymath}
as stated. Clearly, this implies that $\wt \varphi^{-1}(V(\wt \bfF))=
\varphi^{-1}(V(\bfF))$.

Now suppose that $\size(\varphi)\le d$.  The automorphism $\tau$ in
line \ref{alg:22} depends only on $T$. Hence, the construction of $\wt
\varphi$ in line \ref{alg:24} implies that $\size(\wt \varphi)\le
O_{T}(d)$.  The construction of each $\wt F_{i}$, also in line
\ref{alg:24}, consists of composing $F_{i}$ with a homomorphism
depending only on $T$. Hence, the number of nonzero coefficients of
$\wt F_{i}$ is bounded by that of $F_{i}$, and its degree and height
are bounded by $O_{T,\bfF}(1)$.

The verification $\bfb A=\bfzero$ in line \ref{alg:26} costs
$O_{T}(\log(d))$ ops using classical multiplication of integers. The
construction of $\tau$ in line~\ref{alg:22} uses $O_{T}(1)$ ops. The
computations in line~\ref{alg:24} of $\wt \varphi$ and $\wt F_{i}$,
$i=1,\dots, s$, take $O_{T}(\log(d))$ ops and $O_{T,\bfF}(1)$ ops,
respectively, using classical multiplication. Hence, the overall
complexity of the algorithm is bounded by $O_{T}(\log (d))+
O_{T,\bfF}(1)$~ops.
\end{proof}

\begin{remark} \label{rem:8}
The Laurent polynomials $\wt F_{i}$, $i=1,\dots, s$, in Algorithm
\ref{alg:20}, line \ref{alg:24}, can be written down, in more explicit terms,
as
\begin{displaymath}
  \wt F_{i}= F_{i}\circ \tau^{-1}(y_{1},\dots, y_{N-1},1) .
\end{displaymath}
\end{remark}

The following procedure computes the non-torsion points in the zero
set of a system of univariate polynomials, and gives the first half of
the algorithm. It is based on the strategy described in (\ref{eq:25}). In
the univariate setting, we apply Theorem~\ref{BZ}
to successively intersect the linear subvariety $W$ until all
non-torsion points lie in a hypersurface (while loop between
lines \ref{alg:39} and \ref{alg:40} in Algorithm \ref{alg:27}
below). Once this is achieved, the hypersurface is described by a
single polynomial that we obtain through a gcd computation, and the
non-torsion points are obtained as the inverse image under a
homomorphism of this hypersurface (lines~\ref{alg:41} and
\ref{alg:42}).

For a Laurent polynomial $p\in K[x^{\pm1}]\setminus \{0\}$, we denote by $\ord(p)$
the maximal integer $m$ such that $x^{-m}p\in K[x]$. 

 \begin{algorithm}     \caption{(Non-torsion points)} \label{alg:27}
    \begin{algorithmic}[1]
\REQUIRE  a family of polynomials $f_{i}\in K[x]$,
$i=1,\dots, s$.
\ENSURE  a polynomial $p_{1}\in K[x]$.
\STATE  Write  $f_i=\sum_{j=1}^N
   \alpha_{i,j}x^{a_j}$ with $\alpha_{i,j}\in K$ and
   $a_{j}\in  \bigcup_{i}\supp(f_{i})$;
\STATE \label{alg:43} set $F_{i}\leftarrow \sum_{j=1}^N \alpha_{i,j}y_j$, $i=1,\dots,
s$ and $\bfF\leftarrow (F_{1},\dots, F_{s})$;
\STATE set $k\leftarrow 0$   and let $\varphi\in
\Hom(\Gm, \Gm^{N})$ be the homomorphism corresponding to the exponents
$a_{j}\in\N$, $j=1,\dots, N$;
\WHILE{$k< N$} \label{alg:39}
\STATE  \label{alg:44} for each irreducible component $W$  of $V(\bfF)$ of codimension $\ge2$, compute
$c_{W}$ as in  Theorem~\ref{BZ} and set
$\Phi\leftarrow \{V(\bfy^{b}-1)\mid \bfb\in \Z^{N} \text{ primitive
  such that } \max_{j} |b_{j}|\le \max_{W}c_{W}\}$;
\WHILE{$\Phi\ne \emptyset$} 
\STATE \label{alg:45} choose $T\in \Phi$ and apply Algorithm \ref{alg:20} to $(\varphi, T, \bfF)$; 
\IF{$\im(\varphi)\subset T$}
\STATE \label{alg:32} set $\bfF\leftarrow \wt \bfF$,
$\varphi\leftarrow \wt \varphi$, $k\leftarrow k+1$, $\Phi\leftarrow
\emptyset$;
\ELSE 
\STATE set $\Phi\leftarrow \Phi\setminus \{T\}$; 
\ENDIF
\ENDWHILE
\ENDWHILE \label{alg:40}
\STATE \label{alg:41} set $p\leftarrow \varphi^{\#}(\gcd(F_{1},\dots, F_{s}))$; 
\RETURN \label{alg:42} $p_{1}\leftarrow x^{-\ord(p)}p$.
    \end{algorithmic}
  \end{algorithm}

  \begin{theorem} \label{thm:4}
Given $f_{i}\in K[x]$,
$i=1,\dots, s$, Algorithm \ref{alg:27} stops after a finite number of
steps and computes  $p_{1}\in K[x]$ such that 
 \begin{displaymath}
    p_{1}|\gcd(f_{1},\dots, f_{s})\quad  \text{ and }\quad  
V\Big(\frac{\gcd(f_{1},\dots, f_{s})}{p_{1}}\Big)\subset \upmu_{\infty}.
  \end{displaymath}

  If $s$ is bounded and each $f_{i}$ has degree bounded by $d$ and
  bounded height and number of nonzero coefficients, then $p_{1}$ has
  degree bounded by $d$, and height and number of nonzero coefficients
  bounded by $O(1)$. The complexity of the algorithm is bounded by
  $O(\log (d))$~ops.
  \end{theorem}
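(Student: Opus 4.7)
My plan is as follows. Algorithm~\ref{alg:27} maintains a pair $(\varphi,\bfF)$, initialized in line~\ref{alg:43} so that $\varphi^\#(F_i)=f_i$, and iteratively replaces it by the output of Algorithm~\ref{alg:20}. By Lemma~\ref{E2c0-i}, the preimage identity
\[
\varphi^{-1}(V(\bfF))= V(f_1,\dots,f_s)\cap\Gm
\]
is preserved throughout, and the ideal $\varphi^\#((\bfF))$ in $K[x^{\pm1}]$ is preserved as well. Termination is clear: $N$ is bounded by the total number of nonzero terms of the $f_i$, each successful reduction increments $k\in\{0,\dots,N\}$, and the inner loop inspects each element of the finite set $\Phi$ at most once between reductions; the outer loop terminates once the inner loop exits without producing a reduction.

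Let $(\varphi,\bfF)$ denote the terminal state, and set $p=\varphi^\#(\gcd\bfF)$ and $p_1=x^{-\ord(p)}p$. For the divisibility $p_1\mid\gcd(f_1,\dots,f_s)$, the preservation of $\varphi^\#((\bfF))$ gives $(f_1,\dots,f_s)=(\varphi^\#(F_j))_j$ in $K[x^{\pm1}]$. Since $\gcd\bfF$ divides each $F_j$ in the Laurent polynomial ring, $p$ divides each generator $\varphi^\#(F_j)$ and hence every element of this ideal; in particular $p\mid f_i$ in $K[x^{\pm1}]$ for all $i$, so $p_1\mid f_i$ in $K[x]$. For the inclusion $V(\gcd(f_1,\dots,f_s))\setminus V(p_1)\subset\upmu_\infty$, let $x_0\in V(\gcd(f_1,\dots,f_s))\setminus\upmu_\infty$, so that $\varphi(x_0)\in V(\bfF)$, and let $W$ be an irreducible component of $V(\bfF)$ containing $\varphi(x_0)$. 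If $\codim(W)\ge 2$, Theorem~\ref{BZ} applied with $\alpha=x_0$ and $\bfzeta=\bfone$ produces $\bfb\in\Z^{N-k}\setminus\{\bfzero\}$ with $|b_j|\le c_W$ and $\bfb\cdot\bfa=0$, where $\bfa$ denotes the exponent vector of the terminal $\varphi$; then $T=V(\bfy^\bfb-1)$ belongs to the final $\Phi$ constructed in line~\ref{alg:44} and satisfies $\im(\varphi)\subset T$, contradicting the fact that the last pass of the inner loop found no reducing $T$. Hence $W$ is a hypersurface $V(h)$ with $h\mid\gcd\bfF$, so $\varphi(x_0)\in V(h)\subset V(\gcd\bfF)$ and $x_0\in V(p)\cap\Gm=V(p_1)\cap\Gm$.

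For the size and complexity bounds, iterating Lemma~\ref{E2c0-i} shows that the degree, height and number of nonzero terms of each $F_i$ remain $O(1)$, while $\size(\varphi)$ grows by a bounded multiplicative factor at each of the at most $N=O(1)$ reduction steps, hence stays $O(d)$. Computing $\gcd\bfF$ in $K[\bfy^{\pm1}]$ with such bounded data costs $O(1)$ ops, and applying $\varphi^\#$ to each of its boundedly many monomials reduces to computing a bounded integer combination of the coordinates of $\bfa$, each of bit-size $O(\log d)$, for a total cost of $O(\log d)$ ops; the normalization $p_1=x^{-\ord(p)}p$ is a mere shift of exponents. I expect the delicate point to be the Bombieri--Zannier contradiction above: one has to match the exponent vector produced by Theorem~\ref{BZ} with the defining data of some $T\in\Phi$ at the last pass of the inner loop, which requires tracking the exponent vector of $\varphi$ through the iterated monomial changes of coordinates performed by Algorithm~\ref{alg:20}.
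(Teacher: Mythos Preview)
Your proof is correct and follows essentially the same route as the paper's: preserve the ideal $\varphi^\#((\bfF))$ through the iterations via Lemma~\ref{E2c0-i} to get the divisibility, and then apply Theorem~\ref{BZ} at the terminal state to force any component of $V(\bfF)$ meeting $\varphi(x_0)$ with $x_0\notin\upmu_\infty$ to have codimension~$1$, hence lie in $V(\gcd\bfF)$. Your direct conclusion $\varphi(x_0)\in V(\gcd\bfF)$ is a mild streamlining of the paper's local-principality argument; the only small wrinkle (shared with the paper) is that the $\bfb$ produced by Theorem~\ref{BZ} need not be primitive, but its primitive part still satisfies the bound and lands in~$\Phi$.
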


  \begin{proof} For $i=0,\dots, s$, the initial value of  $F_{i}$ at line \ref{alg:43} satisfies
    \begin{equation}\label{eq:26}
      \varphi^{\#}(F_{i})=f_{i}.
    \end{equation}
    By Lemma~\ref{E2c0-i}, this holds also for the updated values of
    $F_{i}$ and $\varphi$ in line \ref{alg:32}. Hence, the equality
    \eqref{eq:26} holds for the final value of $F_{i}$ as in line
    \ref{alg:41}.  For the rest of this proof, we denote by  $F_{i}$
    this final value. 

    Set $P=\gcd(F_{1},\dots, F_{s})$. We have that
\begin{math}
 (F_{1},\dots,
    F_{s})\subset (P)\subset
    K[y_{1}^{\pm1},\dots, y_{N-k}^{\pm1}],
\end{math}
The equality
\eqref{eq:26} implies that $(f_{1},\dots, f_{s})\subset (p) \subset
K[x^{\pm1}]$ and so 
\begin{displaymath}
  p_{1}|\gcd(f_{1},\dots,
f_{s}) \quad \text{ in } K[x].
\end{displaymath}

Set $\bff=\{f_{1},\dots, f_{s}\}$ and $\bfF=\{F_{1},\dots, F_{s}\}$
for short. Let $\alpha\in V(\bff)\setminus \upmu_{\infty}$, so that
$\varphi(\alpha)\in V(\bfF) $. 
Let $W$ an irreducible component of $V(\bfF)$ such that
\begin{displaymath}
\varphi(\alpha)=(\alpha^{a_{1}},\dots, \alpha^{a_{N}})\in W.
\end{displaymath}
By Theorem \ref{BZ}, $W$ is necessarily of codimension~1. Otherwise,
there would exist $T\in \Phi$ such that $\im(\varphi)\subset T$ but,
by construction, this is not possible.

This discussion implies that the ideal $(\bfF)\subset
K[y_{1}^{\pm1},\dots, y_{N-k}^{\pm1}]$ becomes principal when
restricted to a suitable neighborhood $U\subset\Gm^{N-k}$ of
$\varphi(V(\bff)\setminus \upmu_{\infty})$. Hence, $(\bfF)=(P)$ on
that neighborhood. We deduce that $\varphi^{-1}(U)$ is a neighborhood
of $V(\bff)\setminus \upmu_{\infty}$ and $(\bff)= \varphi^{\#}(p)$ on
$\varphi^{-1}(U)$.  Thus, $ V({\gcd(f_{1},\dots, f_{s})}/p_{1}) \cap
\varphi^{-1}(U)=\emptyset$ or, equivalently,
\begin{displaymath}
  V\Big(\frac{\gcd(f_{1},\dots,
  f_{s})}{p_{1}}\Big) \subset \upmu_{\infty}. 
\end{displaymath}
This completes the proof of the first part of the statement.

For the second, part, assume that $s$ is bounded and that the
$f_{i}$'s have degree bounded by $d$, and bounded height and number of
nonzero coefficients. The construction of $P=\gcd(F_{1},\dots, F_{s})$
does not depend on the exponents $a_{j}$. Hence, the height and number
of nonzero coefficients of $P$ is bounded, and so this also holds for
$p_{1}$. The fact that $p_{1}$ divides $\gcd(f_{1},\dots, f_{s})$
implies that $\deg(p_{1})\le d$.  The list of linear forms $\bfF$ in
line \ref{alg:43} does not depend on the exponents $a_{j}$ and so,
\emph{a fortiori}, it is independent on the degree of the input
polynomials $f_{i}$. Therefore, the computations in lines \ref{alg:44}
and \ref{alg:41} cost $O(1)$ ops. By Lemma \ref{E2c0-i}, the
computation in line \ref{alg:45} costs $O(\log(d))$ ops. Since the
number of iterations in the while loop between lines \ref{alg:39} and
\ref{alg:40} is bounded, we conclude that the overall complexity of
the algorithm is bounded by $O(\log(d))$, as stated.
  \end{proof}

  The following procedure computes the torsion points in the zero set
  of a system of univariate polynomials, and completes the
  algorithm. It consists in considering the finite collection of
  maximal torsion cosets given by Theorem \ref{thm:2} for the linear
  subvariety~$W$ as in \eqref{eq:25}, and compute with Algorithm
  \ref{alg:11} its inverse image with respect to the homomorphism
  $\varphi$.

 \begin{algorithm}     \caption{(Torsion points)} \label{alg:28}
    \begin{algorithmic}[1]
\REQUIRE  a family of polynomials $f_{j}\in K[x]$,
$j=1,\dots, s$.
\ENSURE  a polynomial $p_{2}\in K[x]$.
\STATE  Write  $f_i=\sum_{j=1}^N
   \alpha_{i,j}x^{a_j}$ with $\alpha_{i,j}\in K$ and
   $a_{j}\in  \bigcup_{i}\supp(f_{i})$;
\STATE set $F_{i}\leftarrow \sum_{j=1}^N \alpha_{i,j}y_j$, $i=1,\dots,
s$;
\STATE \label{alg:48} set $W\leftarrow V(F_{1},\dots, F_{s})\subset
\Gm^{N}$, $\varphi\in
\Hom(\Gm, \Gm^{N})$ the homomorphism corresponding to the exponents
$a_{j}\in \N$, $j=1,\dots,N$, and $\Lambda\leftarrow \emptyset$;
\STATE \label{alg:46} compute the collection $\Theta$ of
maximal torsion cosets of $W$ from Theorem \ref{thm:2};
\FOR{$B\in \Theta$} \label{alg:49}
\STATE apply Algorithm \ref{alg:11} to the pair $(\varphi,B)$;
\IF{$\varphi^{-1}(B)\ne \emptyset$} 
\STATE \label{alg:47} let $\bfE$ be the output of Algorithm \ref{alg:11},
write $\bfE=\{x^{b}-\xi\}$ with $b\in \N$ and
$\xi\in \upmu_{\infty}$, and add the  binomial $x^{b}-\xi$ to $\Lambda$; 
\ENDIF
\ENDFOR \label{alg:50}
\RETURN $p_{2}\leftarrow \prod_{g\in \Lambda} g$.
    \end{algorithmic}
  \end{algorithm}

  \begin{theorem} \label{thm:5} Given $f_{i}\in K[x]$, $i=1,\dots, s$,
    Algorithm \ref{alg:28} stops after a finite number of steps and
    computes $p_{2}\in K[x]$ such that
 \begin{displaymath}
V(p_{2})= 
V(f_{1},\dots, f_{s}) \cap \upmu_{\infty}.
  \end{displaymath}

  If $s$ is bounded and each $f_{i}$ has degree bounded by $d$ and
  bounded height and number of nonzero coefficients, then $p_{2}$ has
  degree bounded by $O(d)$ and height and number of nonzero
  coefficients bounded by $O(1)$. The complexity of the algorithm is
  bounded by $O(\MM(\log(d))\log(\log(d)))=\wt O(\log(d))$ ops.
  \end{theorem}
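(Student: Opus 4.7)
My plan is to establish correctness via identity~\eqref{eq:25} combined with Laurent's theorem (Theorem~\ref{thm:2}), and to read off the size and complexity bounds from the ``$n=1$'' case of Lemma~\ref{torsion-coset}. The central set-theoretic claim I would target is
\begin{displaymath}
V(f_{1},\dots,f_{s}) \cap \upmu_{\infty} = \bigcup_{B \in \Theta}\varphi^{-1}(B),
\end{displaymath}
from which $V(p_{2}) = V(f_{1},\dots,f_{s}) \cap \upmu_{\infty}$ follows at once, since by construction $p_{2}$ is the product of the binomials produced by Algorithm~\ref{alg:11}, each cutting out one nonempty $\varphi^{-1}(B)$.

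For the inclusion ``$\subset$'', I would take $\alpha \in V(f_{1},\dots,f_{s}) \cap \upmu_{\infty}$; then $\alpha \in \Gm$ and by~\eqref{eq:25}, $\varphi(\alpha) \in W$, while $\alpha \in \upmu_{\infty}$ forces $\varphi(\alpha) \in \upmu_{\infty}^{N}$, i.e.\ $\varphi(\alpha)$ is itself a zero-dimensional torsion coset contained in $W$. Theorem~\ref{thm:2} then supplies $B \in \Theta$ with $\varphi(\alpha) \in B$, so $\alpha \in \varphi^{-1}(B)$. For ``$\supset$'', I would invoke the $n=1$ case of Lemma~\ref{torsion-coset}, which guarantees that each nonempty $\varphi^{-1}(B)$ has the form $V(x^{b} - \xi)$ for a single binomial with $\xi \in \upmu_{\infty}$. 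Its roots are automatically roots of unity, and they lie in $V(f_{1},\dots,f_{s})$ because $\varphi^{-1}(B) \subset \varphi^{-1}(W)$.

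For the size and complexity claims, the key observation is that the linear forms $F_{i}$, and hence $W$, depend only on the coefficients of the $f_{i}$, not on the exponents $a_{j}$. Under the standing boundedness hypothesis on heights and numbers of nonzero terms, $N$ is bounded and $W$ ranges over a bounded family of linear subvarieties of $\Gm^{N}$. Consequently $\Theta$ has cardinality $O(1)$, its members are defined by binomials with roots of unity of order $O(1)$, and the computation in line~\ref{alg:46} costs $O(1)$ ops. Applying Lemma~\ref{torsion-coset} with $n=1$, $l=O(1)$ and $\size(\varphi) \le d$, each of the $O(1)$ calls to Algorithm~\ref{alg:11} runs in $\wt{O}(\log d)$ ops and returns a single binomial $x^{b} - \xi$ with $b \le d$ and $\xi$ of bounded order. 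The final product of the $O(1)$ binomials yields $p_{2}$ with $\deg(p_{2}) = O(d)$, bounded height, and $O(1)$ nonzero coefficients (since expansion of $O(1)$ binomials contributes $2^{O(1)} = O(1)$ monomials), computed in $\wt{O}(\log d)$ further ops.

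The delicate point — and the place where I would be most careful — is the dependence of the constants on the input: one must ensure that the \emph{effective} form of Theorem~\ref{thm:2} produces cosets $B$ whose representations have orders and exponent bit-sizes bounded purely in terms of $K$ and $\constant$. This is not a conjectural hypothesis here, since Laurent's theorem in the toric case is effectively known (as recalled in the excerpt), and reduces to the observation that $W$ lives in a bounded family of linear subvarieties determined by the coefficient data alone, independently of the exponents encoding the degree.
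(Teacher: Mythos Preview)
Your approach is essentially identical to the paper's: both establish the identity $V(\bff)\cap\upmu_{\infty}=\bigcup_{B\in\Theta}\varphi^{-1}(B)$, both invoke Lemma~\ref{torsion-coset} with $n=1$ to write each nonempty preimage as $V(x^{b}-\xi)$, and both observe that $W$ and hence $\Theta$ depend only on the coefficient data (not on the exponents), so that $\#\Theta=O(1)$ and the computation of $\Theta$ costs $O(1)$ ops. Your treatment of the set-theoretic identity and of the size/complexity bounds is actually more explicit than the paper's.

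There is, however, one point you have omitted that the paper does address: the claim that $p_{2}\in K[x]$. Each individual binomial $x^{b}-\xi$ has $\xi\in\upmu_{\infty}$, which need not lie in $K$, so $p_{2}$ is \emph{a priori} only in $\Qb[x]$. The paper closes this by noting that the collection $\Theta$ of maximal torsion cosets of $W$ is invariant under the action of $\Gal(\Qb/K)$ (since $W$ is defined over $K$), and that the correspondence $B\mapsto V(\bfE)$ via Algorithm~\ref{alg:11} is Galois-equivariant; hence the set of binomials in $\Lambda$ is Galois-stable and their product $p_{2}$ has coefficients in $K$. You should add this step.
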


  \begin{proof}
Write $\bff=(f_{1},\dots, f_{s})$ for short and let $\Theta$ be as in line \ref{alg:46} of the algorithm. Then
\begin{displaymath}
  V(\bff)\cap \upmu_{\infty}= \bigcup_{B\in
    \Theta}\varphi^{-1}(B).
\end{displaymath}
With notation as in line \ref{alg:47}, we have that $\varphi^{-1}(B)=
V(x^{b}-\xi)$ because of \eqref{eq:3}. It follows that 
$$ 
V(\bff)\cap
\upmu_{\infty}=\bigcup_{g\in \Lambda}V(g)=V(p_{2}).
$$
The collection $\Theta$ is invariant under $K$-automorphisms of $\Qb$ and
so is $p_{2}$. It follows that $p_{2}\in K[x]$, completing the proof
of the first part of the statement.

The construction of $W$ in line \ref{alg:48} and, {\it a fortiori},
that of $\Theta$, do not depend on the exponents $a_{j}$. Hence, the
size of $\Theta$ is bounded by $O(1)$ and its computation costs $O(1)$
ops. Hence, the number of steps in the for loop between lines
\ref{alg:49} and \ref{alg:50} is bounded by $O(1)$. The second part of
the statement then follows easily from Lemma \ref{torsion-coset}.
  \end{proof}

Putting together Theorems \ref{thm:4} and \ref{thm:5}, we obtain the
following more general and precise version of Theorem \ref{thm:1} in
the introduction.

\begin{theorem}
\label{gcd}
Given $f_{i}\in K[x]$, $i=1,\dots, s$,
Algorithms \ref{alg:27} and \ref{alg:28} compute $p_{1},p_{2}\in K[x]$
such that $p_{1}|\gcd(f_{1},\dots, f_{s})$,
 \begin{displaymath}
V(p_{1})\setminus
    \upmu_{\infty}= V(\gcd(f_{1},\dots, f_{s})) \setminus
    \upmu_{\infty} \quad  \text{ and }\quad  
V(p_{2})=V({\gcd(f_{1},\dots, f_{s})}) \cap \upmu_{\infty}.
  \end{displaymath}

  If $s$ is bounded and each $f_{i}$ has degree bounded by $d$,
  bounded height and number of nonzero coefficients, then
  $\deg(p_{1})\le d$ and $\deg(p_{2}) \leq O(d)$, and the height and
  number of nonzero coefficients of both $p_{1}$ and $p_{2}$ are
  bounded by $O(1)$. The complexity of the procedure is bounded by
  $O(\MM(\log(d))\log(\log(d)))=\wt O(\log(d))$ ops.
\end{theorem}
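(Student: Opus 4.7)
The plan is to derive Theorem \ref{gcd} by directly combining the outputs of Theorems \ref{thm:4} and \ref{thm:5}, since the former produces $p_1$ and the latter produces $p_2$, with essentially all of the claims following formally from what has already been established.

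First I would invoke Theorem \ref{thm:4} applied to $f_1,\dots,f_s$ to obtain $p_1\in K[x]$ satisfying $p_1\mid \gcd(f_1,\dots,f_s)$ and $V\bigl(\gcd(f_1,\dots,f_s)/p_1\bigr)\subset \upmu_\infty$. From the divisibility $p_1\mid \gcd$, we have $V(p_1)\subset V(\gcd(f_1,\dots,f_s))$, so the inclusion $V(p_1)\setminus\upmu_\infty \subset V(\gcd(f_1,\dots,f_s))\setminus\upmu_\infty$ is immediate. For the reverse inclusion, let $\alpha\in V(\gcd(f_1,\dots,f_s))\setminus\upmu_\infty$. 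If $\alpha\notin V(p_1)$, then $\alpha$ would be a root of $\gcd(f_1,\dots,f_s)/p_1$, which by the property stated above forces $\alpha\in\upmu_\infty$, a contradiction. Hence $\alpha\in V(p_1)$, and the two sets coincide.

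Next I would apply Theorem \ref{thm:5} to $f_1,\dots,f_s$ to obtain $p_2\in K[x]$ with $V(p_2)=V(f_1,\dots,f_s)\cap\upmu_\infty$. Since $V(f_1,\dots,f_s)=V(\gcd(f_1,\dots,f_s))$ in $\A^1$ (any common zero of the $f_i$'s is a root of their gcd, and conversely), this yields the desired identity $V(p_2)=V(\gcd(f_1,\dots,f_s))\cap\upmu_\infty$.

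For the quantitative part, under the assumption that $s$ is bounded and each $f_i$ has degree bounded by $d$ with bounded height and number of nonzero coefficients, Theorem \ref{thm:4} gives $\deg(p_1)\le d$ and $O(1)$ bounds on the height and number of nonzero coefficients of $p_1$, computed with $O(\log d)$ ops; Theorem \ref{thm:5} gives $\deg(p_2)\le O(d)$ with the same $O(1)$ bounds on height and number of nonzero coefficients, computed with $O(\MM(\log d)\log\log d)=\wt O(\log d)$ ops. Adding these two contributions yields the overall bound $\wt O(\log d)$ ops claimed in the theorem. The main work has already been done in Theorems \ref{thm:4} and \ref{thm:5}, so there is no genuine obstacle at this stage; the only subtlety is the elementary logical juggling in the first paragraph above to pass from the ``quotient lies in roots of unity'' formulation of Theorem \ref{thm:4} to the cleaner ``agreement outside $\upmu_\infty$'' formulation stated in Theorem \ref{gcd}.
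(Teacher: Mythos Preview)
Your proposal is correct and follows exactly the paper's approach: the paper simply states that Theorem~\ref{gcd} is obtained by ``putting together Theorems~\ref{thm:4} and~\ref{thm:5}'' without further detail, and you have supplied precisely the elementary verification needed to pass from the quotient formulation in Theorem~\ref{thm:4} to the set-theoretic equality outside $\upmu_\infty$.
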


\section{The multivariate case}
\label{general}

In this section, we present the procedure for the reduction of
overdetermined systems of multivariate polynomial equations.  We first
give a simple algorithm which allows us to treat the components of top
multiplicative rank.  Strictly speaking, this procedure is not needed
to treat the general case, but it is considerably simpler and serves as a
first approach. After this, we give the main procedure (Algorithm
\ref{alg:5}) and prove Theorem~\ref{main-s} in the introduction.

We will express our algorithms in terms of geometrical objects to
avoid the burden of their syntactical treatment in terms of families
of Laurent polynomials. As before, we denote by $K$ a number field
together with an inclusion into $\Qb$.

\subsection{The weakly transverse case}\label{sec:weakly-transv-case}

The following is a reformulation of Algorithm~\ref{alg:20} in
terms of locally closed subsets.
 \begin{algorithm} 
    \caption{(Reduction of dimension for locally closed subsets)} \label{alg:33}
    \begin{algorithmic}[1]
\REQUIRE 
$(n,N,\varphi,T,W)$ where  $n$ and $N$ are positive integers, $\varphi\in
\Hom(\Gm^{n},\Gm^{N})$ is a homomorphism, $T\subset\Gm^{N}$ is a subtorus of codimension 1,
and  $W\subset \Gm^{N}$ is a locally closed subset.
\ENSURE   either \og $\im(\varphi)\not\subset T$ \fg{}, or a homomorphism $\wt\varphi\in
\Hom(\Gm^{n},\Gm^{N-1})$ and a locally closed subset $\wt W\subset \Gm^{N-1}$.
\STATE Take $\bfF,\bfG\subset \Qb[y_{1}^{\pm1},\dots, y_{N}^{\pm1}]$
such that $W=V(\bfF)\setminus V(\bfG)$;
\STATE apply Algorithm \ref{alg:20} to $(\varphi,T, \bfF)$ and to
$(\varphi,T, \bfG)$ and, 
if $\im(\varphi)\subset T$, set  $(\wt \varphi,\wt \bfF)$ and
$(\wt \varphi,\wt \bfG)$ for its output;
\RETURN $\wt W\leftarrow V(\wt \bfF)\setminus V(\wt \bfG)$.
    \end{algorithmic}
  \end{algorithm}

  \begin{lemma} \label{lemm:3} For a given input $(\varphi, T, W)$,
    Algorithm \ref{alg:33} stops after a finite number of steps. It
    decides whether $\im(\varphi)\subset T$ and, if this is the case, its
    output satisfies
\begin{equation} \label{eq:11}
  \wt \varphi^{-1}(\wt W) =   \varphi^{-1}(W).
\end{equation}

If $\size(\varphi)\le d$, then $\size(\wt\varphi)\le O_T(d)$ and the
Laurent polynomials defining $\wt W$ have degree, height and number of
nonzero coefficients bounded by $O_{T,\bfF}(1)$. The complexity of the
algorithm is bounded by $O_{T,W}(\log (d))$~ops.
  \end{lemma}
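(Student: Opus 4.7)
The plan is to reduce everything to Lemma~\ref{E2c0-i} by exploiting the fact that Algorithm~\ref{alg:33} is essentially a wrapper around two calls to Algorithm~\ref{alg:20}, together with the compatibility of set-theoretic inverse images with the operations that define a locally closed subset. First, I would observe that the test performed in Algorithm~\ref{alg:20} — namely whether $\bfb A=\bfzero$ — depends only on the pair $(\varphi,T)$ and not on the family of Laurent polynomials fed in. Consequently, the two calls in line~2 of Algorithm~\ref{alg:33} agree on the decision $\im(\varphi)\subset T$, and they return the same auxiliary homomorphism $\wt\varphi$. Termination and correctness of the decision follow at once.

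To establish the equality~\eqref{eq:11}, assuming $\im(\varphi)\subset T$, I would apply Lemma~\ref{E2c0-i} to each call separately to obtain $\wt\varphi^{\#}(\wt\bfF)=\varphi^{\#}(\bfF)$ and $\wt\varphi^{\#}(\wt\bfG)=\varphi^{\#}(\bfG)$. Combining these with the general identity~\eqref{eq:19} yields
\begin{displaymath}
\wt\varphi^{-1}(V(\wt\bfF))=\varphi^{-1}(V(\bfF)) \quad \text{and} \quad \wt\varphi^{-1}(V(\wt\bfG))=\varphi^{-1}(V(\bfG)).
\end{displaymath}
Since the preimage under a map commutes with set differences, this gives exactly $\wt\varphi^{-1}(\wt W)=\varphi^{-1}(W)$.

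For the quantitative bounds, I would invoke the quantitative part of Lemma~\ref{E2c0-i} twice, once for each of $\bfF$ and $\bfG$, and combine. The size bound $\size(\wt\varphi)\leq O_T(d)$, the degree, height and nonzero-term bounds on the Laurent polynomials defining $\wt W$, and the overall complexity $O_{T,W}(\log d)$ ops all then follow. The only mild subtlety is that a locally closed subset $W$ admits many presentations $(\bfF,\bfG)$; I would treat the chosen presentation as part of the data implicit in $W$, so that constants depending on $\bfF$ and $\bfG$ may be absorbed into constants depending on $W$. I do not anticipate a real obstacle: this lemma is a bookkeeping reformulation of Lemma~\ref{E2c0-i} at the level of locally closed subsets.
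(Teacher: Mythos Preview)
Your proposal is correct and takes essentially the same approach as the paper, which simply states that the lemma ``follows readily from Lemma~\ref{E2c0-i}.'' You have spelled out precisely the details the paper leaves implicit: the two calls to Algorithm~\ref{alg:20} agree on the decision and on $\wt\varphi$, the ideal equalities from Lemma~\ref{E2c0-i} pass to varieties via~\eqref{eq:19}, and preimages commute with set differences.
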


  \begin{proof}
This follows readily from Lemma \ref{E2c0-i}.
  \end{proof}

  \begin{definition} \label{def:1} Let $X$ be an irreducible locally
    closed subset of $\Gm^{n}$. Following
    Viada~\cite{Viada:icutctspec}, we say that $X$ is {\it weakly
      transverse} if it is not contained in any proper torsion coset
    or, equivalently, if $\rank(X)=n$.
\end{definition}

\begin{remark} \label{rem:4} An irreducible locally closed subset
  $X\subset \Gm^{n}$ is weakly transverse if and only if it is not
  atypical as a subset of itself. If $\Omega_{X}$ denotes a finite
  collection of torsion cosets of $\Gm^{n}$ of codimension 1 as in
  Notation \ref{def:4}, this is equivalent to the condition that
  $X\not\subset \bigcup\Omega_{X}$.
\end{remark}

The following procedure is the natural generalization of Algorithm
\ref{alg:27} to the multivariate case.

 \begin{algorithm}     \caption{(Reduction of weakly transverse components)} \label{alg:200}
    \begin{algorithmic}[1]
\REQUIRE  a subvariety $V\subset \Gm^{n}$  defined by
a linearly independent family of Laurent polynomials in
$\ov \Q[x_{1}^{\pm1},\dots, x_{n}^{\pm1}]$;
\ENSURE  a finite collection $\Gamma$ of locally closed subsets
 of $\Gm^{n}$.
\STATE Let $f_i$, $i=1,\dots, s$, be the Laurent polynomials
defining~$V$ and write $f_i=\sum_{j=1}^N
\alpha_{i,j}\bfx^{\bfa_j}$ with $\alpha_{i,j}\in \Qb$ and
$\bfa_{j}\in \bigcup_{i}\supp(f_{i})$;
\STATE \label{alg:203} set $k\leftarrow 0$, $W\leftarrow
V(\sum_{j=1}^N \alpha_{1,j}y_j, \dots, \sum_{j=1}^N
\alpha_{s,j}y_j)\subset\Gm^N$,  $t\leftarrow 1$,  $W_{1}\leftarrow
W$, and $\varphi\in \Hom(\Gm^{n}, \Gm^{N})$ the
homomorphism corresponding to the exponents $\bfa_{j}\in \Z^{n}$, $j=1,\dots, N$;
\STATE \label{alg:35} compute the collection  $\Omega_{W_{1}}$ of  torsion cosets of
codimension 1  (Notation  \ref{def:4}) and set  
$\Phi\leftarrow \{T\in\Omega_{W_{1}}\mid T
\text{ is a subtorus}\}$;
\WHILE{$\exists\  T\in \Phi$ such that $\im(\varphi)\subset T$} \label{alg:36}
\STATE \label{alg:14} 
apply Algorithm \ref{alg:33} with input $(n, N-k, \varphi, T,
{W})$ and denote by $(\wt \varphi,\wt W)$  its output; 
\STATE \label{alg:37} set
$W\leftarrow \wt W$, $\varphi\leftarrow
\wt \varphi$, $k\leftarrow k+1$;
\STATE \label{alg:15} compute a complete
intersection stratification $(W_{j})_{1\le j\le t}$ of $W$
(Proposition-Definition~\ref{def:2});
\STATE \label{alg:16} compute the collections $\Omega_{W_{j}}$, $j=1,\dots,t$; 
\STATE \label{alg:51} set 
$\Phi\leftarrow  \bigcup_{j=1}^{t}\{T\in\Omega_{W_{j}}\mid T
\text{ is a subtorus}\}$; 
\ENDWHILE \label{alg:202}
\RETURN \label{alg:17}  $\Gamma\leftarrow
\{\varphi^{-1}(W_{j})\}_{1\le j\le t}$.
    \end{algorithmic}
  \end{algorithm}


\begin{theorem}
\label{main-wt}
Assume that Conjecture \ref{Zilber} holds. Given a subvariety
$V\subset\Gm^n$, Algorithm \ref{alg:200} 
stops after a finite number of steps and its output satisfies:
\begin{enumerate}
\item \label{item:12}
\begin{equation*}
V=\bigcup_{Z\in \Gamma} Z;
\end{equation*}

\item \label{item:13} each $Z\in \Gamma$ is given as the zero set of
  $l_{Z}$ Laurent polynomials in the complement of the zero set of a
  further Laurent polynomial. Moreover, if $C$ is an irreducible
  component of $Z$ that is weakly transverse, then $\codim(C)=l_{Z}$.
\end{enumerate}

If $n$ is bounded and $V$ is defined over $K$ by a bounded number of
Laurent polynomials of degree $\leq d$, of bounded height and number
of nonzero coefficients, then the cardinality of $\Gamma$ is bounded
by $O(1)$, the Laurent polynomials defining each $Z\in \Gamma$ have
coefficients in $K$, degree bounded by ${O(d)}$, and height and number
of nonzero coefficients bounded by $O(1)$. The complexity of the
algorithm is bounded by $O(\log (d))$~ops.
\end{theorem}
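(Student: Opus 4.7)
I will tackle termination, the decomposition~\eqref{item:12}, the first half of~\eqref{item:13}, the codimension equality in~\eqref{item:13} (the main obstacle), and the complexity bounds in turn. Each non-exiting pass through the while loop at lines~\ref{alg:36}--\ref{alg:202} invokes Algorithm~\ref{alg:33}, replacing the ambient torus $\Gm^{N-k}$ by $\Gm^{N-k-1}$, so the loop terminates in at most $N$ iterations. Starting from $\varphi^{-1}(W)=V$ by~\eqref{eq:25} and line~\ref{alg:203}, Lemma~\ref{lemm:3} preserves this equality at each reduction; at exit, the complete intersection stratification $W=\bigcup_{j}W_{j}$ of line~\ref{alg:15} gives $V=\bigcup_{j}\varphi^{-1}(W_{j})=\bigcup_{Z\in\Gamma}Z$, establishing~\eqref{item:12}. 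For the first half of~\eqref{item:13}, Proposition-Definition~\ref{def:2} exhibits each $W_{j}$ as $V(F_{j,1},\dots,F_{j,\codim(W_{j})})\setminus V(G_{j})$, so~\eqref{eq:19} presents $Z=\varphi^{-1}(W_{j})$ as the zero set of $l_{Z}:=\codim(W_{j})$ Laurent polynomials $\varphi^{\#}(F_{j,l})$ in the complement of $V(\varphi^{\#}(G_{j}))$.

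The main obstacle is the codimension equality $\codim(C)=l_{Z}$ for a weakly transverse irreducible component $C$ of $Z$. My plan is to fix an irreducible component $Y_{0}$ of $W_{j}$ with $\ov{\varphi(C)}\subset\ov{Y_{0}}$, and let $Y$ be the irreducible component of the locally closed set $Y_{0}\cap\im(\varphi)\subset\im(\varphi)$ containing $\varphi(C)$; by maximality of $C$ inside $\varphi^{-1}(W_{j})$, it is also an irreducible component of $\varphi^{-1}(Y)$. With $m=\dim(\im(\varphi))$, the fiber-dimension identity $\dim(C)=\dim(Y)+(n-m)$ (cf.~\eqref{eq:13} in the proof of Lemma~\ref{lemm:2}) gives
\[
\codim(C)=m-\dim(Y)=\codim_{\im(\varphi)}(Y)\leq l_{Z},
\]
the last inequality coming from $Y\subset Y_{0}\cap\im(\varphi)$ and $\codim_{\Gm^{N-k}}(Y_{0})=l_{Z}$. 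To rule out strict inequality, I would suppose $\dim(Y)>m-l_{Z}$. Since $\im(\varphi)$ is a subtorus of $\Gm^{N-k}$, hence a torsion coset, the triple $(\im(\varphi),Y_{0},Y)$ meets the atypicality hypothesis $\dim(Y)>\dim(\im(\varphi))-\codim(Y_{0})$ of Conjecture~\ref{Zilber} applied to $W_{j}$, so $Y\subset T$ for some codimension-$1$ torsion coset $T\in\Omega_{W_{j}}$.

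The final step is to show $T$ must be a subtorus containing $\im(\varphi)$, contradicting termination. Writing $T=\bfeta H$ with $H$ a subtorus and $\bfeta$ a torsion point, the fact that $\varphi$ is a homomorphism of algebraic groups and $C$ is weakly transverse yields $\langle\varphi(C)\rangle=\varphi(\langle C\rangle)=\im(\varphi)$; combined with $\varphi(C)\subset T$, this forces $\im(\varphi)\subset\langle T\rangle$, and the connectedness of $\im(\varphi)$ places it in the identity component $H$ of $\langle T\rangle$. If $\bfeta\notin H$, then $T\cap H=\emptyset$, whence $\varphi(C)\subset T\cap\im(\varphi)\subset T\cap H=\emptyset$, contradicting $C\neq\emptyset$. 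Therefore $\bfeta\in H$, $T=H$ is a subtorus belonging to $\Phi$ at exit, and $\im(\varphi)\subset T$ contradicts the exit condition of the while loop.

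For the complexity bounds, the hypothesis $N=O(1)$ makes the loop run $O(1)$ times, and Lemma~\ref{lemm:3} keeps the Laurent polynomials defining $W$ of bounded size while $\size(\varphi)\leq O(d)$ throughout. The stratifications of line~\ref{alg:15} and the Zilber collections of line~\ref{alg:16} thus have $O(1)$ cardinality and complexity; each call to Algorithm~\ref{alg:33} costs $O(\log d)$ ops, and each test $\im(\varphi)\subset T$ reduces to checking $\bfb A=\bfzero$ on $O(\log d)$-bit integers, also for $O(\log d)$ ops. Returning $\Gamma$ amounts to applying $\varphi^{\#}$ to the $O(1)$ defining polynomials of each $W_{j}$: a monomial $\bfy^{\bfb}$ with $|\bfb|=O(1)$ maps to $\bfx^{\bfb A}$ with $|\bfb A|\leq|\bfb|\cdot\size(\varphi)=O(d)$ at cost $O(\log d)$. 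Since all auxiliary data may be chosen over $K$, the output polynomials have coefficients in $K$, degree $O(d)$, and bounded height and number of nonzero terms, for a total cost of $O(\log d)$ ops.
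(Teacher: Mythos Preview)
Your proposal is correct and follows essentially the same approach as the paper: preserve $V=\varphi^{-1}(W)$ through the loop via Lemma~\ref{lemm:3}, then argue that $\codim(C)<l_{Z}$ would make $Y\subset\im(\varphi)\cap Y_{0}$ atypical for $W_{j}$, yielding some $T\in\Omega_{W_{j}}$ which must be a subtorus containing $\im(\varphi)$, contradicting the exit condition. The only notable difference is that you spell out in detail why $T$ is a subtorus (via $\langle\varphi(C)\rangle=\im(\varphi)$ and the coset disjointness argument), whereas the paper asserts this in a single sentence.
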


\begin{proof}
  If Conjecture \ref{Zilber} holds, then the computation of the
  collections $\Omega_{W_{j}}$ in lines~\ref{alg:35} and \ref{alg:16}
  can be done and the algorithm makes sense.  At each while loop
  (lines~\ref{alg:36} to~\ref{alg:202}), the value of the variable $k$
  in line \ref{alg:37} increases by one. Hence, this while loop cannot
  be repeated more than $N$ times since when $k=N$, the collection
  $\Phi$ in line \ref{alg:51} is empty. Hence, the algorithm stops
  after a finite number of steps.

  To prove \eqref{item:12}, we first show that, after each while loop, the subvariety
  $W\subset\Gm^{N-k}$ and the homomorphism $\varphi\in
  \Hom(\Gm^{n},\Gm^{N-k})$ satisfy
\begin{equation}
  \label{eq:9}
V=\varphi^{-1}(W).
\end{equation}
By construction, this is clear for the initial values of $W$ and
$\varphi$ in line \ref{alg:203}.  Now suppose that \eqref{eq:9} holds
at the start of the while loop. If this loop is actually executed,
then there exists a subtorus $T\in \Phi$ such that
$\im(\varphi)\subset T$. With notation as in line \ref{alg:14}, by
\eqref{eq:11} it follows that $\varphi^{-1}(W) = \wt \varphi^{-1}(\wt
W) $. We conclude that \eqref{eq:9} also holds for the updated values
of $W$ and $\varphi$ in line \ref{alg:37}.

For the rest of this proof, we denote by $W\subset \Gm^{N-k}$ the
final value of this variable after the last execution of the while
loop, and $(W_{j})_{j}$ its corresponding complete intersection
stratification. Then $W=\bigcup_{j}W_{j}$ and so
\begin{displaymath}
V=\bigcup_{j}\varphi^{-1}(W_{j}) = 
\bigcup_{Z\in \Gamma} Z.
\end{displaymath}

The first part of \eqref{item:13} follows from the definition of a
complete intersection stratification and the construction of $\Gamma$
in line \ref{alg:17}.  We now prove the second part. Let $C$ be an
irreducible component of $Z\in \Gamma$ that is weakly transverse and
suppose that $\codim(C)<l_{Z}$.  Let  $W_{j}$ be a stratum in the
complete intersection stratification of $W$ of
codimension $l_{Z}$ and such that $Z=\varphi^{-1}(W_{j})$. Let $Y$ be an
irreducible component of $W_{j}\cap\im(\varphi)$ such that $C$ is an
irreducible component of $\varphi^{-1}(Y)$.  Applying Lemma
\ref{lemm:2} and the fact that $C$ is weakly transverse, we obtain
that
\begin{equation} \label{eq:27}
  \codim(C)= \rank(C)-\dim(C)=\rank(Y)-\dim(Y).
\end{equation}
The fact that $C$ is weakly transverse also implies that $\langle Y \rangle
= \im(\varphi)$ and so $\rank(Y)= \dim(\im(\varphi))$.  Let $\wt C$ be
an irreducible component of $W_{j}$ containing $Y$, so that
$Y\subset\im(\varphi)\cap \wt C$. From \eqref{eq:27}, we deduce that
\begin{displaymath}
\dim(Y)>\dim(\im(\varphi)) - \codim(\wt C).
\end{displaymath}
Conjecture \ref{Zilber} then implies that there exists $T\in
\Omega_{W_{j}}$ such that $Y\subset T$. It follows that
$\im(\varphi)\subset T$ and that $T$ is a subtorus. But this cannot
happen since, otherwise, the while loop would not been
terminated. We deduce that $\codim(C)\ge l_{Z}$.

On the other hand, since $Z$ is defined by $l_{Z}$ equations in the
complement of a hypersurface, it follows that $\codim(C)= l_{Z}$,
proving the second part of \eqref{item:13}.

The subvariety $W$ in line~\ref{alg:203} does not depend on the
exponents $a_j$ and so, \emph{a fortiori}, is independent on the bound
$d$ for the degrees of the polynomials defining $V$. The bounds for
the size of the output and the complexity of the algorithm then follow
easily from Lemma ~\ref{E2c0-i}.
\end{proof}

With notation and assumptions as in Theorem \ref{main-wt},  denote
by $\Gamma_{\max}$ the subset of~$\Gamma$  consisting of the
locally closed subsets $Z\in \Gamma$ such that $\ov Z$ is maximal with
respect to inclusion.  Clearly,
\begin{equation*}
V=\bigcup_{Z\in \Gamma_{\max}} \ov Z.
\end{equation*}
Hence, given an irreducible component $C$ of $V$ that is weakly
transverse, there exists $Z\in \Gamma_{\max}$ such that $C$ is an
irreducible component of the closure $\ov Z$. By Theorem
\ref{main-wt}\eqref{item:13}, the equations defining $Z$ form a
complete intersection in a suitable neighborhood of~$C$.  This
observation is clear in the case when all the irreducible components
of $V$ are weakly transverse.

\begin{corollary}
  \label{cor:2}
  Let notation and assumptions be as in Theorem \ref{main-wt}.
  Suppose that all the irreducible components of $V$ are weakly
  transverse. Then Algorithm \ref{alg:200} gives every locally closed
  subset $Z\in \Gamma_{\max}$ as a complete intersection outside a
  hypersurface.
\end{corollary}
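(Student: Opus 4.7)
The plan is to reduce the statement to showing that every irreducible component of $Z \in \Gamma_{\max}$ is weakly transverse. Indeed, by Theorem~\ref{main-wt}\eqref{item:13} a weakly transverse irreducible component of $Z$ has codimension $l_Z$; since $Z$ is defined by $l_Z$ Laurent polynomials outside a hypersurface, every component of $Z$ has codimension at most $l_Z$, so if all components are weakly transverse they all have codimension exactly $l_Z$, which is precisely the content of $Z$ being a complete intersection outside a hypersurface.

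Fix $Z \in \Gamma_{\max}$ and an irreducible component $C$ of $Z$. Then $\ov{C}$ is irreducible and contained in $V$, so $\ov{C} \subset D$ for some irreducible component $D$ of $V$, which is weakly transverse by hypothesis. The goal is to prove $\ov{C} = D$: granting this, $C = \ov{C} \cap Z = D \cap Z$ is a nonempty open, hence dense, subset of the irreducible $D$, so $\rank(C) = \rank(D) = n$ and $C$ is weakly transverse. By the discussion preceding the corollary applied to the weakly transverse component $D$, there exists $Z^* \in \Gamma_{\max}$ with $D$ an irreducible component of $\ov{Z^*}$. Moreover, this $Z^*$ is uniquely determined by $D$: the strata $W_j$ from the complete intersection stratification of $W$ are pairwise disjoint, hence so are the $Z_j = \varphi^{-1}(W_j) \in \Gamma$; writing $Z_j = V(\bfF_j) \setminus V(G_j)$, the set $D \cap Z_j = D \setminus V(G_j)$ (when $D \subset \ov{Z_j}$) is either empty or open and dense in the irreducible $D$, and by disjointness of the $Z_j$ at most one index $j(D)$ gives a nonempty such intersection, forcing $Z^* = Z_{j(D)}$.

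The main obstacle is then to show $Z = Z_{j(D)}$. Once this is granted, $D \subset \ov{Z}$ and, since $\ov{C}$ is a maximal irreducible closed subset of $\ov{Z}$ with $\ov{C} \subset D$ and $D$ irreducible, we must have $\ov{C} = D$, as desired. To establish $Z = Z_{j(D)}$, I would argue by contradiction using Lemma~\ref{lemm:2}: if $Z \neq Z_{j(D)}$ then $D \not\subset \ov{Z}$ and hence $\ov{C} \subsetneq D$, and the existence of such a ``small'' irreducible component $C$ of $Z$ would correspond, under the homomorphism $\varphi$, to an atypical intersection of $W_{j(Z)}$ with some torsion coset in $\Gm^{N-k}$. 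The termination condition of the while loop in Algorithm~\ref{alg:200}, which forbids $\im(\varphi)$ from lying in any subtorus of $\Omega_{W_j}$, combined with Conjecture~\ref{Zilber} applied to each stratum $W_j$ and the maximality of $\ov{Z}$ in $\{\ov{Z'} \mid Z' \in \Gamma\}$, would then yield the required contradiction.
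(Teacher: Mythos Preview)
Your reduction in the first paragraph is correct and coincides with the paper's approach: both argue that it suffices to show every irreducible component $C$ of $Z\in\Gamma_{\max}$ is weakly transverse, and both aim to do so by showing that $\ov C$ is itself an irreducible component of $V$ (in your notation, $\ov C = D$). The paper's proof dispatches this in a single sentence, asserting without further argument that a component of $\ov Z$ is a component of $V$; your route through the uniqueness of $Z_{j(D)}$ and the claim $Z=Z_{j(D)}$ is a more careful attempt at the same assertion.

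However, your final paragraph does not close the gap. You correctly flag $Z=Z_{j(D)}$ as the ``main obstacle'' but then offer only a sketch: you neither exhibit the torsion coset $B$ that would witness atypicality nor verify the required inequality $\dim(Y)>\dim(B)-\codim(\wt C)$ for some component $\wt C$ of the relevant stratum $W_{j}$. The tools you invoke --- Lemma~\ref{lemm:2}, the termination condition of the while loop, Conjecture~\ref{Zilber} --- are precisely those used in the proof of Theorem~\ref{main-wt}\eqref{item:13}, but that argument relies on $\rank(C)=n$ to convert the identity $\rank(C)-\dim(C)=\rank(Y)-\dim(Y)$ into a codimension statement and thence into atypicality. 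Since $\rank(C)=n$ is exactly the weak transversality you are trying to establish, invoking the same mechanism here is circular unless you supply a different source for the atypicality (for instance, by working with the weakly transverse $D$ rather than with $C$). You also do not explain how ``maximality of $\ov Z$'' enters the contradiction; as written, it plays no visible role.
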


\begin{proof}
  Let $Z\in \Gamma_{\max}$ and $C$ an irreducible component of $\ov
  Z$. Then $C$ is an irreducible component of $V$, and so it is weakly
  transverse. By Theorem \ref{main-wt}\eqref{item:13}, the equations
  defining $Z$ form a complete intersection in a neighborhood
  of~$C$. Since this holds for all the components of $Z$, it follows
  that this locally closed subset is given as a complete intersection
  outside a hypersurface.
\end{proof}

For instance, if we know {\it a priori} that $\dim(V)=0$ and all
points in $V$ have multiplicatively independent coordinates, then
\begin{displaymath}
V=\bigcup_{Z} Z,
\end{displaymath}
where the union is over the locally closed subsets produced by
Algorithm \ref{alg:200} and given as the zero set of $n$ equations
outside a hypersurface. Each of these locally closed subsets are
either of dimension 0, or the empty set.

\subsection{The general case}\label{sec:general-case}

We devote the rest of this section to the general multivariate
case. We first give a simple subroutine which, given a locally closed
subset $W$ of an algebraic torus and a torsion coset of codimension 1,
computes their intersection as a locally closed subset of an algebraic
torus of lower dimension.

 \begin{algorithm} 
    \caption{(Intersecting with a torsion coset)} \label{alg:1}
    \begin{algorithmic}[1]
\REQUIRE  $N,k\in\N$ such that $N>k$ and a quadruple $(\bfeta,W,T,\delta)$ with $\bfeta\in\upmu_{\infty}^{k}$, $W\subset\Gm^{N-k}$ a locally
closed subset defined over $K(\bfeta)$, $T\subset \Gm^{N-k}$ a torsion
coset of codimension 1, and $\delta\in \Aut(\Gm^N)$.
\ENSURE a triple $(\wt \bfeta,
\wt W,\wt \delta)$ with
$\wt \bfeta\in\upmu_{\infty}^{k+1}$,  $\wt W\subset\Gm^{N-k-1}$ a
locally closed subset defined over $K(\wt \bfeta)$, and  $\wt \delta\in
\Aut(\Gm^N)$.
      \STATE \label{alg:8} Write $T=V(\bfy^{\bfb}-\xi)$
      for a primitive vector
      $\bfb\in \Z^{N-k}$  and $\xi\in \upmu_{\infty}$;
      \STATE \label{alg:21} choose $\tau\in \Aut(\Gm^{N-k})$ such that
      $\tau(T)=V(y_{N-k}-\xi)$ as in Lemma \ref{lemm:1} and
      let $\pi\colon \Gm^{N-k}\to\Gm^{N-k-1}$ be the projection onto
      the first $N-k-1$ coordinates;
      \RETURN \label{alg:2} $\wt \bfeta\leftarrow \xi
      \times\bfeta$, $\wt W\leftarrow \pi(\tau(W)\cap
      V(y_{N-k}-\xi))$ and $\wt \delta\leftarrow (\tau\times \id_{\Gm^{k}}) \circ\delta$;
    \end{algorithmic}
  \end{algorithm}

  \begin{lemma} \label{E1a} For a given input $(\bfeta,W,T,\delta)$,
    the output of Algorithm~\ref{alg:1} satisfies
$$
\delta^{-1}((W\cap T)\times\{\bfeta\})=
\wt \delta^{-1}(\wt W\times\{\wt \bfeta\}).
$$
\end{lemma}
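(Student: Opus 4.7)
The plan is to unwind the definitions in Algorithm \ref{alg:1} and verify the identity by a direct calculation. The crucial point is that the automorphism $\tau$ is chosen by Lemma \ref{lemm:1} precisely so that $\tau(T) = V(y_{N-k}-\xi)$; this hyperplane is naturally identified with $\Gm^{N-k-1}\times\{\xi\}$, and the projection $\pi\colon\Gm^{N-k}\to\Gm^{N-k-1}$ restricts to a bijection on it, inverse to the inclusion $\bfz\mapsto(\bfz,\xi)$.

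First I would rewrite $\wt\delta^{-1}$ using the definition $\wt\delta=(\tau\times\id_{\Gm^{k}})\circ\delta$, which gives
$$
\wt\delta^{-1} = \delta^{-1}\circ(\tau^{-1}\times\id_{\Gm^{k}}).
$$
Thus it suffices to prove
$$
(W\cap T)\times\{\bfeta\} \;=\; (\tau^{-1}\times\id_{\Gm^{k}})\bigl(\wt W\times\{\wt\bfeta\}\bigr),
$$
or, equivalently after applying $(\tau\times\id_{\Gm^{k}})$, that $\tau(W\cap T)\times\{\bfeta\}=\wt W\times\{\wt\bfeta\}$. Since $\wt\bfeta=\xi\times\bfeta$, this reduces to the single identity $\tau(W\cap T)=\wt W\times\{\xi\}$.

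To establish this last identity, I would compute $\tau(W\cap T)=\tau(W)\cap\tau(T)=\tau(W)\cap V(y_{N-k}-\xi)$ and observe that this set is contained in $\Gm^{N-k-1}\times\{\xi\}$. Applying $\pi$ to obtain $\wt W$ and then inserting $\xi$ back in the last coordinate is the identity on any subset of $V(y_{N-k}-\xi)$, so
$$
\wt W\times\{\xi\} \;=\; \pi\bigl(\tau(W)\cap V(y_{N-k}-\xi)\bigr)\times\{\xi\} \;=\; \tau(W)\cap V(y_{N-k}-\xi) \;=\; \tau(W\cap T),
$$
which is precisely what was needed.

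There is no real obstacle here: the statement is essentially a bookkeeping identity that records the fact that Algorithm \ref{alg:1} implements the geometric operation \og intersect with $T$ and move the extra torsion datum $\xi$ from the ambient torus into the tuple $\bfeta$\fg{}, all conjugated by the automorphism $\delta$. The only care needed is to keep track of which factor of the product $\Gm^{N-k-1}\times\Gm\times\Gm^{k}\simeq\Gm^{N}$ the coordinate $\xi$ belongs to after applying $\tau\times\id_{\Gm^{k}}$.
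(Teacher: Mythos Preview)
Your argument is correct and follows essentially the same approach as the paper's proof. The paper simply asserts the key identity $\tau(W\cap T)=\wt W\times\{\xi\}$ and then deduces the result exactly as you do; you have merely spelled out the verification of that identity in detail.
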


\begin{proof}
  With notation as in the algorithm, we verify that $\tau( W\cap T)=
  \wt W\times\{\xi\}$.  We deduce that
\begin{displaymath}
\delta^{-1}((W\cap T)\times\{\bfeta\})=
(\delta^{-1}\circ (\tau\times \id_{\Gm^{k}})^{-1})(\wt W\times\{\wt \bfeta\})=
\wt \delta^{-1}(\wt W\times\{\wt\bfeta\}),
\end{displaymath}
as stated. 
\end{proof} 

\begin{remark} \label{rem:1} 
The locally closed subset $\wt W$ in Algorithm \ref{alg:1}, line \ref{alg:2}, can be
represented as follows. Write $W=V(\bfF)\setminus V(\bfG)$ with
$F_{j},G_{l}\in \Q[y_{1}^{\pm1},\dots, y_{N-k}^{\pm1}]$. Then
$$
\wt W=V(\wt \bfF)\setminus V(\wt \bfG)
$$ 
with $\wt F_{j}= F_{j}\circ \tau^{-1}(y_{1},\dots, y_{N-k-1},\xi) $
and $\wt G_{l}= G_{l}\circ \tau^{-1}(y_{1},\dots, y_{N-k-1},\xi)$.
\end{remark}

In Algorithm \ref{alg:3} below, we apply the previous procedure to a
 subvariety $W_{0}$ of $\Gm^{N}$ and the torsion cosets successively
produced by Conjecture \ref{Zilber}.  We thus obtain a decomposition
of $W_{0}$ as a union of a finite collection of complete intersections
outside hypersurfaces with empty exceptional subset (Definition
\ref{def:3}).

 \begin{algorithm} 
    \caption{(Descent)} \label{alg:3} 
    \begin{algorithmic}[1]
\REQUIRE  a subvariety $W_{0}\subset \Gm^{N}$ defined over $K$.
\ENSURE for $k=0,\dots, N$, a finite collection $\Lambda_k$ of
triples $(\bfeta,W,\delta)$ with
$\bfeta\in\upmu_{\infty}^k$, $W\subset\Gm^{N-k}$ a locally closed subset
and $\delta\in \Aut(\Gm^N)$.
\STATE \label{alg:9} Set $\Sigma_{0}\leftarrow \{(1_{{\Gm^{0}}}, W_{0},
\id_{\Gm^{N}})\}$, where $1_{\Gm^{0}}$ denotes the neutral element of $\Gm^{0}$;
\FOR{$k$ from 0 to $N$} \label{alg:29}
\STATE set $\Lambda_{k}\leftarrow \emptyset$ and $\Sigma_{k+1}\leftarrow \emptyset$;      
\FOR{$(\bfeta,W,\delta)\in \Sigma_{k}$}
\STATE \label{alg:30} compute a complete intersection stratification
      $(W_{j})_{j}$ of $W$ defined over $K$;
\STATE \label{alg:12} compute the collections $\Omega_{W_{j}}$ for all $j$;
\STATE \label{alg:4} if $W_{j} \not\subset \bigcup\Omega_{W_{j}}$, then add $(\bfeta,W_{j}\setminus
\bigcup\Omega_{W_{j}}, \delta)$ to $\Lambda_{k}$; 
\FOR{each $j$ and $T\in \Omega_{W_{j}}$} \label{alg:52}
\STATE \label{alg:10} apply Algorithm \ref{alg:1} to $(\bfeta,W_{j},
T, \delta)$ and add its output to $\Sigma_{k+1}$;
\ENDFOR  \label{alg:53}
\ENDFOR  
\ENDFOR \label{alg:34}
    \end{algorithmic}
  \end{algorithm}

\begin{lemma}
\label{descent}
Assume that Conjecture \ref{Zilber} holds. Given a subvariety
$W_0\subset\Gm^N$ defined over $K$, Algorithm \ref{alg:3} stops after
a finite number of steps and its output has the following
properties:
\begin{enumerate}
\item \label{item:9}
$$
W_0=\bigcup_{k=0}^{N}\bigcup_{(W,\bfeta,\delta)\in\Lambda_k} \delta^{-1}(W\times\{\bfeta\});
$$
\item \label{item:1} the locally closed subset $W$ in a triple
  $(\bfeta,W,\delta)\in \Lambda_{k}$ is given by a collection of
  Laurent polynomials over $K(\bfeta)$ defining a complete
  intersection in the complement of a hypersurface;
\item \label{item:10} if $(\bfeta,W,\delta)\in \Lambda_{k}$ for some
  $k$ and $Z\subset \Gm^{N}$ is an irreducible locally closed subset
  contained in $\delta^{-1}(W\times\{\bfeta\})$, then
\begin{displaymath}
\rank(Z)-\dim(Z)\ge  \codim(W).
\end{displaymath}
\end{enumerate}
\end{lemma}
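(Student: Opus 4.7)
The plan is to establish the three items by induction on the outer loop index $k$, using Lemma \ref{E1a} for the descent step and Conjecture \ref{Zilber} for the rank bound in \eqref{item:10}. Termination is immediate: each set $\Sigma_{k}$ is finite (the stratification in line \ref{alg:30} is finite and so is each $\Omega_{W_{j}}$), and the outer loop runs only up to $k=N$. In the extremal case $k=N$, each locally closed subset lives in $\Gm^{0}$, where no torsion coset of codimension $1$ exists, so $\Omega_{W_{j}}=\emptyset$, every nonempty stratum is transferred to $\Lambda_{N}$, and $\Sigma_{N+1}$ is empty.

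For item \eqref{item:9}, I would establish the loop invariant
$$
W_{0}\ =\ \bigcup_{l=0}^{k-1}\bigcup_{(\bfeta,W,\delta)\in\Lambda_{l}} \delta^{-1}(W\times\{\bfeta\})\ \cup\ \bigcup_{(\bfeta,W,\delta)\in\Sigma_{k}} \delta^{-1}(W\times\{\bfeta\})
$$
by induction on $k$, the base case $k=0$ being line \ref{alg:9}. The inductive step combines the stratification $W=\bigcup_{j}W_{j}$ with the decomposition $W_{j}=(W_{j}\setminus\bigcup\Omega_{W_{j}})\cup\bigcup_{T\in\Omega_{W_{j}}}(W_{j}\cap T)$, and uses Lemma \ref{E1a} to identify each $\delta^{-1}((W_{j}\cap T)\times\{\bfeta\})$ with the triple added to $\Sigma_{k+1}$; the open piece is absorbed into $\Lambda_{k}$. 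Item \eqref{item:1} follows directly from the shape of a complete intersection stratum: $W_{j}=V(F_{1},\ldots,F_{\codim(W_{j})})\setminus V(G)$ by Proposition-Definition \ref{def:2}, and removing the extra torsion cosets $\bigcup\Omega_{W_{j}}$ only enlarges the denominator hypersurface. Rationality over $K(\bfeta)$ would be checked by induction, choosing the stratification and $\Omega_{W_{j}}$ to be $\mathrm{Gal}(\Qb/K(\bfeta))$-invariant so that the corresponding defining polynomials lie in $K(\bfeta)[\bfy^{\pm1}]$.

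The core of the proof is item \eqref{item:10}. For an irreducible locally closed subset $Z\subset \delta^{-1}(W\times\{\bfeta\})$, invariance of rank and dimension under the automorphism $\delta$ allows me to assume $\delta=\id_{\Gm^{N}}$, so $Z=Z'\times\{\bfeta\}$ for an irreducible $Z'\subset W$. Since $\bfeta$ is a tuple of torsion points, $\rank(Z)=\rank(Z')$ and $\dim(Z)=\dim(Z')$, and the problem reduces to $\rank(Z')-\dim(Z')\ge \codim(W)$. Writing $W=W_{j}\setminus\bigcup\Omega_{W_{j}}$, the subset $Z'$ is contained in no $T\in\Omega_{W_{j}}$, so Conjecture \ref{Zilber} applied to $W_{j}$ forces $Z'$ to be non-atypical. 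Taking $B$ to be the connected component of $\langle Z'\rangle$ containing $Z'$ (a torsion coset of dimension $\rank(Z')$) and $C$ an irreducible component of $W_{j}$ containing $Z'$ (of codimension $\codim(W_{j})=\codim(W)$), the non-atypical inequality $\dim(Z')\le \dim(B)-\codim(C)$ rearranges to the desired bound.

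I expect the main obstacles to be, first, the Galois-theoretic bookkeeping in \eqref{item:1} to keep all data rational over $K(\bfeta)$; and second, the short but subtle verification in \eqref{item:10} that the connected component of $\langle Z'\rangle$ containing an irreducible $Z'$ is a torsion coset of dimension exactly $\rank(Z')$, which rests on standard properties of algebraic subgroups of tori.
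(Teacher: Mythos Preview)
Your proposal is correct and follows essentially the same route as the paper's proof: the same loop invariant for \eqref{item:9}, the same appeal to the shape of the stratification for \eqref{item:1}, and the same reduction via $\delta$ to a $Z'\subset W$ followed by the non-atypicality argument for \eqref{item:10}. Your use of the connected component of $\langle Z'\rangle$ as the torsion coset $B$ is exactly the right choice and matches what the paper does implicitly; the paper even states equality $\rank(Z')-\dim(Z')=\codim(W)$, whereas your inequality $\ge$ is all that is needed (and all that actually follows from non-atypicality).
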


\begin{proof} 
  If Conjecture \ref{Zilber} holds, the computation of the collections
  $\Omega_{W_{j}}$ in line \ref{alg:12} of the algorithm can be performed
  and so Algorithm \ref{alg:3} stops after a finite number of steps.

We first  show that, for $k=0,\dots, N+1$,  
\begin{equation} \label{eq:7}
W_{0}= 
\bigg(\bigcup_{l=0}^{k-1}\bigcup_{(\bfeta,W,\delta)\in 
\Lambda_{l}} \delta^{-1}(W\times\{\bfeta\}) \bigg)\cup \bigg(\bigcup_{(\bfeta,W,\delta)\in \Sigma_{k}}
\delta^{-1}(W\times\{\bfeta\})\bigg).
\end{equation}

This is clear for $k=0$, because  of the definition of $\Sigma_{0}$ in
line \ref{alg:9} and the fact that the first union in the right-hand
side is empty. For $k\ge1$, 
the construction of the collections $\Lambda_{k}$
and $\Sigma_{k+1}$ in lines \ref{alg:4} and \ref{alg:10} together with Lemma
\ref{E1a} implies that
\begin{displaymath}
\bigcup_{(\bfeta,W,\delta)\in \Sigma_{k}}
\delta^{-1}(W\times\{\bfeta\})=
\bigg(\bigcup_{(\bfeta,W,\delta)\in 
\Lambda_{k}} \delta^{-1}(W\times\{\bfeta\}) \bigg)\cup \bigg(\bigcup_{(\bfeta,W,\delta)\in \Sigma_{k+1}}
\delta^{-1}(W\times\{\bfeta\})\bigg).
\end{displaymath}
Then \eqref{eq:7} follows from the
inductive hypothesis. For $(\bfeta,W,\delta)\in \Sigma_{N}$, the
collections $\Omega_{W_{j}}$ are empty. Hence,  $\Sigma_{N+1}=\emptyset$. 
The statement \eqref{item:9} then follows from the case
$k=N+1$ of \eqref{eq:7}.

Statement \eqref{item:1} is clear from the construction of $\Lambda_{k}$ in
line  \ref{alg:4}.

To prove \eqref{item:10}, let $Z'$ be the locally closed subset of $W$
such that $Z=\delta^{-1}(Z'\times\{\bfeta\})$. The locally closed
subset $W$ is equidimensional and has empty exceptional subset, since
it is the complement in~$W_{j}$ of the collection
$\Omega_{W_{j}}$. Hence $Z'$ is not atypical (Definition~\ref{def:3}),
which implies that
\begin{equation*}
  \rank(Z')-\dim(Z')=  \codim(W).
\end{equation*}
In turn, this implies \eqref{item:10} since  $\rank(Z)=\rank(Z')$ and
$\dim(Z)=\dim(Z')$.
\end{proof}

\begin{remark} \label{rem:5} The condition $W_{j}\not\subset\bigcup
  \Omega_{W_{j}}$ in line \ref{alg:4} is equivalent to the fact that
  $W_{j}$ has at least one irreducible component that is weakly
  transverse, see Remark \ref{rem:4}.  This test avoids
  adding to the collection $\Lambda_{k}$ a triple with an empty
  locally closed set.
\end{remark}

Algorithm \ref{alg:5} below gives the procedure for the reduction of
overdetermined systems. First, it applies Algorithm \ref{alg:3} to
decompose the linear subvariety $W\subset\Gm^{N}$ into pieces without
exceptional subset. Then, it produces the sought decomposition of the
zero set of the given system of equations as the inverse image of
these pieces with respect to the homomorphism $\varphi$.

 \begin{algorithm}     \caption{(Reduction of overdetermined systems)} \label{alg:5}
    \begin{algorithmic}[1]
\REQUIRE  a subvariety $V\subset\Gm^{n}$ defined over $K$.
\ENSURE a finite collection $\Gamma$ of locally closed subsets $Y\subset\Gm^{n}$ 
defined over a cyclotomic extension of $K$.
\STATE  Let $f_i$,
 $i=1,\dots, s$, be the Laurent polynomials 
   defining~$V$ and write  $f_i=\sum_{j=1}^N
   \alpha_{i,j}\bfx^{\bfa_j}$ with $\alpha_{i,j}\in K$ and
   $\bfa_{j}\in \bigcup_{i}\supp(f_{i})$;
\STATE \label{alg:101} set $W\leftarrow V(\sum_{j=1}^N \alpha_{1,j}y_j, \dots, \sum_{j=1}^N \alpha_{s,j}y_j)\subset\Gm^N$;
\STATE  \label{alg:102} apply Algorithm \ref{alg:3} to $W$ and set
$(\Lambda_{k})_{0\le k\le N}$
for its output;
\STATE let $\varphi\in \Hom(\Gm^{n}, \Gm^{N})$ be the homomorphism
corresponding to the exponents $\bfa_{j}\in\Z^{n}$, $j=1,\dots, N$;
\FOR{$k$ from 0 to $N$}
\STATE  set $\pi_1\colon\Gm^N\rightarrow\Gm^{N-k}$
and $\pi_2\colon\Gm^N\rightarrow\Gm^k$ for the projections onto the first
$N-k$ coordinates and  the last $k$ coordinates,
respectively;
\FOR{$(\bfeta,W,\delta)\in \Lambda_{k}$}  \label{alg:6}
\STATE \label{alg:7} apply Algorithm \ref{alg:11} to the homomorphism
$\pi_2\circ\delta\circ\varphi\colon \Gm^{n}\to \Gm^{k}$ and the torsion
coset~$\{\bfeta\}\subset \Gm^{k}$;
\IF{$\varphi^{-1}(\bfeta)\ne \emptyset$}
\STATE \label{alg:13} let $\bfE=\{p_{j}\}_{1\le j\le t}$ be the output
of Algorithm \ref{alg:11};
\STATE let $F_{j}$, $j=1,\dots, l$, and $G$ be the Laurent
polynomials over $K(\bfeta)$ defining~$W$;
\STATE \label{alg:31} set $p_{t+j}\leftarrow
F_{j}\circ\pi_1\circ\delta\circ\varphi$, $j=1,\dots, l$, and 
$q\leftarrow G\circ\pi_1\circ\delta\circ\varphi$;
\STATE set $Y\leftarrow V(p_{1},\dots, p_{t+l})\setminus V(q)$ and add
$Y$ to $\Gamma$;
\ENDIF
\ENDFOR
\ENDFOR
    \end{algorithmic}
  \end{algorithm}

\begin{theorem}
\label{main}
Assume that Conjecture \ref{Zilber} holds. Given a subvariety
$V\subset\Gm^n$ defined over $K$, Algorithm \ref{alg:5} 
stops after a finite number of steps and its output satisfies:
\begin{enumerate}
\item \label{item:11} 
\begin{equation*}
V=\bigcup_{Y\in \Gamma} Y;
\end{equation*}
\item \label{item:2} each locally closed subset $Y\in \Gamma$ is
  either given as a complete intersection in the complement of a
  hypersurface, or it is the empty set.
\end{enumerate}

If $n$ is bounded and $V$ is defined by a bounded number of Laurent
polynomials of degree $\leq d$, bounded height and number of nonzero
coefficients, then the cardinality of $\Gamma$ is bounded by $O(1)$,
the Laurent polynomials defining each $Y\in \Gamma$ are defined over a
cyclotomic extension of degree $O(1)$, have degree bounded by
$d^{O(1)}$, and height and number of nonzero coefficients bounded by
$O(1)$. The complexity of the algorithm is bounded by
$O(\MM(\log(d))\log(\log(d)))= \wt O(\log (d))$~ops.
\end{theorem}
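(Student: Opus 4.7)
The plan is to combine three ingredients already established: the linearization identity $V\cap\Gm^n=\varphi^{-1}(W)$ from \eqref{eq:25}, the descent of $W$ provided by Algorithm \ref{alg:3} through Lemma \ref{descent}, and the rank-preservation Lemma \ref{lemm:2}. Termination is immediate: Algorithm \ref{alg:3} terminates by Lemma \ref{descent}, and the outer loops of Algorithm \ref{alg:5} range over the resulting finite collections, each iteration invoking Algorithm \ref{alg:11} which terminates by Lemma \ref{torsion-coset}.

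For item \eqref{item:11}, by \eqref{eq:25} and Lemma \ref{descent}\eqref{item:9},
\begin{displaymath}
V=\varphi^{-1}(W)=\bigcup_{k=0}^{N}\bigcup_{(\bfeta,W',\delta)\in\Lambda_{k}}\psi^{-1}(W'\times\{\bfeta\}),
\end{displaymath}
where $\psi=\delta\circ\varphi$. The decomposition $\psi^{-1}(W'\times\{\bfeta\})=(\pi_1\circ\psi)^{-1}(W')\cap(\pi_2\circ\psi)^{-1}(\bfeta)$ together with the definitions of $p_1,\dots,p_{t+l}$ and $q$ in lines \ref{alg:13}--\ref{alg:31} matches this piece precisely to the locally closed subset $Y=V(p_1,\dots,p_{t+l})\setminus V(q)$ added to $\Gamma$: Lemma \ref{torsion-coset} identifies $(\pi_2\circ\psi)^{-1}(\bfeta)$ with $V(p_1,\dots,p_t)$, while the remaining $l$ equations and the hypersurface come from direct substitution into $F_j$ and $G$. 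When $(\pi_2\circ\psi)^{-1}(\bfeta)=\emptyset$, Algorithm \ref{alg:11} detects this and the piece contributes nothing.

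For item \eqref{item:2}, assume that $Y=V(p_1,\dots,p_{t+l})\setminus V(q)$ is nonempty and let $C$ be an irreducible component of $Y$; the aim is to show $\codim(C)=t+l$. Krull's height theorem gives $\codim(C)\leq t+l$, so the content is the matching lower bound. Two observations combine. First, $C\subset(\pi_2\circ\psi)^{-1}(\bfeta)$, and the latter is a torsion subvariety of $\Gm^n$ of pure dimension $n-t$ (the preimage of a torsion point of $\Gm^k$ under a homomorphism of tori); since rank equals dimension on any torsion coset, $\rank(C)\leq n-t$. Second, choosing an irreducible component $\tilde Y$ of $(W'\times\{\bfeta\})\cap\im(\psi)$ containing $\psi(C)$ makes $C$ an irreducible component of $\psi^{-1}(\tilde Y)$, so Lemma \ref{lemm:2} yields $\rank(C)-\dim(C)=\rank(\tilde Y)-\dim(\tilde Y)$; Lemma \ref{descent}\eqref{item:10} applied to $\delta^{-1}(\tilde Y)\subset\delta^{-1}(W'\times\{\bfeta\})$, together with the $\delta$-invariance of rank and dimension, gives $\rank(\tilde Y)-\dim(\tilde Y)\geq l$. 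Combining, $\codim(C)=n-\dim(C)\geq n-\rank(C)+l\geq t+l$, and equality holds.

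For the complexity and size bounds, the key observation is that $W$ in line \ref{alg:101}, and hence the collections $(\Lambda_k)_k$ computed in line \ref{alg:102}, depend only on the coefficients $\alpha_{i,j}$, not on the exponents $\bfa_j$; under the boundedness hypotheses on $n$, $s$, heights and number of nonzero terms, the cardinalities of the $\Lambda_k$, the defining polynomials of each $W'$, and the orders of the torsion points $\bfeta$ are all of size $O(1)$ and computable in $O(1)$ ops. For each triple in $\Lambda_k$, Algorithm \ref{alg:11} runs in $\wt O(\log d)$ ops and produces binomials of degree $d^{O(1)}$ with bounded height (Lemma \ref{torsion-coset}), while the substitutions $F_j\circ\pi_1\circ\delta\circ\varphi$ and $G\circ\pi_1\circ\delta\circ\varphi$ in line \ref{alg:31} are performed by fast exponentiation in $\wt O(\log d)$ ops with output of the stated size. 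The main obstacle in the whole argument is the lower bound on $\codim(C)$ in item \eqref{item:2}: the naive inequality $\rank(C)\leq\dim(\im\psi)$ is too weak, and one must exploit the fact that $(\pi_2\circ\psi)(C)=\{\bfeta\}$ confines $C$ to a torsion coset of $\Gm^n$ of codimension $t$, so that the rank-equals-dimension principle for torsion cosets sharpens the bound to $\rank(C)\leq n-t$ and allows Lemma \ref{lemm:2} to yield the required codimension estimate.
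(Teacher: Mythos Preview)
Your proof is correct and follows essentially the same route as the paper's: the decomposition \eqref{item:11} via Lemma~\ref{descent}\eqref{item:9} and the identity $V=\varphi^{-1}(W)$, and the codimension count \eqref{item:2} via Lemma~\ref{lemm:2} applied to an irreducible component of $(W'\times\{\bfeta\})\cap\im(\delta\circ\varphi)$, Lemma~\ref{descent}\eqref{item:10} for the lower bound $\rank-\dim\ge l$, and the torsion-coset containment for the bound $\rank(C)\le n-t$. Your treatment is in fact slightly more explicit than the paper's in two places: you spell out the $\delta$-invariance of rank and dimension when invoking Lemma~\ref{descent}\eqref{item:10}, and you name Krull's height theorem for the upper bound $\codim(C)\le t+l$, whereas the paper simply asserts both.
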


\begin{proof}
  Let $0\le k\le N$ and $(\bfeta,W,\delta)\in \Lambda_{k}$ be as in line
  \ref{alg:6} of the algorithm. In case $\varphi^{-1}(\bfeta)\ne
  \emptyset$, we denote by $Y$ the locally closed
  subset associated to this triple. By construction,
  \begin{equation}
    \label{eq:4}
    (\delta\circ \varphi)^{-1}(W\times\{\bfeta\})=
\begin{cases}
\emptyset& \text{ if } \varphi^{-1}(\bfeta)=
  \emptyset, \\
Y &\text{ if } \varphi^{-1}(\bfeta)\ne
  \emptyset.
\end{cases}
  \end{equation}
The decomposition in \eqref{item:11} then follows from the
one in Lemma \ref{descent}\eqref{item:9} and the fact that
  $\varphi^{-1}(W)=V$.

  To prove \eqref{item:2}, suppose that $Y$ is nonempty and let $C$ be
  one of its irreducible components. By~\eqref{eq:4}, there is an
  irreducible component $Z$ of $(W\times\{\bfeta\})\cap
  \im(\delta\circ\varphi)$ such that
  $C=(\delta\circ\varphi)^{-1}(Z)$. Applying Lemmas \ref{lemm:2} and
  \ref{descent}\eqref{item:10}, we deduce that
\begin{equation}\label{eq:5}
  \rank(C)-\dim(C) = \rank(Z)-\dim(Z)\ge \codim(W)=l.
\end{equation}
On the other hand, we have that $C\subset
(\delta\circ\varphi)^{-1}(\bfeta)$ and so $\rank(C)\le
n-t$, thanks to Lemma \ref{torsion-coset}. Together with \eqref{eq:5}, this implies that
\begin{displaymath}
\codim(C)= n-\dim(C) \ge t+l.   
\end{displaymath}
Since $Y$ is defined by the $t+l$ Laurent polynomials $p_{1},\dots, p_{t+l}$
outside the hypersurface $V(q)$, it follows that
$\codim(C)=t+l$. Hence, $Y$ is a complete intersection outside $V(q)$,
as stated.

Now assume that both $n$ and $s$ are bounded and that each $f_{i}$ is
of degree $\leq d$, of bounded height and number of nonzero
coefficients.  The variety $W\subset \Gm^{N}$ in line \ref{alg:101}
does not depend on $d$.  Hence, the application of Algorithm
\ref{alg:3} in line \ref{alg:102} produces a output of size $O(1) $
using $O(1)$ ops.  In particular, the collection $\Gamma$ has
cardinality bounded by $O(1)$.  
Lemma \ref{torsion-coset} shows that the
binomials in the collection $\bfE$ in line \ref{alg:13} have
coefficients in $\upmu_{l}$ for $l\le O(1)$, that the Laurent
polynomials defining $Y$ have degree, height and number of nonzero
coefficients as predicted by Theorem~\ref{main}, and that the
complexity of this step is bounded by
$O(\MM(\log(d))\log(\log(d)))$~ops.  From this, we deduce that the
overall complexity is bounded by
$O(\MM(\log(d))\log(\log(d)))=\wt O(\log(d))$ ops.
\end{proof}

\begin{remark} \label{rem:7} For $Y\in \Gamma$, the defining equations
  and inequations come from different sources. In the notation of
  Algorithm \ref{alg:5}, the Laurent polynomials $p_{i}$, $i=1,\dots,
  t$, are binomials with coefficients in $\upmu_{\infty}$, whereas
  $p_{i}$, $i=t+1,\dots, t+l$ come from the equations defining $W$. If
  $Y\ne\emptyset$ and $C$ is an irreducible component of $Y$, we have
  that
\begin{displaymath}
t=n-  \rank(C), \quad l=\rank(C)-\dim(C).
\end{displaymath}
\end{remark}

Theorem \ref{main-s} in the introduction follows by decomposing the
affine space $\A^{n}$ as a disjoint union of tori and applying Theorem
\ref{main} to each of them.

\begin{proof}[Proof of Theorem \ref{main-s}]
Given a subset $I\subset \{1,\dots, n\}$, we consider the
  locally closed subset
  $G_{I}=\{\bfx\in \A^{n}\mid x_{i}\ne 0 \text{ if and only if } i\in
  I\}$. The affine space then decomposes as a disjoint union 
\begin{displaymath}
  \A^{n}=\bigsqcup_{I} G_{I},
\end{displaymath}
and each $G_{I}$ is an algebraic torus $\Gm^{\# I}$ embedded into the
standard linear subspace $V(\{x_{i}\mid i\notin I\})$ of $\A^{n}$.

Given a system of equations over $\A^{n}$, we can split it into
$2^{n}$ systems of equations over these algebraic tori. For each $I$,
we solve the corresponding system of equations by applying Algorithm
\ref{alg:5} and we multiply the obtained Laurent polynomials by
suitable monomials in order to clear all possible
denominators. Finally, we add the set of variables $x_{i}$, $i\notin
I$, to the obtained equations, and we multiply the polynomial defining
the open subset by the monomial $\prod_{i\in I}x_{i}$.

By Theorem \ref{main}\eqref{item:11}, the resulting polynomials form a
collection of systems of equations which define either a complete
intersection in the complement of a hypersurface, or the empty set. By
Theorem \ref{main}\eqref{item:2}, this collection gives a
decomposition of $V(f_{1},\dots, f_{s})$ as in \eqref{eq:8}. The rest
of the statement follows also from Theorem \ref{main}.
\end{proof}

\begin{remark} \label{rem:3} In practice, there are a number of
  modifications that can be applied to our general procedure. They do
  not affect the theoretical complexity of the algorithms, but can
  significantly simplify the computations in concrete examples.

\begin{enumerate}
\item \label{orto} For a given system of equations, it is better to
  apply Algorithm~\ref{alg:33} several times, starting with a subtorus
  $T\subset\Gm^{N}$ of codimension 1 of small degree and the linear
  subvariety $W\subset \Gm^{N}$. If it turns that $\im(\varphi)\subset
  T$, then this procedure reduces the dimension of the ambient space
  without breaking $W$ into several pieces, as might happen when
  applying the descent in Algorithm~\ref{alg:5}.

\item \label{item:15} Both in Algorithms \ref{alg:200} and
  \ref{alg:5}, one can replace the linear subvariety  $W\subset
  \Gm^{N}$ by the subvariety of $\Gm^{N-1}$ given by
\begin{displaymath}
   V\bigg(\alpha_{1,1}+ \sum_{j=2}^N \alpha_{1,j}y_j, \dots, \alpha_{s,1}+ \sum_{j=2}^N \alpha_{s,j}y_j\bigg)
\end{displaymath}
and $\varphi$ by the homomorphism associated to the vectors
$\bfa_{j}-\bfa_{1}$, $j=2,\dots, N$. In this way, computations start
in a space of dimension $N-1$ instead of one of dimension $N$.

\item \label{item:16} The locally closed subsets $Y$ of $\Gm^{n}$
  produced by Algorithm \ref{alg:5} have codimension bounded by $n$.
  Hence, in line \ref{alg:6} of this algorithm, it suffices to consider
  only the triples $(\bfeta,W,\delta)$ such that $\codim(W)\le n$.
  Consequently, in line \ref{alg:30} of Algorithm \ref{alg:3}, it
  suffices to compute only the components $W_{j}$ in the complete
  intersection stratification of $W$ of codimension bounded by $n$.  A
  similar remark  applies to Algorithm \ref{alg:200} for the weakly
  transverse case. 
\end{enumerate}
\end{remark}

\section{Examples} \label{examples}

We illustrate with two examples how our algorithms work.  We will
systematically use the modifications in Remark \ref{rem:3}. To shorten
the presentation, we will only compute the zeros of these systems in
the algebraic torus.

\begin{example} \label{exm:1}
Let   $d\ge1$ and consider the system of polynomials
\begin{equation}\label{eq:28}
f_1=x_{1}^dx_{2}^2-5x_{1}^dx_{2}-2x_{2}+10,\quad 
f_2=x_{1}^{d+1}x_{2}-2x_{1}^dx_{2}-2x_{1}+4 \in\Q[x_{1},x_{2}].
\end{equation}
Its zero set in $\Gm^{2}$ consists of the curve defined by the
polynomial $x_{1}^dx_{2}-2$ (which is a common factor of $f_1$ and
$f_2$) and the isolated point $(2,5)$. In the sequel, we describe how
our algorithms give this result.

The support of these polynomials consists of  the vectors
$(d+1,1), (d,2), (d,1),$ $ (0,1), (1,0), (0,0) \in\Z^{2}$.   
Let $W \subset\Gm^5$ be  the subvariety  defined by
$$
F_1=y_2-5y_3-2y_4+10, \quad F_2=y_1-2y_3-2y_5+4\in \Q[y_{1}^{\pm1},
\dots, y_{5}^{\pm1}]
$$
and $\varphi\colon\Gm^2\rightarrow\Gm^5$ the homomorphism given by
\begin{displaymath}
\varphi(x_{1},x_{2})=(x_{1}^{d+1}x_{2},x_{1}^dx_{2}^2,x_{1}^dx_{2},x_{2},x_{1}),   
\end{displaymath}
so that
\begin{math}
V(f_{1},f_{2})=\varphi^{-1}(V(F_{1},F_{2})).
\end{math}

The subtorus $T=V(y_2y_3^{-1}y_4^{-1}-1)$ satisfies $\im(\varphi)
\subset T$. Following Remark \ref{rem:3}\eqref{orto}, we apply
Algorithm \ref{alg:33} at this point, instead of the descent procedure
in Algorithm~\ref{alg:3}.  We choose the automorphism $\tau\in
\Aut(\Gm^{5})$ given by
$$
\tau(y_1,y_2,y_3,y_4,y_5)=(y_1,y_3,y_4,y_5,y_2y_3^{-1}y_4^{-1}).
$$ 
It satisfies $\tau(T)=V(y_5-1)$. The corresponding output of Algorithm
\ref{alg:33} for the triple $(W,T,\varphi)$ is the subvariety $\wt
W\subset \Gm^{4}$ defined by
\begin{align*}
\wt F_{1}&=F_1(y_1,y_2y_3,y_2,y_3,y_4)=
y_2y_3-5y_2-2y_3+10,  \\
\wt F_{2}&=F_2(y_1,y_2y_3,y_2,y_3,y_4)=y_1-2y_2-2y_4+4, 
\end{align*}
and the homomorphism $\wt\varphi\colon \Gm^{2}\to \Gm^{4}$ given by
$\wt\varphi(x_{1},x_{2})=(x_{1}^{d+1}x_{2},x_{1}^dx_{2},x_{2},x_{1})$. 

Set $(W,\varphi)\leftarrow (\wt W,\wt \varphi)$. We apply again
Algorithm \ref{alg:33}, this time to the triple $(W,T,\varphi)$ with
$T=V(y_1y_2^{-1}y_4^{-1}-1)$. This subtorus satisfies
$\im(\varphi)\subset T$. We choose $\tau\in \Aut(\Gm^{4})$ given by
$\tau(y_1,y_2,y_3,y_4)=(y_2,y_3,y_4,y_1y_2^{-1}y_4^{-1})$, which
satisfies $\tau(T)=V(y_4-1)$.  The corresponding output of Algorithm
\ref{alg:33} is the subvariety $\wt W\subset \Gm^{3}$ defined by
\begin{align*}
\wt F_{1}&= F_1(y_1y_3,y_1,y_2,y_3)=y_1y_2-5y_1-2y_2+10 ,\\ 
\wt F_{2}&= F_2(y_1y_3,y_1,y_2,y_3) =y_1y_3-2y_1-2y_3+4,
\end{align*}
and the homomorphism $\wt \varphi\colon \Gm^{2}\to \Gm^{3}$ given by
$\wt \varphi(x_{1},x_{2})=(x_{1}^dx_{2},x_{2},x_{1})$.

Set again $(W,\varphi)\leftarrow (\wt W,\wt \varphi)$.  There is
no proper subtorus of $\Gm^{3}$ of degree independent of $d$ and
containing the image of $\varphi$. Hence, we cannot further apply
Remark~\ref{rem:3}\eqref{orto}, at least when $d\gg 0$.  Instead, we
apply the  procedure in Algorithm~\ref{alg:5}.

The subvariety $W$ has two irreducible
components, of codimension $1$ and~$2$, respectively. Indeed,
$$
W=V(y_1-2)\cup V(y_2-5,y_3-2).
$$ 
Following line \ref{alg:102} of Algorithm \ref{alg:5}, we apply
Algorithm \ref{alg:3} to this subvariety in order to construct the
collections $\Lambda_{k}$. As in line \ref{alg:9} of this algorithm,
we set $\Sigma_{0}=\{(1_{\Gm^{0}}, W, \id_{\Gm^{3}})\}$, where
$1_{\Gm^{0}}$ denotes the neutral element of the trivial group
$\Gm^{0}$.  We now describe what is done in the loop between lines
\ref{alg:29} and \ref{alg:34}.

Set $k=0$. There is only one element in $\Sigma_{0}$, namely
$(1_{\Gm^{0}}, W, \id_{\Gm^{3}})$. A complete intersection
stratification of $W$ is given by
\begin{displaymath}
W_1=V(y_1-2)\backslash V(y_3-2), \quad
W_2=V((y_1-2)(y_2-5),y_3-2).
\end{displaymath}
It is easy to check that $W_1$ has empty exceptional subset and that
the only maximal atypical subvariety of $W_{2}$ is given by $W_2\cap
T=V(y_{1}-2,y_{3}-2)$ for the subtorus $T=V(y_1y_3^{-1}-1)$. Hence, we
may choose $\Omega_{W_{1}}=\emptyset$ and $\Omega_{W_2}=\{T\}$ in line
\ref{alg:12} and we add to $\Lambda_{0}$ the triples $(1_{\Gm^{0}},
W_{1}, \id_{\Gm^{3}})$ and $(1_{\Gm^{0}}, W_{2}\setminus T,
1_{\Gm^{3}})$ in line \ref{alg:4}. Let us consider now the loop
between lines \ref{alg:52} and \ref{alg:53}. Since $\Omega_{W_2}$
consists of the only torsion coset $\{T\}$, we apply Algorithm
\ref{alg:1} to the quadruple $(1_{\Gm^{0}}, W_{2}, T,
\id_{\Gm^{3}})$. We choose $\tau\in \Aut(\Gm^{3})$ given by
$\tau(y_{1},y_{2},y_{3})=(y_1,y_2,y_1^{-1}y_3)$. Thus
$\tau(T)=V(y_3-1)$ as required, and
$$
\tau(W_2\cap T)=\wt W\times\{1\}, 
$$ 
with $\wt W= V(y_1-2)\subset\Gm^2$. Hence, Algorithm \ref{alg:1} gives
$(1, \wt W,\tau)$ as output. We add this element to $\Sigma_{1}$ in
line \ref{alg:10}. 

Set now $k=1$. The only element of $\Sigma_{1}$ is $(1, \wt W,\tau)$
and $\wt W=V(y_1-2)$ has no atypical locally closed subset. Thus, we
 choose $\Omega_{\wt W}=\emptyset$ and  add $(1, \wt W,\tau)$ to
$\Lambda_{1}$ in line \ref{alg:4}. The construction of Algorithm
\ref{alg:3} ends up here.

We now construct the collection $\Gamma$ in Algorithm \ref{alg:5}.  Recall
that $\varphi\colon \Gm^{2}\to\Gm^{3}$ is the homomorphism given by
$$
\varphi(x_{1},x_{2})=(x_{1}^dx_{2},x_{2},x_{1}).
$$  
The elements of
$\Lambda_0$ are $(1_{\Gm^{0}}, W_{1}, \id_{\Gm^{3}})$ and
$(1_{\Gm^{0}}, W_{2}\setminus T, \id_{\Gm^{3}})$.  
These triples 
contribute to $\Gamma$ with the locally closed subsets
\begin{align*}
Y_{1}&=V(x_{1}^dx_{2}-2)\backslash V(x_{1}-2),\\
Y_{2}&=V((x_{1}^dx_{2}-2)(x_{2}-5),x_{1}-2)\backslash
V(x_{1}^{d-1}x_{2}-1)=\{(2,5)\}. 
\end{align*}  
The only triple in $\Lambda_1$ is $(1, \wt W,\tau)$, and it
contributes to $\Gamma$ with the locally closed subset
\begin{displaymath}
  Y_{3}=V(x_{1}^dx_{2}-2,x_{1}^{d-1}x_{2}-1)=\{(2,2^{1-d})\}.
\end{displaymath}
By Theorem~\ref{main}, the zero set of the system \eqref{eq:28}
decomposes as $Y_{1}\cup Y_{2} \cup Y_{3}$.
\end{example}

\begin{example}
  \label{exm:2}
Let $d\ge1$ and consider the system of polynomials 
\begin{multline}
  \label{eq:29}
f_1=x_{1}^{3d+1}x_{2}^{3d}+x_{1}^2x_{2}+5,
f_2=x_{1}^{3d+2}x_{2}^{3d}+5x_{1}+25, \\
f_3=x_{1}+x_{1}^2x_{2}+25x_{2} \in \Q[x_{1},x_{2}].  
\end{multline}
Its zero set in $\Gm^{2}$ consists of two points $(5\zeta,\zeta/5)$ of
rank $1$, with $\zeta$ a primitive third root of unity. As in the
previous example, we will describe how our algorithms give this
result.

The support of $f_{1}$ and $f_{2}$ consists of  the vectors
$ (0,0),
({3d+1},{3d}), ({3d+2}, {3d})$, $(2,1), (1,0), (0,1)\in \Z^{2}$.   
Let $W\subset \Gm^{5}$ be the subvariety defined by 
$$
F_1=y_1+y_3+5,\quad F_2=y_2+5y_4+25,\quad F_3=y_4+y_3+25y_5
$$
and $\varphi\colon\Gm^2\rightarrow\Gm^5$  the homomorphism given by
$$
\varphi(x_{1},x_{2})=(x_{1}^{3d+1}x_{2}^{3d},x_{1}^{3d+2}x_{2}^{3d},x_{1}^2x_{2},x_{1},x_{2}).
$$

The subtorus $T=V(y_1y_2^{-1}y_4-1)$ satisfies $\im(\varphi) \subset
T$. Following Remark \ref{rem:3}\eqref{orto}, we apply Algorithm
\ref{alg:33} to the triple $(W,T,\varphi)$. We choose $\tau\in
\Aut(\Gm^{5})$ given by
$\tau(y_1,y_2,y_3,y_4,y_5)=(y_2,y_1y_2^{-1},y_3,y_5,y_1y_2^{-1}y_4)$
as one of the automorphisms that satisfies $\tau(T)=V(y_5-1)$. The corresponding
output of this algorithm is the subvariety $\wt W\subset\Gm^{4}$
defined by $\wt F_{i}=F_i(y_1y_2,y_1,y_3,y_2^{-1},y_4)$, $i=1,2,3$,
that is
\begin{equation*}
\wt F_{1}= y_1y_2+y_3+5,\quad 
\wt F_{2}=y_1+5y_2^{-1}+25, \quad
\wt F_{3}=y_2^{-1}+y_3+25y_4,
\end{equation*}
and the homomorphism $\wt \varphi\colon \Gm^{2}\to \Gm^{4}$ given by
$\wt \varphi(x_{1},x_{2})=
(x_{1}^{3d+2}x_{2}^{3d},x_{1}^{-1},x_{1}^2x_{2},x_{2})$.

Set $(W,\varphi)\leftarrow (\wt W,\wt \varphi)$. We apply again
Algorithm \ref{alg:33}, this time to the triple $(W,T,\varphi)$ with
$T=V(y_2^2 y_3y_4^{-1}-1)$. This subtorus satisfies
$\im(\varphi)\subset T$. We choose $\tau\in\Aut(\Gm^{4})$ given by
$\tau(y_1,y_2,y_3,y_4)=(y_1,y_2^{-2}y_4,y_2,y_2^2 y_3y_4^{-1})$, which
satisfies $\tau(T)=V(y_4-1)$.
The corresponding output of
Algorithm \ref{alg:33} is subvariety
$\wt W\subset \Gm^{3}$ defined by $\wt F_{i} =
F_i(y_1,y_3,y_2,y_2y_3^2)$, $i=1,2,3$, that is
\begin{equation*}
\wt F_{1} =y_1y_3+y_2+5,\quad
\wt F_{2} =y_1+5y_3^{-1}+25,\quad
\wt F_{3} =y_3^{-1}+y_2+25y_2y_3^2, 
\end{equation*}
and the homomorphism $\wt \varphi\colon \Gm^{2}\to\Gm^{3}$ given by
$\wt \varphi(x_{1},x_{2})=(x_{1}^{3d+2}x_{2}^{3d},x_{1}^2x_{2},x_{1}^{-1})$.

Set again $(W,\varphi)\leftarrow (\wt W,\wt \varphi)$.  We apply for a
third time Algorithm \ref{alg:33}, this time to the triple
$(W,T,\varphi)$ with $T=V(y_1y_3^2-1)$. This subtorus satisfies
$\im(\varphi)\subset T$. We choose $\tau\in \Aut(\Gm^{3})$ given by
$\tau(y_1,y_2,y_3)=(y_1,y_3,y_1y_3^{2})$, which satisfies
$\tau(T)=V(y_3-1)$.   The subvariety $\wt W\subset \Gm^{2}$ 
in the output of Algorithm
\ref{alg:33} is defined by
\begin{equation*}
\wt F_{1}=F_1(y_2^{-2},y_1,y_2)=y_2^{-1}+y_1+5,\quad \wt F_{2}= F_2(y_2^{-2},y_1,y_2) = y_2^{-2}+5y_2^{-1}+25,
\end{equation*}
because  the Laurent polynomial $\wt F_{3}=
F_3(y_2^{-2},y_1,y_2) =y_2^{-1}+y_1+25y_1y_2^2$  
lies in the ideal $(\wt F_{1},\wt F_{2})$. 
The corresponding homomorphism $\wt \varphi\colon \Gm^{2}\to \Gm^{2}$
is given by $\wt \varphi(x_{1},x_{2})=(x_{1}^2x_{2},x_{1}^{-1},x_{1}^{3d}x_{2}^{3d})$.

Again set $(W,\varphi)\leftarrow (\wt W,\wt \varphi)$. There exists no
proper subtorus of $\Gm^{2}$ of degree independent of $d$ such that
$\im(\varphi)\subset T$. Thus we cannot further apply the
Algorithm~\ref{alg:33} when $d\gg 0$. Instead, we  apply the general
procedure in Algorithm
\ref{alg:5}.  As indicated in Algorithm \ref{alg:5}, line
\ref{alg:102}, we apply Algorithm \ref{alg:3} to the subvariety
$W\subset\Gm^{2}$ to construct the sets $\Lambda_k$.  We describe what
is done in the loop in lines \ref{alg:29} to \ref{alg:34} of this
algorithm.

Set $k=0$ and choose at line \ref{alg:9} the only element of
$\Sigma_{0}$, namely $(1_{\Gm^{0}}, W, \id_{\Gm^{2}})$.  The
subvariety $W$ consists of the two points $(5\zeta,\zeta/5)$ with
$\zeta$ a primitive third root of unity. In particular, it is
0-dimensional and is already given as a complete intersection.
Moreover, these two points are atypical since
\begin{displaymath}
  W\cap T_{\zeta}= \{(5\zeta,\zeta/5)\}
\end{displaymath}
where $T_\zeta$ is the torsion coset $V(y_1y_2-\zeta^2)$ for $\zeta\in
\upmu_{3}\setminus \{1\}$. Hence we may choose
\begin{math}
  \Omega_{W}=\{T_{\zeta}\}_{\zeta\in \upmu_{3}\setminus \{1\}}
\end{math} in line \ref{alg:12}.
We have that $W\subset \OmegaW$ and so $\Lambda_{0}=\emptyset$. 

Now fix $\zeta\in \upmu_{3}\setminus \{1\}$ and apply Algorithm
\ref{alg:1} to the quadruple $(1_{\Gm^{0}}, W, T_{\zeta},
\id_{\Gm^{2}})$.  We choose $\tau\in \Aut(\Gm^{2})$ given by
$\tau(y_1,y_2)=(y_1,y_1y_2)$. This automorphism satisfies
$\tau(T_\zeta)=V(y_2-\zeta^2)$ as required, and
$$
\tau(W\cap T_\zeta)=\wt W_\zeta\times\{\zeta^2\}
$$ 
with $\wt W_\zeta=V(y_1-5\zeta)=\{5\zeta\}\subset\Gm$.  The
 output of Algorithm \ref{alg:1} is the triple
$(\zeta^{2},\wt W_{\zeta},\tau)$, which we add to $\Sigma_{1}$. 
Hence, at the end of the loop between  lines
\ref{alg:52} and \ref{alg:53},
\begin{displaymath}
  \Sigma_{1}=\{(\zeta^{2},\wt W_{\zeta},\tau)\}_{\zeta\in \upmu_{3}\setminus \{1\}}.
\end{displaymath}
Set now $k=1$ in the loop between lines \ref{alg:29} and
\ref{alg:34}. The subvariety $\wt W_{\zeta}$ has no atypical component
because it is 0-dimensional and contains no torsion point. Thus we
choose $\Omega_{W_{\zeta}}=\emptyset$ and add to $\Lambda_{1}$ the two
elements $(\zeta^{2},\wt W_{\zeta},\tau)$ for $\zeta\in
\upmu_{3}\setminus \{1\}$. The construction of Algorithm \ref{alg:3}
finishes here.

We now construct the collection $\Gamma$ in Algorithm \ref{alg:5}. Recall that
$\varphi\colon \Gm^{2}\to \Gm^{2}$ is given by
\begin{displaymath}
 \varphi(x_{1},x_{2})=(x_{1}^2x_{2},x_{1}^{-1},x_{1}^{3d}x_{2}^{3d}).   
\end{displaymath}
The collection $\Lambda_{1}$ contributes to $\Gamma$ with the locally
closed subsets 
\begin{displaymath}
  Y_{\zeta}=V(x_{1}^2x_{2}-5\zeta, x_{1}x_{2}-\zeta^2)\setminus V(1) =
\Big  \{\Big(5\zeta,\frac{\zeta}{5}\Big)\Big\} \quad \text{ for }
\zeta\in
  \upmu_{3}\setminus \{1\}.
\end{displaymath}
By Theorem~\ref{main},  the zero set of the system \eqref{eq:29}
decomposes as the union of these two points. 
\end{example}


\newcommand{\noopsort}[1]{} \newcommand{\printfirst}[2]{#1}
  \newcommand{\singleletter}[1]{#1} \newcommand{\switchargs}[2]{#2#1}
  \def\cprime{$'$}
\providecommand{\bysame}{\leavevmode\hbox to3em{\hrulefill}\thinspace}
\providecommand{\MR}{\relax\ifhmode\unskip\space\fi MR }
\providecommand{\MRhref}[2]{%
  \href{http://www.ams.org/mathscinet-getitem?mr=#1}{#2}
}
\providecommand{\href}[2]{#2}

\end{document}